\newtheorem{theorem}{Theorem}[section]
\newtheorem{lemma}[theorem]{Lemma}
\newtheorem{remark}[theorem]{Remark}
\newtheorem{corollary}[theorem]{Corollary}
\newtheorem{assumption}[theorem]{Assumption}
\numberwithin{theorem}{section}
\numberwithin{equation}{section}
\numberwithin{figure}{section}
\numberwithin{table}{section}
\let\isout\sout \renewcommand{\sout}[1]{\ifmmode\text{\isout{\ensuremath{#1}}}\else\isout{#1}\fi}
\colorlet{inlinkcolor}{green!50!black}
\colorlet{exlinkcolor}{red!50!black}
\newcommand{\wideBar}[1]{\overbracket[0.3pt][0pt]{\overbracket[0.3pt][0pt]{\mkern-2.8mu #1}}}
\newcommand{\abs}[1]{\left\vert #1 \right\vert}
\newcommand{\norm}[1]{\left\Vert #1\right\Vert}
\newcommand{\nnorm}[1]{\| #1 \|}
\newcommand{\He}[1]{\left\Vert{\hskip -2.7pt}\left\vert #1 \right\vert{\hskip -2.7pt}\right\Vert}
\newcommand{\he}[1]{\big\vert\kern-0.25ex\big\vert\kern-0.25ex\big\vert #1 \big\vert\kern-0.25ex\big\vert\kern-0.25ex\big\vert}
\newcommand{\LtD}[2]{\left\Vert #1\right\Vert_{0,#2}}
\newcommand{\Ltd}[2]{\big\Vert #1\big\Vert_{0,#2}}
\newcommand{\Lt}[1]{\left\Vert #1\right\Vert_{0}}
\newcommand{\Ho}[1]{\left\Vert #1\right\Vert_{1}}
\newcommand{\HoD}[2]{\left\Vert #1\right\Vert_{1,#2}}
\newcommand{\sHoD}[2]{\left\vert #1\right\vert_{1,#2}}
\newcommand{\Ht}[1]{\left\Vert #1\right\Vert_{2}}
\newcommand{\sHtD}[2]{\left\vert #1\right\vert_{2,#2}}
\newcommand{\sHt}[1]{\left\vert #1\right\vert_{2}}
\newcommand{\ine}[2]{\langle #1,#2 \rangle_e}
\newcommand{\Linf}[1]{\norm{#1}_{L^{\infty}(\Omz)}}
\newcommand{\jump}[1]{[\![#1]\!]}
\newcommand{\R}{\mathbb{R}}
\newcommand{\T}{\mathcal{T}}
\newcommand{\mG}{\mathcal{G}}
\newcommand{\M}{\mathcal{M}}
\newcommand{\E}{\mathcal{E}}
\newcommand{\osc}{{\rm osc}}
\newcommand{\pihi}{\Pi_{hi}}
\newcommand{\phu}{P_hu}
\newcommand{\mbi}{{\rm\mathbf i}}
\newcommand{\pT}{{\partial T}}
\newcommand{\Om}{\Omega}
\newcommand{\na}{\nabla}
\newcommand{\GaD}{\Gamma_{\rm Dir}}
\newcommand{\GaI}{\Gamma_{\rm imp}}
\newcommand{\pa}{\partial}
\newcommand{\wt}{\widetilde}
\newcommand{\HoG}{H^1_{\GaD}(\Om)}
\newcommand{\ls}{\lesssim}
\newcommand{\gs}{\gtrsim}
\newcommand{\oneo}{{\bf 1}_{\Omz}}
\newcommand{\Mfg}{M(f,g)}
\newcommand{\MFg}{M(F,g)}
\newcommand{\hMfg}{\widehat{M}(f,g)}
\newcommand{\hMFg}{\widehat{M}(F,g)}
\newcommand{\vep}{\varepsilon}
\newcommand{\Omz}{\Omega_{0}}
\newcommand{\lnhmin}{\abs{\ln h_{\T,{\rm min}}}^{\frac12}}
\newcommand{\lnhminn}{\abs{\ln h_{\T,{\rm min}}}}
\newcommand{\uinc}{u_{\rm inc}}
\newcommand{\eq}[1]{\begin{align}#1\end{align}}
\newcommand{\eqn}[1]{\begin{align*}#1\end{align*}}
\newcommand*{\DP}[2]{\left\langle{#1},{#2}\right\rangle} % Duality product <*,*>
\newcommand{\qaq}{\quad\mbox{and}\quad}
\title{Adaptive Finite Element Method for a Nonlinear Helmholtz Equation with High Wave Number}
\newcommand{\email}[1]{\protect\href{mailto:#1}{#1}}
\author{  Run Jiang\thanks{Department of Mathematics, Nanjing University, Jiangsu, 210093, People's Republic of China.  
This work of these two authors was partially supported by the NSF of China under grants 12171238 and 12261160361.
(\email{dz1821002@smail.nju.edu.cn}, \email{hjw@nju.edu.cn}). }
\and Haijun Wu\footnotemark[1] \and Yifeng Xu\thanks{Department of Mathematics and Scientific Computing Key Laboratory of Shanghai Universities, Shanghai Normal
University, Shanghai 200234, China. 
 The work of this author was partially supported by National Natural Science Foundation of China (Projects 12250013, 12261160361 and 12271367) and Science and Technology Commission of Shanghai Municipality (Projects 17ZR1420800, 20JC1413800 and 22ZR1445400). 
(\email{yfxu@shnu.edu.cn}).
}
\and Jun Zou\thanks{Department of Mathematics, The Chinese University of Hong Kong, Shatin, N.T., Hong Kong, P. R. China.
The work of JZ was substantially supported by Hong Kong RGC General Research Fund (projects 14308322 and 14306719) and NSFC/Hong
Kong RGC Joint Research Scheme 2022/23 (project N\_CUHK465/22).
(\email{zou@math.cuhk.edu.hk}).
}
}
\date{}  %%date
\begin{document}
\maketitle

%% abstract
\begin{abstract}

  A nonlinear Helmholtz (NLH) equation with high frequencies and corner singularities is discretized by the linear finite element method (FEM). After deriving some wave-number-explicit stability estimates and the singularity decomposition for the NLH problem,  {\it a priori} stability and  error estimates are established for the FEM on shape regular meshes including the case of locally refined meshes.
 Then {\it a posteriori} upper and lower bounds using a new residual-type error estimator, which is equivalent to the standard one, are derived for 
%  \green{the FE solutions to the NLH problem. These {\it a posteriori} estimates have confirmed 
% a significant fact that is also valid for the NLH problem, namely 
% the residual-type estimator seriously underestimates the error of the FE solution 
% in the preasymptotic regime,
% which was first observed by Babuška et al. [Int J
% Numer Methods Eng 40 (1997)] for a one-dimensional linear problem. Based on the new {\it a posteriori} error estimator,  
% both the convergence and the quasi-optimality of the resulting adaptive finite element algorithm are proved the first time 
% for the NLH problem, when the initial mesh size lying in the preasymptotic regime.}
the FE solutions to the NLH problem. These {\it a posteriori} estimates have confirmed 
a significant fact that is also valid for the NLH problem, namely 
the residual-type estimator seriously underestimates the error of the FE solution 
in the preasymptotic regime,
which was first observed by Babuška et al. [Int J
Numer Methods Eng 40 (1997)] for a one-dimensional linear problem. Based on the new {\it a posteriori} error estimator,  
both the convergence and the quasi-optimality of the resulting adaptive finite element algorithm are proved the first time 
for the NLH problem, when the initial mesh size lying in the preasymptotic regime. 
Finally,  numerical examples are presented to validate the theoretical findings and demonstrate that applying the continuous interior penalty (CIP) technique with appropriate penalty parameters can reduce the pollution errors efficiently. 
 In particular, the nonlinear phenomenon of optical bistability with Gaussian incident waves is successfully simulated by the adaptive CIPFEM. 
 % In particular, the adaptive CIPFEM successfully simulates the nonlinear phenomenon of optical bistability with Gaussian incident waves.
\end{abstract}

%% Key words
{\bf Key words.}
 Adaptive algorithm, {\it a posteriori} error estimate, nonlinear Helmholtz equation, high wave number, Newton's method, preasymptotic error estimate.

{\bf AMS subject classifications. }
65N12, %Stability and convergence of numerical methods
65N15, %Error bounds
65N30, %finite elements, Rayleigh-Ritz and Galerkin methods, finite methods
78A40  %Wave and radiation

%%%%%%%%%%%%%% Documents %%%%%%%%%%%%
\section{Introduction}\label{sec:Introduction}
In this work, we are mainly concerned with the construction and analysis of an effective adaptive finite element method 
for a nonlinear Helmholtz (NLH) problem of the form:
\begin{alignat}{2}
-\Delta u - k^2(1+\vep\oneo\abs{u}^2)u &= f \quad \text{in } \Om, \label{eq:Helm}\\
\frac{\partial u}{\partial n} + \mbi k u &= g \quad \text{on } \GaI, \label{eq:Imp}\\
u&=0 \quad \text{on } \GaD\,. \label{eq:Dir}
\end{alignat}
This is a popular model for the wave scattering in nonlinear optical media for the challenging physical scenarios, 
i.e., high frequencies and large incident wave energy \cite{Boyd:2008,Wu:Zou:2018}. 
In the model, $u$ stands for the total wave, $f\in L^2(\Om)$ is the source, 
$\Om\subset\R^2$ is a polygonal physical domain. The cubic nonlinear term in the model corresponds 
to the nonlinear medium involved, the Kerr medium here, which occupies the subdomain $\Om_0$, sitting 
inside the physical domain $\Om$. We are particularly interested in the high wave number case and the Kerr medium,  
%with $\oneo$ being the characteristic function of $\Omz$.
%where some nonlinear medium, say, the Kerr medium, is sitting inside and occupying a domain Ω0.
i.e., $k\gg 1$ and the Kerr constant $\vep\ll 1$. 
The boundary $\pa \Om$ is divided into two parts, i.e., the impedance part $\GaI$ and the Dirichlet part $\GaD$, 
see the dashed and solid lines respectively in figure \ref{fig:region}.
\eqref{eq:Imp} is called an impedance boundary condition, which is usually regarded as the lowest order absorbing boundary condition.
\eqref{eq:Dir} is the homogeneous Dirichlet boundary condition which generally represents the perfectly conducting boundary. 
We assume $g\in \wt H^{\frac12}(\GaI)$, it means the extension of $g$ by zero outside $\GaI$ belongs to $H^{\frac12}(\pa \Omega)$.

\begin{figure}[tbp]
  \begin{minipage}[t]{0.47\textwidth}
  \centering
  \begin{tikzpicture}
    \draw [dashed] (0,0)--(4,0)--(4,3)--(0,3)--(0,0);
    \draw (2,2)--(2.8,2.7)--(3.6,2.6)--(3.8,2.2)--(3.4,1.5)--(2.6,1.5)--(2,2);
    \draw [fill=cyan!20] (0.25,1) to [out=90,in=180] (0.75,1.5)
    to [out=0,in=180] (2,1.2) to [out=0,in=10] (1.5,0.5)
    to [out=190 ,in=270] (0.25,1);
    \node [scale=0.8]at (1,1) {$\Omega_0$};
    \node [scale=0.8]at (1,2.2) {$\Omega$};
    \node [scale=0.8]at (-0.4,1.5) {$\GaI$};
    \node [scale=0.8]at (2.7,2.2) {$\GaD$};
  \end{tikzpicture}
  \caption*{(a) \itshape{A non-trapping obstacle.}}
  \end{minipage}
  \begin{minipage}[t]{0.47\textwidth}
  \centering
  \begin{tikzpicture}
    \draw (0,3)--(0.5,3)--(1,1)--(3,1)--(3.5,3)--(4,3);
    \draw [dashed] (4,3)--(3,4)--(1,4)--(0,3);
    \draw [fill=cyan!20] (1,2) to [out=90,in=180] (1.5,2.5)
    to [out=0,in=180] (2.75,2.2) to [out=0,in=10] (2.25,1.5)
    to [out=190 ,in=270] (1,2);
    \node [scale=0.8]at (1.7,2) {$\Omega_0$};
    \node [scale=0.8]at (2,3) {$\Omega$};
    \node [scale=0.8]at (0.2,3.7) {$\GaI$};
    \node [scale=0.8]at (0.35,2) {$\GaD$};
  \end{tikzpicture}
  \caption*{(b) \itshape{A cavity.}}
  \end{minipage}
  \caption{\itshape{Setting of the NLH problem.}}
  \label{fig:region}
\end{figure}

For wave scattering problems with high wave number, it is known that 
finite element discretizations may suffer from the so-called pollution effect because of its highly indefiniteness.
For example, it was proved in the one-dimensional case \cite{Ihlenburg:Babuska:1995,Ihlenburg:Babuska:1997} that when $kh\leq\tilde\alpha<\pi$ 
(for a positive constant $\tilde\alpha$), the $H^1$-norm error of the linear FE solution on equidistant mesh is less than $O(kh+k^3h^2)$, where $h$ is the mesh size. The first term is of the same order as the well-known interpolation error, while 
the second term is the so-called pollution error term, which would dominate if $k^2h$ is large. 
An error estimate containing a non-negligible pollution term is named preasymptotic error estimate, otherwise it is called asymptotic error estimate.
We refer to \cite{Zhu:Wu:2013,Wu:2013,Du:Wu:2015,Graham:Sauter:2020,Graham:Spence:Zou:2020} for preasymptotic error estimates of FEMs for linear Helmholtz equations. 
The $hp$-FEM was shown to be pollution-free if its order is proportional to $\ln k$ \cite{Melenk:Sauter:2010,Melenk:Sauter:2011}. 
For NLH equations with the impedance boundary or perfect match layer (PML), 
{\it a priori} preasymptotic error estimates was developed \cite{Wu:Zou:2018,Jiang:Li:Wu:Zou:2022}.

While the advances in the {\it a priori} error analysis are significant, low regularity and high oscillation in the exact solutions to the NLH system \eqref{eq:Helm}--\eqref{eq:Dir} make a practically highly unacceptable impact on the efficiency and accuracy of FEMs over uniformly refined meshes. This impact may be attributed to three important factors: (1) singularities with the exact solutions around reentrant corners of the boundary (see Fig.\,\ref{fig:region}); (2) a locally rapid change inherent in the exact solutions due to the nonlinear term $\abs{u}^2u$ in \eqref{eq:Helm}; (3) the discontinuity across the interface $\pa \Omz$.
Some adaptive techniques, especially those featuring {\it a posteriori} error estimates and local mesh refinements, 
are proved to be very effective to improve the numerical accuracies. This is the main motivation of the current work, 
to couple FEMs with adaptivity for the NLH system \eqref{eq:Helm}--\eqref{eq:Dir} to reduce the computational cost and 
maintain the accuracy.
Since the groundbreaking work \cite{Babuska:Rheinboldt:1978} by Babu\v{s}ka and Rheinboldt in 1978, adaptive finite element methods (AFEMs) based on {\it a posteriori} error estimates have played a pivotal role in the realm of scientific and engineering computing. For positive definite elliptic problems, the theory of AFEMs has been well developed 
% (see, e.g., \cite{Dorfler:1996,Morin:Nochetto:Siebert:2002,Mekchay:Nochetto:2005,Stevenson:2007, Cascon:Kreuzer:Nochetto:Siebert:2008}) 
(see, e.g., \cite{Morin:Nochetto:Siebert:2002,Mekchay:Nochetto:2005,Cascon:Kreuzer:Nochetto:Siebert:2008}). %Morin:Nochetto:Siebert:2000
For linear Helmholtz equations with high wave number, there are already some studies; see \cite{Dorfler:Sauter:2013,Zhou:Zhu:2015,Bespalov:Haberl:Praetorius:2017,Chaumont-Frelet:Ern:Vohralik:2021,Duan:Wu:2023}. In particular, Duan and Wu \cite{Duan:Wu:2023} proved the convergence and quasi-optimality of a linear AFEM with initial mesh size in the preasymptotic regime, which is slightly different from the standard adaptive algorithm, i.e., in each adaptive iteration, slightly more elements need to be refined than the marked elements. Moreover, it is proved that the {\it a posteriori} error estimator seriously underestimates the error of the FE solution in the preasymptotic regime, which was first observed by  Babu\v{s}ka, {\it et al.} \cite{Babuska:Ihlenburg:Strouboulis:Gangaraj:1997}. We remark that the usual techniques based on Galerkin orthogonality (or quasi-othogonality) for AFEM can only give convergence and quasi-optimality of AFEMs for linear Helmholtz equations in asymptotic regime \cite{Zhou:Zhu:2015}.  An ``elliptic projection argument" was proposed in \cite{Duan:Wu:2023} to derive preasymptotic estimates for AFEMs, 
which first builds some equivalent relationships between the {\it a posteriori} error estimators of the FE solution $u_n$ 
produced by the AFEM and the elliptic projections $\tilde u_n$ of the exact solution, and 
then convert the analysis of $u_n$ to that of $\tilde u_n$.
%which can be done by using the theory of AFEM for elliptic problems.
As regards AFEMs for nonlinear elliptic problems, major efforts have been concentrated on quasi-linear problems of $p$-Laplacian and strongly monotone type over the past decades; see e.g., \cite{Veeser:2002,Garau:Morin:Zuppa:2012,Garau:Morin:Zuppa:2011,Haberl:Praetorius:Schimanko:Vohralik:2021,Heid:Wihler:2020a}.
% It was proved in \cite{Veeser:2002} that the approximate solutions generated by AFEMs converge to the true solution of the $p$-Laplacian problem in the $W^{1,p}$-norm. This result was then improved in \cite{Diening:Kreuzer:2008} and \cite{Belenki:Diening:Kreuzer:2012}, where the linear convergence (contraction property) and the quasi-optimality of AFEMs were established in the context of Sobolev-Orlicz spaces respectively. For monotone-type problems, one may refer to \cite{Garau:Morin:Zuppa:2012} for quasi-optimality of a standard adaptive algorithm while the linear convergence and decay rate of AFEMs, with an inexact solve for a linearized discrete problem, were established in \cite{Gantner:Haberl:Praetorius:Schimanko:2021,Gantner:Haberl:Praetorius:Stiftner:2018,Garau:Morin:Zuppa:2011,Haberl:Praetorius:Schimanko:Vohralik:2021,Heid:Praetorius:Wihler:2021,Heid:Wihler:2020a,Heid:Wihler:2020b}. 
For the semilinear elliptic equations with the Dirichlet boundary condition and 
the uniformly elliptic diffusion part, 
there were some studies on the convergence and the optimality of AFEMs; see 
\cite{Amrein:Heid:Wihler:2023,Becker:Brunner:Innerberger:Melenk:Praetorius:2022,Becker:Brunner:Innerberger:Melenk:Praetorius:2023}. 
% \cite{Amrein:Heid:Wihler:2023,Becker:Brunner:Innerberger:Melenk:Praetorius:2022,Becker:Brunner:Innerberger:Melenk:Praetorius:2023,Holst:Pollock:Zhu:2015,Xu:Huang:Yang:Ma:2022}. 
%for the equation $-\nabla \cdot ( A \nabla u ) + y(x,u) = f$ in $\Omega$ complemented by the homogeneous Dirichlet boundary condition. 
%Here the linear diffusion operator is uniformly elliptic. 
It should be pointed out that the nonlinear term $g(x,\cdot)$ in these works were assumed to be monotone  \cite{Becker:Brunner:Innerberger:Melenk:Praetorius:2022,Becker:Brunner:Innerberger:Melenk:Praetorius:2023}
% \cite{Becker:Brunner:Innerberger:Melenk:Praetorius:2022,Becker:Brunner:Innerberger:Melenk:Praetorius:2023,Holst:Pollock:Zhu:2015,Xu:Huang:Yang:Ma:2022} 
or to possess asymptotic linear growth in the infinity \cite{Amrein:Heid:Wihler:2023} with respect to the second  argument, i.e., $\partial_{\xi}g(x,\xi)$ is nonnegative or bounded from above and below to be precise. As a result, the associated nonlinear operator allows one to use the monotone operator theory in functional analysis or the monotone method of sub- and supersolutions in the theory of semilinear PDEs. However, the monotonicity or linear growth conditions fail for the nonlinear problem \eqref{eq:Helm}--\eqref{eq:Dir} of our interest in this work. 
So our theoretical developments and analyses here are essentially different from those in the existing references, and several new strategies will play a pivotal role in our analyses.

The first focus of this work is to establish a preasymptotic {\it a posteriori analysis}. 
%the first time for the NLH problem with high wave number.
One of the key developments in the analysis is to extend the ``elliptic projection argument" proposed in \cite{Duan:Wu:2023} for the linear Helmholtz problem to the nonlinear problem \eqref{eq:Helm}--\eqref{eq:Dir}. 
%and provide the first preasymptotic {\it a posteriori analysis} for the NLH problem with high wave number. 
To concentrate on the main technical developments and results, 
we shall only focus on the two-dimensional case in this work, 
as the singularity decomposition and some crucial analytic tools and analyses for the theoretical developments 
here will be much more complicated in three dimensions. 
To put it in perspective, our major theoretical developments involve three main parts. 
First, we present some preliminaries including wave-number-explicit stability estimates and a singularity decomposition for the NLH problem and its linear FEM along with {\it a priori} preasymptotic error estimates on shape regular meshes, 
which is crucial to the subsequent AFEMs. Then {\it a posteriori} upper and lower bounds (reliability and efficiency) of residual-type are derived for the FE solutions to the NLH problem. 
For the purpose, it is important that we will adopt 
an error estimator on a finite element that is an elegant modification of the standard one, namely constructed by adding contributions of element residuals of neighbouring elements that have a common edge with this element 
(see  \S\,\ref{s:error estimators}), although the two error estimators are equivalent on the entire finite element mesh. 
With all these preparations, we will be also able to demonstrate that the key observation by Babu\v{s}ka {\it et al.} \cite{Babuska:Ihlenburg:Strouboulis:Gangaraj:1997} for a one-dimensional linear Helmholtz problem still holds for the current NLH problem, that is, the residual type error estimator seriously underestimates the true error of the FE solution for $h$ in the preasymptotic regime. 
As the last development, based on the above-mentioned modified error estimator, the convergence as well as the quasi-optimality of the standard adaptive algorithm will be established when the initial mesh size lies in the preasymptotic regime.

We remark that it is the first time to develop an AFEM for the NLH problem here, 
and with a systematic analysis on the convergence and quasi-optimality of the AFEM. 
As we shall see, the theoretical developments here for the NLH system 
are highly nontrivial and much more technical in comparison with those for the linear Helmholtz problem, 
due to some substantially technical differences and difficulties as addressed above and 
those more as described below.
The first major difficulty comes from the treatment of the nonlinearity in \eqref{eq:Helm}. 
For this, we need a wave-number-explicit interior $L^\infty$ estimate for the FEM, which is crucial for both {\it a priori} and {\it a posteriori} estimates. The existing studies of NLH problems hold only for quasi-uniform meshes (see \cite[Lemma~3.4, Theorem~3.6]{Wu:Zou:2018},\cite[Lemma~4.3, Theorem~4.6]{Jiang:Li:Wu:Zou:2022}). There exist some results on non-quasi-uniform meshes for FEMs for 
standard linear elliptic problems (see \cite{Li:2017}), but they require that the meshes obey some particular pattern which may not be satisfied by the locally refined meshes generated in general adaptive finite element iterations.  By using the elliptic projection of the exact solution as a bridge,  the inequality $\Linf{w_h}\ls |\ln h_{{\cal T}, {\rm min}}|^{1/2} \|w_h\|_{1, \Omega_0}$
%\lnhmin\HoD{w_h}{\Omz}
on FE spaces, and through some subtle estimations, we are able to achieve a desired interior $L^\infty$ estimate for the discrete NLH problem on general regular meshes (see Lemma~\ref{lem:interior-estimate} and Theorem~\ref{thm:dpstab}). 
We think that similar arguments can be also used to help improve the interior $L^\infty$ estimates for FEMs for 
standard elliptic problems.
Another key challenge is the establishment of the convergence and quasi-optimality of 
our new adaptive algorithm, based on our modified error estimator, which does not satisfy 
the estimator reduction property as for the elliptic problems
% (see \cite[Remark 4.2]{Duan:Wu:2023})
(see, e.g., \cite[Corollary 3.4]{Cascon:Kreuzer:Nochetto:Siebert:2008}). 
%(see Lemma~\ref{lem:estimator reduction})
%This turns out to be not as straightforward as they were indicated in \cite[Remark 4.2]{Duan:Wu:2023}, because it does not satisfy the estimator reduction property (see Lemma~\ref{lem:estimator reduction}). 
We have to introduce a new auxiliary equivalent error estimator by carefully adjusting the weights of the contributions of element residuals of neighbouring elements, to make the application of the ``elliptic projection argument" possible (see  \S\,\ref{s:4} for details).

The rest of this paper is organized as follows. In \S\,\ref{s:2}, we derive some {\it a priori} stability estimates for the continuous and discrete NLH problems along with preasymptotic error estimates on shape regular meshes for the later. 
% \cb{In the end of this section, we introduce the continuous interior penalty finite element method (CIPFEM) to reduce the pollution errors.} 
  \S\,\ref{s:3} is devoted to {\it a posteriori} upper and lower bounds for the errors of the FEM. By establishing some equivalent relationships between error estimators for the FE solution and the elliptic projection, the convergence and quasi-optimality of the proposed AFEM are proved in   \S\,\ref{s:4}.
In   \S\,\ref{s:5}, 
we firstly introduce the continuous interior penalty finite element method (CIPFEM) to reduce the pollution errors, then provide some numerical examples to verify the performances of the AFEM and adaptive CIPFEM (ACIPFEM), at last we  simulate the optical bistability phenomenon with Gaussian incident waves by using the ACIPFEM.
% we provide some numerical examples to verify the performances of the AFEM and adaptive CIPFEM (ACIPFEM), at last we  simulate the optical bistability phenomenon with Gaussian incident waves by using the ACIPFEM.

For the simplicity of notation, $C$ is used to denote a generic positive constant that does not depend on $k,\vep, h, f,g$,  and the penalty parameters. Additionally we use the abbreviated tokens $A\ls B$ and $B\gtrsim A$ for the inequality $A\leq CB$. $A\eqsim B$ is a notation for the situation that $A\ls B$ and $A\gtrsim B$. We also use $\theta_0$ and $C_0$ to denote two generic positive constants independent of $k,\vep, h, f,g$,  and the penalty parameters, which are sufficiently small but allow  different values to be taken in different lemmas or theorems. For example, the condition ``$k^3h^{1+\alpha}\le C_0$" means that ``$k^3h^{1+\alpha}$ is less than or equal to some sufficiently small constant independent of $k,\vep, h, f,g$,  and the penalty parameters". 
We also use the standard Sobolev spaces, norms and inner products as in 
% \cite{Brenner:Scott:2008,Ciarlet:1978} 
\cite{Brenner:Scott:2008}. In particular, $(\cdot,\cdot)_Q$ and $\ine{\cdot}{\cdot}$ denote the $L^2$-inner product on complex-valued $L^2(Q)$ and $L^2(e)$ spaces, respectively.
We shall often shorthand the subscript index of the norm and semi-norm of the Sobolev space $H^s(\Om)$ by $\norm{\cdot}_{s}$ and $\abs{\cdot}_{s}$. When the function space is not in the whole domain, we may add the domain or the boundary 
in the subscript like $\norm{\cdot}_{s,\Omz}$ or $\norm{\cdot}_{s,\GaI}$.
In particular, we shall use the energy norm $\He{\cdot}:=\big(\abs{\cdot}_1^2+k^2 \norm{\cdot}_0^2\big)^\frac12$ on $H^1(\Om)$.

\section{Preliminaries}\label{s:2}

In this section, for the purpose of the {\it a posteriori} analysis of the FEM for the NLH equation, we first introduce an auxiliary linearized problem and give its wave-number-explicit stability estimates and an decomposition of the solution into a regular part and a singular part. Then we derive stability estimates for the NLH by considering its Newton's iteration. Afterwards, we similarly derive {\it a priori} stability and preasymptotic error estimates for the finite element discretization of the NLH equation.

We begin with some geometric assumptions (see figure \ref{fig:region}). 
We assume that angles of the corners on $\GaI$ are less than $\pi$ and either $\overline\GaI\cap\overline\GaD=\emptyset$ or the angles where $\GaD$ and $\GaI$ meets are less than $\pi/2$. It means the singularities only can appear at the corners on $\GaD$. The reentrant angles on $\GaD$ are denoted by $\{\omega_i\}_{1\leq i\leq N}$ and $\omega_{\rm max}$ is defined as $\max_{1\leq i\leq N} \omega_i$, where $N$ is the number of non-convex vertices of $\GaD$. 
We also assume that $\Omz$ is a domain with Lipschitz boundary, and that $\Om$ is strictly ``star-shaped'', i.e., there exists a point $x_0\in \R^2$ and two constants $\gamma_{\rm Dir}, \gamma_{\rm imp}>0$ such that
  \eq{\label{eq:star shaped}
    \begin{aligned}
    (x-x_0)\cdot n(x)&\leq -\gamma_{\rm Dir} \quad \forall x\in \GaD;\quad (x-x_0)\cdot n(x)&\geq \gamma_{\rm imp} \quad \forall x\in \GaI.
  \end{aligned}
  }
% For sake of simplicity, we assume that the angles on $\overline\GaI$ are less than $\pi/2$, that is to say, the singularities only can appear at the corners on $\GaD$. The reentrant angles on $\GaD$ are denoted by $\{\omega_i\}_{1\leq i\leq N}$ and $\omega_{\rm max}$ is defined as $\max\limits_{1\leq i\leq N} \omega_i$, where $N$ is the number of non-convex vertices of $\GaD$.

\subsection{An auxiliary linearized problem}
% First of all, we denote by
% \begin{equation}\label{eq:semilinear form definition}
%   a_{\psi}(u,v):=(\na u,\na v)-k^2\big((1+\vep\oneo\abs{\psi}^2)u,v\big)+\i k\langle u,v\rangle_{\GaI}.
% \end{equation}
The variational formulation of \eqref{eq:Helm}--\eqref{eq:Dir} reads: find $u\in H^1_{\GaD}(\Omega):=\{v\in H^1(\Om): v|_{\GaD}=0\}$, such that
\eq{a_{u}(u,v)=(f,v)+\langle g,v\rangle_{\GaI} \quad \forall v\in \HoG,\label{eq:NLHva}}
where $a_{\psi}(u,v):=(\na u,\na v)-k^2\big((1+\vep\oneo\abs{\psi}^2)u,v\big)+\mbi k\langle u,v\rangle_{\GaI}.$
For given $\psi\in L^\infty(\Om_0)$ and $F\in L^2(\Omega)$, consider the following auxiliary linearized problem:
find $w_\psi\in \HoG$ such that
\eq{\label{eq:auxiliary cp}
  a_0(w_\psi,v)-k^2\vep\big(2\abs{\psi}^2w_\psi+\psi^2\overline{w_\psi},v\big)_{\Omz}=(F,v)+\langle g,v\rangle_{\GaI}\quad \forall v\in \HoG.
}
% \eq{\label{eq:auxiliary cp0}
%   \big(\nabla w_\psi,\nabla v\big)-k^2(w_\psi,v)-k^2\vep\big(2\abs{\psi}^2w_\psi+\psi^2\overline{w_\psi},v\big)_{\Omz}+\i k\langle w_\psi,v\rangle_{\GaI}=(F,v)+\langle g,v\rangle_{\GaI}
% }
% for all $v\in\HoG$, where $ H^1_{\GaD}(\Omega):=\{v\in H^1(\Om): v|_{\GaD}=0\}$. 
% By the definition \eqref{eq:semilinear form definition}, \eqref{eq:auxiliary cp0} can be rewritten as

% The auxiliary problem \eqref{eq:auxiliary cp} can be viewed as some linearization of the NLH problem.

Next, we recall the stability estimate and  singularity decomposition for the auxiliary problem with $\psi=0$, i.e., the linear Helmholtz problem $-\Delta w_0-k^2 w_0=F$ with boundary conditions \eqref{eq:Imp}--\eqref{eq:Dir}. 
Let $\chi (r)$ be a $C^{\infty}$ cutoff function that equals $1$ in a neighborhood of the origin and has a support sufficiently close to origin such that $\chi(r)$ centered at every reentrant vertice vanish at other corners and in $\Omz$.
% The reentrant vertices read $\{x_j\}_{j=1}^N$ and 
The corresponding local polar coordinates system at $x_j$ is $(r_j,\theta_j)$ such that the two sides of the reentrant corner at $x_j$ locate on $\theta_j=0$ and $\theta_j=\omega_j$, respectively.
\begin{lemma}\label{lem:linear problem:singularity decomposition}
 There exists a unique solution $w_0$ to \eqref{eq:auxiliary cp} with $\psi=0$ satisfying
  \begin{equation}\label{eq:linear problem:stability}
    \He{w_0}\ls \MFg,
  \end{equation}
  and $w_0$ has a decomposition: $w_0=w_{R}+\sum_{j=1}^NC_k^jS_j$ where $w_R\in \HoG\cap H^2(\Om)$, $S_j(x)=\chi(r_j)r_j^{\alpha_j}\sin(\alpha_j\theta_j)$ with $\alpha_j=\frac{\pi}{\omega_j}$, and the constants $C_k^j\in \mathbb{C}$. Moreover,
  \eqn{
    \begin{aligned}
     \Ht{w_R}\ls k\hMFg\qaq \big\vert C_k^j\big\vert \ls k^{\alpha_j-\frac12}\MFg,
    \end{aligned}
  }
  where $\MFg=\Lt{F}+\LtD{g}{\GaI}$, $\hMFg=\MFg+k^{-1}\left\Vert g\right\Vert_{\frac 12,\GaI}.$
\end{lemma}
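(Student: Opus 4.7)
The plan is to prove the lemma in three stages: first existence/uniqueness and the energy bound \eqref{eq:linear problem:stability}, then the Kondrat'ev-style singularity decomposition, and finally the $k$-explicit estimates on $w_R$ and $C_k^j$.

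For existence, uniqueness and \eqref{eq:linear problem:stability}, I would first establish the a priori bound and then invoke the Fredholm alternative. Testing \eqref{eq:auxiliary cp} (with $\psi=0$) against $w_0$ and taking the imaginary part gives $k\Ltd{w_0}{\GaI}^2\ls\MFg\,\He{w_0}$. Next I apply the Rellich--Morawetz multiplier $(x-x_0)\cdot\nabla\bar w_0+\alpha\bar w_0$; the star-shaped assumption \eqref{eq:star shaped} makes the boundary contributions on $\GaD$ and $\GaI$ have the right sign (after absorbing the $L^2(\GaI)$ piece already controlled), yielding $\He{w_0}^2\ls\MFg\,\He{w_0}$. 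Because $w_0$ is only $H^{1+\min_j\alpha_j}$ away from the reentrant vertices, the multiplier identity must be applied on $\Om\setminus\bigcup_j B_\delta(x_j)$ and the integrals on $\pa B_\delta(x_j)$ shown to vanish as $\delta\to 0$ using the Kondrat'ev asymptotics $|w_0|\ls r^{\alpha_j}$ and $|\nabla w_0|\ls r^{\alpha_j-1}$ together with $\alpha_j>1/2$.

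For the singularity decomposition, I would rewrite the equation as the Poisson problem $-\Delta w_0=\tilde F$ with $\tilde F:=F+k^2 w_0\in L^2(\Om)$ and the given mixed boundary data. The Kondrat'ev/Grisvard theory for polygonal domains then yields $w_0=w_R+\sum_{j=1}^N C_k^j S_j$ with $w_R\in H^2(\Om)\cap\HoG$, together with the duality formula
\eqn{
C_k^j=\frac{1}{\pi}\int_\Om \tilde F\,\chi(r_j)\,r_j^{-\alpha_j}\sin(\alpha_j\theta_j)\,dx
}
(the cutoff $\chi$ can be chosen so that its support is disjoint from $\GaI$ by the angle assumption, so $g$ does not contribute here). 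Standard elliptic regularity for the Poisson problem with impedance/Dirichlet data then gives $\Ht{w_R}\ls\Lt{\tilde F}+\|g\|_{1/2,\GaI}+\sum_j|C_k^j|$.

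The sharp $k$-dependence in $|C_k^j|\ls k^{\alpha_j-1/2}\MFg$ is the main obstacle. A naive Cauchy--Schwarz in the duality formula yields only $|C_k^j|\ls\Lt{\tilde F}\ls k\MFg$, which is too crude. To sharpen, I would exploit the fact that $w_0$ oscillates on scale $k^{-1}$: split the dual singular test function across the ball $B_{1/k}(x_j)$. Inside the ball, the Kondrat'ev asymptotics combined with the $H^1$ stability of $w_0$ produce $\int_{B_{1/k}(x_j)}k^2|w_0|\,r_j^{-\alpha_j}\ls k^{\alpha_j-1/2}\MFg$ after rescaling; outside, an integration by parts moves two derivatives off $w_0$ onto $\chi r_j^{-\alpha_j}\sin(\alpha_j\theta_j)$ (whose Laplacian vanishes where $\chi\equiv 1$) and the remaining terms are dominated by $\He{w_0}\ls\MFg$ together with the cutoff-localized contribution. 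Feeding $|C_k^j|\ls k^{\alpha_j-1/2}\MFg$ back into the $H^2$ bound on $w_R$ and using $\Lt{k^2 w_0}\ls k\MFg$, with the extra $k^{-1}\|g\|_{1/2,\GaI}$ term of $\hMFg$ absorbing the boundary-regularity contribution, yields the claimed estimate $\Ht{w_R}\ls k\,\hMFg$.
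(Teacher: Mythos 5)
The paper's ``proof'' of this lemma is only a citation: the stability estimate \eqref{eq:linear problem:stability} is attributed to Cummings--Feng and Hetmaniuk, and the decomposition with its $k$-explicit bounds to Chaumont-Frelet--Nicaise and Duan--Wu. So your task was to reconstruct what those references do, and the overall architecture you chose -- imaginary part on $\GaI$, Rellich--Morawetz multiplier on a star-shaped domain for the energy estimate, Kondrat'ev/Grisvard decomposition for the Poisson problem with $\tilde F = F + k^2 w_0$, then a $k$-explicit analysis of the singularity coefficient -- is indeed the right strategy and is consistent with those references.

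However, the decisive step, the bound $|C_k^j|\ls k^{\alpha_j-\frac12}\MFg$, is not established by the argument you sketch, and this is not a cosmetic gap. Inside $B_{1/k}(x_j)$, you claim that Kondrat'ev asymptotics together with $H^1$ stability give $\int_{B_{1/k}} k^2 |w_0|\,r_j^{-\alpha_j}\,dx \ls k^{\alpha_j-\frac12}\MFg$. Check the exponents: Cauchy--Schwarz yields
$k^2\,\|w_0\|_{L^2(B_{1/k})}\,\|r_j^{-\alpha_j}\|_{L^2(B_{1/k})}$, and since $\|r_j^{-\alpha_j}\|_{L^2(B_{1/k})}\sim k^{\alpha_j-1}$ while the best available bound is $\|w_0\|_{L^2(B_{1/k})}\leq\|w_0\|_0\ls k^{-1}\MFg$ (a local Poincar\'e estimate using $w_0|_{\GaD}=0$ gives the same $k^{-1}$ order), the total is $k^{\alpha_j}\MFg$, which is short by a factor $k^{1/2}$. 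Invoking the Kondrat'ev asymptotics $|w_0|\ls|C_k^j|\,r^{\alpha_j}$ to improve the local $L^2$ norm is circular, since $|C_k^j|$ is what you are trying to bound. Closing this gap requires a genuinely different mechanism -- for instance, re-expressing $C_k^j$ through a trace pairing of $\partial_n w_0$ on $\GaD$ with $r_j^{\alpha_j-1}$ and invoking a $k$-explicit bound on $\nnorm{\partial_n w_0}_{L^2(\GaD)}$ of order $k^{1/2}\MFg$, or a rescaling argument at scale $1/k$ as in Chaumont-Frelet--Nicaise -- none of which appears in your sketch. As a smaller point, the representation $C_k^j=\frac1\pi\int_\Om \tilde F\,\chi(r_j)r_j^{-\alpha_j}\sin(\alpha_j\theta_j)$ is missing the correction coming from $\Delta(\chi r_j^{-\alpha_j}\sin\alpha_j\theta_j)$ on the annulus where the cutoff transitions; that term is harmless (it is $O(\MFg)$) but should not be silently dropped. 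You should also be careful not to make the $H^1$ energy estimate depend on the Kondrat'ev asymptotics for $w_0$: the Rellich--Morawetz identity can be justified by density without first knowing the corner expansion, and doing it the other way round risks circularity.
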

\begin{proof}
  For \eqref{eq:linear problem:stability}, we refer to \cite{Cummings:Feng:2006,Hetmaniuk:2007}.
  On the other hand, the singularity decomposition comes form \cite{Chaumont-Frelet:Nicaise:2018,Duan:Wu:2023}.
\end{proof}

The following lemma gives estimates of the auxiliary problem with nonzero $\psi$.
\begin{lemma}\label{lem:auxiliary stability}
If $k\vep\Linf{\psi}^2\leq \theta_0$, then the solution to \eqref{eq:auxiliary cp} satisfies
  \begin{equation}\label{eq:auxiliary problem stability}
    \He{w_\psi}\ls \MFg \qaq \Linf{w_\psi}\ls k^{-\frac12}\MFg.
  \end{equation}
Moreover, $w_\psi$ has a decomposition $w_\psi=w_{R}+\sum_{j=1}^N C_k^jS_j$ where $w_R\in \HoG\cap H^2(\Om)$ and $C_k^j\in \mathbb{C}$ satisfy:
\eq{\label{eq:auxiliary problem decomposition}
  \begin{aligned}
   \Ht{w_R}\ls k\hMFg \qaq \big\vert C_k^j\big\vert \ls k^{\alpha_j-\frac12}\MFg.
  \end{aligned}
}
\end{lemma}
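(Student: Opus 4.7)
The approach is to view \eqref{eq:auxiliary cp} as a \emph{linear} Helmholtz problem with a perturbed source and reduce everything to Lemma \ref{lem:linear problem:singularity decomposition} via an absorption argument. Setting the effective source
\[
\tilde F:= F + k^2\vep\oneo\bigl(2\abs{\psi}^2 w_\psi + \psi^2\overline{w_\psi}\bigr),
\]
equation \eqref{eq:auxiliary cp} rewrites as $a_0(w_\psi,v)=(\tilde F,v)+\langle g,v\rangle_{\GaI}$ for every $v\in\HoG$. Lemma \ref{lem:linear problem:singularity decomposition}, together with the interior $L^\infty$ bound on $\Omz$ inherited from the wave-number-explicit stability theory of \cite{Cummings:Feng:2006,Hetmaniuk:2007}, therefore yields
\[
\He{w_\psi}\ls M(\tilde F,g),\qquad \Linf{w_\psi}\ls k^{-\frac12}M(\tilde F,g),
\]
and a decomposition $w_\psi=w_R+\sum_jC_k^jS_j$ with $\Ht{w_R}\ls k\,\hat M(\tilde F,g)$ and $\abs{C_k^j}\ls k^{\alpha_j-\frac12}M(\tilde F,g)$.

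The next step is absorption. Using $\|w_\psi\|_{0,\Omz}\le k^{-1}\He{w_\psi}$ and the triangle inequality one bounds
\[
M(\tilde F,g)\le \MFg + 3k^2\vep\Linf{\psi}^2\|w_\psi\|_{0,\Omz}\le \MFg + 3k\vep\Linf{\psi}^2\He{w_\psi},
\]
and likewise $\hat M(\tilde F,g)\le \hMFg+3k\vep\Linf{\psi}^2\He{w_\psi}$. Substituting the first inequality into the energy bound and invoking the smallness hypothesis $k\vep\Linf{\psi}^2\le\theta_0$ (with $\theta_0$ fixed so that the resulting prefactor is at most $\tfrac12$) absorbs the $\He{w_\psi}$ contribution on the right into the left, producing $\He{w_\psi}\ls \MFg$. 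Feeding this back gives $M(\tilde F,g)\ls \MFg$ and $\hat M(\tilde F,g)\ls \hMFg$, from which the singularity bounds \eqref{eq:auxiliary problem decomposition} and the interior estimate $\Linf{w_\psi}\ls k^{-\frac12}\MFg$ follow. Since the cutoffs $\chi(r_j)$ vanish on $\Omz$, one actually has $w_\psi|_{\Omz}=w_R|_{\Omz}$, so the $L^\infty$ bound reduces to controlling the regular part in the interior.

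The principal obstacle is importing the sharp $k^{-\frac12}$ scaling for the interior $L^\infty$ norm of the linear problem. A direct Agmon-type interpolation such as $\Linf{w_R}^2\ls \|w_R\|_{0,\Omz}\Ht{w_R}\ls k^{-1}\MFg\cdot k\hMFg$ yields only an $O(1)$ estimate, so the $k^{-\frac12}$ decay must be extracted from the wave-number-explicit Green's function analysis cited above, which reflects the $(kr)^{-\frac12}$ asymptotics of $H_0^{(1)}(kr)$ in two dimensions. Once that linear ingredient is in hand, the nonlinear refinement is a routine fixed-point/absorption argument driven entirely by the smallness condition $k\vep\Linf{\psi}^2\le\theta_0$, without any structural assumption on $\psi$.
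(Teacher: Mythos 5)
Your strategy for the energy estimate and the singularity decomposition — rewrite \eqref{eq:auxiliary cp} as a linear Helmholtz problem with effective source $\tilde F$, apply Lemma~\ref{lem:linear problem:singularity decomposition}, then absorb the $k\vep\Linf{\psi}^2\He{w_\psi}$ contribution via the smallness hypothesis — is exactly what the paper does, and your arithmetic on $M(\tilde F,g)$ is correct. Where you and the paper part ways is on the interior $L^\infty$ bound. You propose to import the linear estimate $\Linf{w_0}\ls k^{-\frac12}M(F,g)$ from Lemma~\ref{lem:linear problem:singularity decomposition} ``together with the interior $L^\infty$ bound on $\Omz$ inherited from the wave-number-explicit stability theory of \cite{Cummings:Feng:2006,Hetmaniuk:2007}'' and then close by absorption. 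But Lemma~\ref{lem:linear problem:singularity decomposition} does not state an $L^\infty$ bound, and \cite{Cummings:Feng:2006,Hetmaniuk:2007} give the energy/$H^2$ stability estimate only; neither source proves the $k^{-\frac12}$ interior $L^\infty$ bound for this mixed impedance--Dirichlet configuration. You are right that a naive Agmon interpolation loses a factor of $k^{\frac12}$ and that the correct scaling comes from the $(kr)^{-\frac12}$ decay of the 2D Helmholtz Green's function, but the precise place this is carried out is \cite[Lemma~2.3]{Wu:Zou:2018} (Green's-function argument for the NLH with pure impedance boundary), which the paper then adapts to the present mixed boundary by handling the additional term $\big\|\tfrac{\partial w_\psi}{\partial n}\big\|_{\GaD}$ via \cite[Prop.~3.3]{Hetmaniuk:2007}. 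So the route is: run the Wu--Zou argument directly on the auxiliary problem (the perturbative $\psi$-term is handled inside that argument), rather than first proving a pure linear $L^\infty$ estimate and then absorbing. Your absorption version of the $L^\infty$ step would also work if the linear estimate were already on record, but it is not, so as written you are deferring the genuinely hard piece to references that do not contain it. Correct the citation to \cite[Lemma~2.3]{Wu:Zou:2018} (plus \cite[Prop.~3.3]{Hetmaniuk:2007} for the Dirichlet contribution) and the proof aligns with the paper's.
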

\begin{proof}
  We rewrite \eqref{eq:auxiliary cp} as
  \eqn{a_{0}(w_\psi,v)=(\wt F,v)+\langle g,v\rangle_{\GaI}\text{ with }\wt F:=F+k^2\vep\oneo(2\abs{\psi}^2w_\psi+\psi^2\overline{w_\psi}), \quad \forall v\in \HoG.}
  Then by Lemma \ref{lem:linear problem:singularity decomposition},
  \eqn{
    \He{w_\psi}\ls M(\wt F,g)\ls \MFg+k\vep\Linf{\psi}^2k\Lt{w_\psi}\ls \MFg+\theta_0\He{w_\psi},
  }
which implies the first estimate in \eqref{eq:auxiliary problem stability} if $\theta_0$ is sufficiently small. Moreover, $w_\psi$ has a decomposition $w_\psi=w_R+C^j_kS_j$ satisfying the estimates:
  \eqn{
    \begin{aligned}
    \Ht{w_R}&\ls k\widehat{M}(\wt F,g)\ls k\hMFg+kk\vep\Linf{\psi}^2k\Lt{w_\psi}\ls k\hMFg \quad \text{and}\\
    \big\vert C^j_k\big\vert&\ls k^{\alpha_j-\frac12}M(\wt F,g)\ls k^{\alpha_j-\frac12}\MFg.
    \end{aligned}
  }
That is, \eqref{eq:auxiliary problem decomposition} holds.

 Finally, the second estimate in \eqref{eq:auxiliary problem stability} may be proved by following the proof of \cite[Lemma 2.3]{Wu:Zou:2018} but an addition term $\big\|\frac{\pa w_\psi}{\pa n}\big\|_{\GaD}$ appears,
 which can be estimated by following the proof of \cite[Proposition 3.3]{Hetmaniuk:2007}. Here we omit the details.
The proof is completed.
\end{proof}

\subsection{Estimates of the NLH problem}
% In this subsection, we derive estimates for the NLH solution of \eqref{eq:Helm}--\eqref{eq:Dir} by the Newton's iteration, which reads

% \begin{equation}\label{eq:iteration2}
%   -(\Delta+k^2)u^{l+1}-k^2\vep\oneo\big(2\big\vert u^l\big\vert^2 u^{l+1}+(u^l)^2\overline{u^{l+1}}\big) =f-2k^2\vep\oneo\big\vert u^l\big\vert^2 u^l\quad \forall l\geq 0,
% \end{equation}
% starting from an initial guess $u^0\in \HoG$.
% Its variational form is to find $u^{l+1}\in \HoG$ such that
% \eq{\label{eq:iteration2var}
%   a_0(u^{l+1},v)-k^2\vep\big(2\big\vert u^l\big\vert^2 u^{l+1}+(u^l)^2\overline{u^{l+1}},v\big)_{\Omz}=(f^l,v)+\langle g,v\rangle_{\GaI}\quad \forall v\in \HoG,
% }
% where $f^l:=f-2k^2\vep\oneo\big\vert u^l\big\vert^2 u^l.$
In this subsection, we derive estimates for the NLH solution of \eqref{eq:NLHva} by Newton's iteration (see, e.g.,\cite{Jiang:Li:Wu:Zou:2022}), which reads: given an initial guess $u^0\in \HoG$, find $u^{l+1}\in \HoG$ for $l=0,1,2,\ldots$ such that
\eq{\label{eq:iteration2var}
  a_0(u^{l+1},v)-k^2\vep\big(2\big\vert u^l\big\vert^2 u^{l+1}+(u^l)^2\overline{u^{l+1}},v\big)_{\Omz}=(f^l,v)+\langle g,v\rangle_{\GaI}\quad \forall v\in \HoG,
}
where $f^l:=f-2k^2\vep\oneo\big\vert u^l\big\vert^2 u^l.$

We show the stability results and singularity decompositions of $\{u^l\}$ in the following Lemma.

\begin{lemma}\label{lem:iteration stability}
 If
  \eq{\label{u0}
    k\norm{u^0}_{0,\Omega_0}\ls\Mfg,\quad \Linf{u^0}\ls k^{-\frac12}\Mfg,\qaq\vep\Mfg^2\leq\theta_0,
  }
   then the Newton's iterative sequence $\{u^l\}(l=1,2,\ldots)$ from \eqref{eq:iteration2var} satisfies
  \begin{equation}\label{eq:iteration C stability}
    \He{u^l}\ls \Mfg \qaq \Linf{u^l}\ls k^{-\frac12}\Mfg.
  \end{equation}
Moreover, $u^l$ has a decomposition $u^l=u^l_{R}+\sum_{j=1}^N C_k^{j,l}S_j$ where $u_R\in \HoG\cap H^2(\Om)$ and the constants $C_k^{j,l}\in \mathbb{C}$ with the estimates:
\eq{\label{eq:iteration decomposition}
  \begin{aligned}
   \Ht{u^l_R}\ls k\hMfg \qaq \big\vert C_k^{j,l}\big\vert \ls k^{\alpha_j-\frac12}\Mfg.
  \end{aligned}
}
\end{lemma}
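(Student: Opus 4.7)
The plan is to proceed by induction on $l$, leveraging the observation that \eqref{eq:iteration2var} coincides with the auxiliary linearized problem \eqref{eq:auxiliary cp} upon setting $\psi=u^l$ and $F=f^l$. Consequently, I would invoke Lemma \ref{lem:auxiliary stability} at every Newton step, provided its smallness hypothesis $k\vep\Linf{u^l}^2\le \theta_0$ can be verified at that stage. The hypothesis \eqref{u0} on $u^0$ gives $k\vep\Linf{u^0}^2\ls \vep\Mfg^2\le \theta_0$, which serves as the base case after shrinking $\theta_0$ if necessary to absorb the hidden universal constant.

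For the inductive step, I would assume that $u^l$ satisfies \eqref{eq:iteration C stability}; then $k\vep\Linf{u^l}^2\ls \vep\Mfg^2\le \theta_0$ still holds, so Lemma \ref{lem:auxiliary stability} would yield
\[
  \He{u^{l+1}}\ls M(f^l,g),\qquad \Linf{u^{l+1}}\ls k^{-\frac12}M(f^l,g),
\]
together with the singularity decomposition whose coefficients are controlled by $M(f^l,g)$ and whose regular part is controlled by $\widehat M(f^l,g)$. The remaining task is therefore to bound $M(f^l,g)$ and $\widehat M(f^l,g)$ in terms of $\Mfg$ and $\hMfg$ respectively. Using $\LtD{u^l}{\Omz}\le k^{-1}\He{u^l}\ls k^{-1}\Mfg$ together with the inductive $L^\infty$ bound,
\[
  \Lt{f^l}\le \Lt{f}+2k^2\vep\Linf{u^l}^2\LtD{u^l}{\Omz}\ls \Lt{f}+\vep\Mfg^3\ls \Mfg,
\]
where the cubic term is absorbed via $\vep\Mfg^2\le \theta_0$. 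Hence $M(f^l,g)\ls \Mfg$, and since $f^l$ and $f$ differ only in the $L^2$ source part, also $\widehat M(f^l,g)\ls \hMfg$. Substituting these back closes the induction and delivers both \eqref{eq:iteration C stability} and \eqref{eq:iteration decomposition}.

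The principal obstacle I anticipate is keeping the implicit constants independent of the iteration index $l$: each invocation of Lemma \ref{lem:auxiliary stability} inserts a universal multiplicative factor, and compounding across iterations must be ruled out. The smallness assumption $\vep\Mfg^2\le \theta_0$ is precisely what prevents this, forcing the cubic perturbation $2k^2\vep\oneo\abs{u^l}^2u^l$ to remain dominated by $f$ uniformly in $l$ so that the fixed constant cascade does not accumulate. A secondary delicacy is that the $L^\infty$ bound on $u^{l+1}$ is the very quantity that drives the smallness check at the next step; it is inherited directly from the second estimate in \eqref{eq:auxiliary problem stability} of Lemma \ref{lem:auxiliary stability} and does not deteriorate, which is what makes the induction self-sustaining.
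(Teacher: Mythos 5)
Your approach — induction on $l$, recognizing that \eqref{eq:iteration2var} is an instance of the auxiliary linearized problem with $\psi=u^l$, $F=f^l$, and then controlling $M(f^l,g)$ and $\widehat M(f^l,g)$ via the inductive $L^\infty$ and $L^2$ bounds — is exactly the paper's strategy, and your estimate $\Lt{f^l}\le\Lt{f}+2k^2\vep\Linf{u^l}^2\LtD{u^l}{\Omz}\ls\Mfg$ is the correct workhorse. You have, however, left the single delicate point — uniformity of the implicit constants in $l$ — at the level of a prose promise rather than a proof. As written, each $\ls$ in your inductive step could in principle hide a constant that depends on the constant at step $l$, so the chain does not visibly close. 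The paper closes it by fixing the inductive constants explicitly once and for all: with $C_t,C_\infty$ the constants from Lemma~\ref{lem:auxiliary stability} and $C_t^0,C_\infty^0$ those in \eqref{u0}, it sets $\wt C_t=\max\{2C_t,C_t^0\}$, $\wt C_\infty=\max\{2C_\infty,C_\infty^0\}$, requires $\vep\Mfg^2\le\hat\theta:=\min\{\theta_0/\wt C_\infty^2,\ 1/(2\wt C_t\wt C_\infty^2)\}$, and then checks $M(f^{l-1},g)\le(1+2\wt C_t\wt C_\infty^2\hat\theta)\Mfg\le 2\Mfg$, so that $\He{u^l}\le C_t\cdot 2\Mfg\le\wt C_t\Mfg$ and likewise for the $L^\infty$ and decomposition bounds. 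You should carry out this bookkeeping (or something equivalent) to make the induction airtight; absent it, the step ``$\ls\Mfg$, hence the inductive hypothesis holds at $l+1$'' is a gap, not a proof.
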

\begin{proof}
  We prove this lemma by induction. We let $C_t, C_{\infty}, C_r, C_s, C_t^0, C_\infty^0$ be the hidden constants in \eqref{eq:auxiliary problem stability}, \eqref{eq:auxiliary problem decomposition}, and \eqref{u0}, respectively.
  Denote by $\wt C_t=\max\{2C_t,C_t^0\},\wt C_{\infty}=\max\{2C_{\infty},C_\infty^0\},\wt C_r=2C_r$, and $\wt C_s=2C_s$.    Suppose $\vep\Mfg^2\leq\hat\theta := \min \Big\{ \frac{\theta_0}{\wt C_{\infty}^2}, \frac1{2\widetilde C_t \widetilde C_\infty^2}\Big\}$ with $\theta_0$ from Lemma~\ref{lem:auxiliary stability}.
Next we prove by induction that \eqref{eq:iteration C stability} and \eqref{eq:iteration decomposition} hold with hidden constants $\wt C_t, \wt C_{\infty}$ and $\wt C_r, \wt C_{s}$, respectively. It suffices to prove them under the conditions:
 \eq{\label{ul-1}
    k\norm{u^{l-1}}_{0,\Omega_0}\le\wt C_t\Mfg,\quad \Linf{u^{l-1}}\le k^{-\frac12}\wt C_{\infty}\Mfg,}
  since \eqref{ul-1} holds for $l=1$. Using \eqref{ul-1}, we have
  \eq{\label{eq:fl-1}M(f^{l-1},g)\leq (1+2\wt C_t\wt C_{\infty}^2\hat\theta)\Mfg\leq 2\Mfg.}
  Therefore, with the aid of Lemma \ref{lem:auxiliary stability}, we can get
  \eqn{
    &\He{u^l}\leq C_t M(f^{l-1},g)\leq \wt C_t\Mfg\qaq
    \Linf{u^l}\leq k^{-\frac12}C_{\infty}M(f^{l-1},g)\leq k^{-\frac12}\wt C_{\infty}\Mfg,
  }
  and $u^l=u^l_{R}+\sum_{j=1}^N C_k^{j,l}S_j$ with the estimates: 
  \eqn{
    \Ht{u^l_R}\leq kC_r\widehat M(f^{l-1},g)\leq k\wt C_r\hMfg \qaq \big\vert C_k^{j,l}\big\vert \leq k^{\alpha_j-\frac12}C_sM(f^{l-1},g)\leq k^{\alpha_j-\frac12}\wt C_s\Mfg.
  }
  The proof is completed.
\end{proof}

Then the well-posedness of \eqref{eq:NLHva} is derived as below:
\begin{theorem}\label{thm:cpstab}
  If $\vep\Mfg^2\leq\theta_0$,
  then \eqref{eq:NLHva} attains a unique solution $u$ satisfying the stability estimates
  \eq{\label{eq:cpstab}
  \He{u} \ls  \Mfg\qaq\Linf{u}\ls k^{-\frac12}\Mfg.
  }
  Moreover, u has a decomposition $u=u_{R}+\sum_{j=1}^N C_k^jS_j$ where $u_R\in \HoG\cap H^2(\Om)$ and the constants $C_k^j\in \mathbb{C}$ with the estimates:
\eq{\label{eq:NLH decomposition}
  \begin{aligned}
   \Ht{u_R}\ls k\hMfg \qaq \big\vert C_k^j\big\vert \ls k^{\alpha_j-\frac12}\Mfg.
  \end{aligned}
}
\end{theorem}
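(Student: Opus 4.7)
The plan is to realize $u$ as the limit of the Newton iterates $\{u^l\}$ from Lemma~\ref{lem:iteration stability}, inherit the estimates \eqref{eq:cpstab} and the decomposition \eqref{eq:NLH decomposition} from those already established for the iterates, and then settle uniqueness by a smallness/contraction argument built on Lemma~\ref{lem:auxiliary stability}. I would initialize with $u^0\equiv 0$, which trivially satisfies \eqref{u0}, so that Lemma~\ref{lem:iteration stability} immediately delivers the uniform bounds $\He{u^l}\ls\Mfg$ and $\Linf{u^l}\ls k^{-\frac12}\Mfg$, together with uniform $H^2$ control of the regular parts $u^l_R$ and uniform bounds on the stress intensity factors $C_k^{j,l}$.

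Next I would show that $\{u^l\}$ is Cauchy in the energy norm. Setting $e^l:=u^{l+1}-u^l$ and subtracting two consecutive Newton equations \eqref{eq:iteration2var}, I would rearrange the result so that its left-hand side matches exactly the linearized operator in \eqref{eq:auxiliary cp} with $\psi=u^l$, while the right-hand side gathers all cross terms coming from expanding $|u^l|^2u^l-|u^{l-1}|^2u^{l-1}$ about $u^l$; these cross terms factor as $e^{l-1}$ times a quadratic expression in $u^l,u^{l-1}$, so by $\Linf{u^l},\Linf{u^{l-1}}\ls k^{-\frac12}\Mfg$ the $L^2$-norm of the source is bounded by $k\vep\Mfg^2\cdot k\Lt{e^{l-1}}\ls\vep\Mfg^2\,\He{e^{l-1}}$. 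Since $k\vep\Linf{u^l}^2\ls\vep\Mfg^2\le\theta_0$, Lemma~\ref{lem:auxiliary stability} applies to the difference equation and yields $\He{e^l}\le \tfrac12\He{e^{l-1}}$ provided $\theta_0$ is small enough, giving a strong limit $u\in\HoG$. Passing to the limit in \eqref{eq:iteration2var} shows that $u$ solves \eqref{eq:NLHva}; the stability \eqref{eq:cpstab} is inherited from \eqref{eq:iteration C stability}. For \eqref{eq:NLH decomposition} I would use the uniform $H^2$ bound on $u^l_R$ and the uniform bound on $C_k^{j,l}$ from \eqref{eq:iteration decomposition} to extract weakly/strongly convergent subsequences in $H^2(\Om)\cap\HoG$ and $\Cm$, respectively; identifying the limit with $u$ yields the decomposition and the required bounds.

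For uniqueness I would assume two solutions $u,\tilde u$ satisfying \eqref{eq:cpstab}, set $e:=u-\tilde u$, and subtract the two weak formulations. As in the Cauchy step, the identity can be cast in the form of \eqref{eq:auxiliary cp} with $\psi=u$, and the right-hand side, after using the algebraic factorization of $|u|^2u-|\tilde u|^2\tilde u$ together with $\Linf{u},\Linf{\tilde u}\ls k^{-\frac12}\Mfg$, is controlled by $k\vep\Mfg^2\cdot k\Lt{e}$. Lemma~\ref{lem:auxiliary stability} then gives $\He{e}\ls\vep\Mfg^2\,\He{e}$, which forces $e=0$ once $\theta_0$ is sufficiently small.

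I expect the genuinely delicate step to be the Cauchy estimate for $e^l$: one must regroup the difference of the cubic linearized terms $2|u^l|^2u^{l+1}+(u^l)^2\overline{u^{l+1}}-2|u^{l-1}|^2u^l-(u^{l-1})^2\overline{u^l}$ so that precisely the part with coefficient $u^l$ remains on the left-hand side (to fit Lemma~\ref{lem:auxiliary stability}), while every remaining contribution is exhibited as a linear factor in $e^{l-1}$ multiplied by a quantity controlled uniformly by the $L^\infty$ bound of Lemma~\ref{lem:iteration stability}. This algebraic bookkeeping, and tracking all absolute constants to choose $\theta_0$ small enough to absorb them in both the Cauchy and the uniqueness steps, is where the argument is most technical; once it is in place, the limiting procedure and the inheritance of \eqref{eq:cpstab}--\eqref{eq:NLH decomposition} are routine.
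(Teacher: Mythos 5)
Your proposal is correct and follows essentially the same strategy as the paper: realize $u$ as the limit of the Newton iterates, use Lemma~\ref{lem:auxiliary stability} together with the uniform $L^\infty$ bounds from Lemma~\ref{lem:iteration stability} to show that consecutive differences contract in the energy norm, pass to the limit to inherit \eqref{eq:cpstab} and \eqref{eq:NLH decomposition}, and obtain uniqueness (among low-energy solutions, as the paper's Remark~\ref{rm:multiple solutions} clarifies) by the same contraction applied to the difference of two solutions. The only stylistic deviation is that you extract weak/strong subsequences from the uniformly bounded $\{u^l_R\}$ and $\{C_k^{j,l}\}$ to get the decomposition of the limit, whereas the paper observes that the geometric decay $\He{v^{l}}\le 2^{-l}\He{v^0}$ dominates the extra factor of $k$ in $\Ht{v^l_R}\ls k\He{v^{l-1}}$ so that $\{u^l_R\}$ is already Cauchy in $H^2$; both variants are valid and deliver the same bounds.
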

\begin{proof}
  Let $v^l=u^{l+1}-u^l$, through \eqref{eq:iteration2var} and simple computations, we get for $l\geq 1$
  % \eqn{
  %   \begin{aligned}
  %     -\Delta v^l-k^2v^l-k^2\vep\oneo\big(2\abs{u^l}v^l+(u^l)^2\overline v^l\big)&=k^2\vep\oneo\big((u^l)^2-(u^{l-1})^2\big)\overline u^l
  %     -2k^2\vep\oneo\abs{u^{l-1}}^2v^{l-1}\quad \text{in} \ \Om,\\
  %     \frac{\pa v^l}{\pa n}+\mbi kv^l&=0 \ \ \text{on}\ \GaI,\quad v^l=0\ \ \text{on}\ \GaD.
  %   \end{aligned}
  % }
  \eqn{
    \begin{aligned}
      a_0(v^l,v)-k^2\vep\big(2\abs{u^l}v^l+(u^l)^2\overline v^l,v\big)_{\Omz}=k^2\vep\big(((u^l)^2-(u^{l-1})^2)\overline u^l-2\abs{u^{l-1}}^2v^{l-1},v\big)_{\Omz} \quad \forall v\in \HoG.
    \end{aligned}
  }
  Then with the aid of Lemma \ref{lem:auxiliary stability} and \eqref{eq:iteration C stability}, we have
  \eqn{
    \begin{aligned}
      \He{v^l}&\leq \wt C_t k^2\vep\big\Vert u^l\big(|u^l|^2-|u^{l-1}|^2\big)\big\Vert_{0,\Omz}+2\wt C_tk^2\vep\big\Vert|u^{l-1}|^2v^{l-1}\big\Vert_{0,\Omz}\\
      &\leq \wt C_tk^2 \vep \big \Vert u^l \big \Vert_{L^\infty(\Omz)} \big \Vert |u^{l}|+|u^{l-1}| \big \Vert_{L^\infty(\Omz)} \big \Vert v^{l-1} \big \Vert_{0} + 2\wt C_tk^2 \vep \big \Vert u^{l-1} \big \Vert_{L^\infty(\Omz)}^2 \big \Vert v^{l-1} \big \Vert_{0}\\
      &\leq \wt C \vep\Mfg^2\He{v^{l-1}}.
    \end{aligned}
  }
  So we let $\vep\Mfg^2$ be sufficiently small to get
  \eqn{\He{v^l}\leq \frac12 \He{v^{l-1}},}
  which means $\{u^l\}$ is a Cauchy sequence in the energy norm.

  From the singularity decompositions in Lemmas \ref{lem:auxiliary stability}--\ref{lem:iteration stability}, we know $v^l = u^{l+1}-u^{l} = u^{l+1}_R-u^l_R+\sum_{j=1}^N(C^{j,l+1}_k-C^{j,l}_k)S_j = v^l_R+\sum_{j=1}^N\wt C^{j,l}_kS_j$ and deduce
  \eqn{
    \begin{aligned}
      \Ht{v^l_R} \ls k \left(k^2\vep\big\Vert u^l\big(|u^l|^2-|u^{l-1}|^2\big)\big\Vert_{0,\Omz}+2 k^2\vep\big\Vert|u^{l-1}|^2v^{l-1}\big\Vert_{0,\Omz}\right) \leq C\theta_0 k k \Lt{v^{l-1}} \leq C\theta_0 k \He{v^{l-1}},
    \end{aligned}
  }
  which implies the sequence $\{u^l_R\}$ is a Cauchy sequence in $H^2(\Om)$, then $u_R:=\lim\limits_{l\to\infty}u^l_R$.

  Similarly, we can get $\big\vert\wt C^{j,l}_k\big\vert\leq Ck^{\alpha_j-\frac12}k\Lt{v^{l-1}}$, then $\lim\limits_{l \to \infty} \wt C^{j,l}_k=0$. Consequently, $C^j_k:=\lim\limits_{l \to \infty}C^{j,l}_k$.
  Then $u=u_R+\sum_{j=1}^NC^j_kS_j$ and the two components satisfying \eqref{eq:NLH decomposition}.

  % Finally considering uniqueness, we assume that there exists another solution of \eqref{eq:Helm}--\eqref{eq:Dir}, denoted by $u^\ast$. Since \eqref{eq:iteration2} holds for both $u$ and $u^\ast$, letting $w=u-u^\ast$ we may obtain as above
  % \[
  %   \begin{aligned}
  %     -\Delta w-k^2 w - k^2 \vep \oneo \big( 2 \abs{u} w + u^2 \overline w \big)
  %     & = k^2 \vep \oneo \big( (u)^2-(u^{\ast})^2 \big) \overline u^\ast - 2 k^2 \vep \oneo \abs{u}^2 w \quad \text{in} \ \Om,\\
  %     \frac{\pa w}{\pa n}+\mbi k w & = 0 \ \ \text{on}\ \GaI,\quad w = 0\ \ \text{on}\ \GaD.
  %   \end{aligned}
  % \]
  Finally considering uniqueness, we assume that there exists another solution of \eqref{eq:NLHva}, denoted by $u^\ast$. Since \eqref{eq:iteration2var} holds for both $u$ and $u^\ast$, letting $w=u-u^\ast$ we may obtain as above
  \[
    \begin{aligned}
      a_0(w,v) - k^2 \vep \big( 2 \abs{u}^2 w + u^2 \overline w,v \big)_{\Omz} = k^2 \vep \big(( u^2-(u^{\ast})^2 ) \overline u^\ast - 2 \abs{u}^2 w,v \big)_{\Omz} \quad \forall v\in \HoG.
    \end{aligned}
  \]
  Then the same argument as for $\He{v^l}\leq \frac12 \He{v^{l-1}}$ implies that $\He{w}\leq \frac12 \He{w}$ when $\vep\Mfg^2$ is sufficiently small. Thus the uniqueness follows with $u=u^\ast$.
\end{proof}

\begin{remark}\label{rm:multiple solutions}
  {\rm (i)} Theorem~\ref{thm:cpstab} states that under the condition $\vep\Mfg^2\leq\theta_0$ indicating that the nonlinearity may not be too strong, the NLH problem attains a unique low-energy solution satisfying the stability estimates and the singularity decomposition. However this result does not exclude the possibility of multiple solutions to the NLH
  problem, nor does it cover the case of strong nonlinearity. The quadratic convergence of Newton's iteration \eqref{eq:iteration2var} when the initial guess is sufficiently close to the exact solution is proved in \cite{Jiang:Li:Wu:Zou:2022}, where the boundary condition is the trucated PML. For the impedance boundary condition, the proof is similar and the resulting quadratic convergence is valid for the high-energy solutions. The details are omitted.

  {\rm (ii)} Here are two other iteration methods.
  The first and simplest one is the frozen-nonlinearity iteration: given $u^0\in\HoG$, find $u^{l+1}\in \HoG$ for $l=0,1,2,\ldots$ such that
  % \eqn{%\label{eq:iteration1}
  % -(\Delta+k^2) u^{l+1} - k^2 \vep \oneo \big\vert u^l \big\vert^2 \big(u^{l+1}\big) = f.
  % }
  % Its variational form reads as find $u^{l+1}\in \HoG$ such that
  \eqn{%\label{eq:iteration1var}
  a_{u^{l}}(u^{l+1},v)=(f,v)+\langle g,v\rangle_{\GaI}\quad \forall v\in \HoG.
  }
 In \cite{Wu:Zou:2018,Maier:Verfurth:2022}, the linear convergence of the iteration sequence for the low-energy solution is derived, that is to say, this iteration converges when $\vep\Mfg^2$ is small enough.

 The another one is the modified Newton's method proposed by \cite{Yuan:Lu:2017} which utilizes $\overline{u^{l+1}}$ in \eqref{eq:iteration2var} instead of $\overline{u^l}$:
%  \eqn{%\label{eq:iteration3}
%     -(\Delta+k^2)u^{l+1}-2k^2\vep\oneo\big\vert u^l\big\vert^2 u^{l+1}=f- k^2\vep\oneo\big\vert u^l\big\vert^2 u^{l}.
%  }
%  Its variational form reads as find $u^{l+1}\in \HoG$ such that
given $u^0\in\HoG$, find $u^{l+1}\in \HoG$ for $l=0,1,2,\ldots$ such that
 \eqn{%\label{eq:iteration3var}
   a_{\sqrt 2u^{l}}(u^{l+1},v)=(f-k^2\vep\oneo\vert u^l\vert^2u^l,v)+\langle g,v\rangle_{\GaI}\quad \forall v\in \HoG.
 }
%  where $\hat f^l:=f-k^2\vep\oneo\vert u^l\vert^2u^l.$
 Numerical results show that the modified Newton's method is only linearly convergent but robust with respect to
 the initial guess. 
%  The linear convergence of the low-energy solution can be proved analogously to the frozen-nonlinearity iteration, but it is not our focus here. We omit the details.
%  We only can prove the situation of low-energy solution and the reasons for its robustness are still unknown. 
% %  (see also \cite{Verfurth:2024}).
%  The proofs of the convergence of the two iterations are similar with Newton's method, we omit the details here.

\end{remark}

\subsection{{\it A priori} estimates of the FEM}\label{s:FEM error estimate}
In this subsection, we derive {\it a priori} stability and preasymptotic error estimates of the linear FEM for the NLH problem on shape regular meshes.

Let $\T$ be a conforming shape regular triangulation of $\Omega$. For simplicity, we assume that the triangulation $\T$ fits the interfaces $\pa\Om_0$, that is, $\pa\Om_0$ does not pass through the interior of any element $K\in\T$.
For any triangle element $T\in\T$, let $h_T:=|T|^\frac12$ be the size of $T$. Denote by $h_\T:=\max_{ T\in\T}h_T$ and $h_{\T,{\rm min}}:=\min_{T\in\T, T\subset\Omz}h_T$. The linear finite element space on $\T$ is denoted by
\eqn{ V_\T :=\{ v \in \HoG :\  v|_{T} \in P_1(T)  \quad \forall T \in \T \}.}
The linear FEM for \eqref{eq:NLHva} reads as: find $u_h\in V_\T$ such that
\eq{a_{u_h}(u_h,v_h)=(f,v_h)+\langle g,v_h\rangle_{\GaI} \quad \forall v_h\in V_\T.\label{eq:FEMva}}

% Let $I_h$ be the Scott-Zhang interpolation operator \cite{Scott:Zhang:1990} onto $v_\T$. There hold the following error estimates (see \cite[(4.3)]{Scott:Zhang:1990}):
% \begin{lemma}\label{lem:SZ} $I_h$ satisfies the following approximation properties: For any $T\in\T$,
% \eq{\label{eq:SZ1}\LtD{v-I_h v}{T}+h_T^\frac12\LtD{v-I_h v}{\pa T}&\ls h_T\sHoD{v}{\tilde T} \quad\forall v\in H^1(\Om),\\
% \LtD{v-I_h v}{T}+h_T\HoD{v-I_h v}{T}&\ls h_T^2\sHtD{v}{\tilde T}\quad\forall v\in H^2(\Om),\label{eq:SZ2}}
% where $\tilde T:=\cup\{T'\in\T:\; T'\cap T\neq\emptyset\}$.
% \end{lemma}
The corresponding auxiliary discrete problem and Newton's iteration read as: find $w_{h}, u_h^{l+1}\in V_\T$ such that
\begin{align}
  a_0(w_{h},v_h)-k^2\vep\big(2\big\vert \psi\big\vert^2 w_{h}+\psi^2\overline{w_{h}},v_h\big)_{\Omz}=(F,v_h)+\langle g,v_h\rangle_{\GaI}\quad \forall v_h\in V_\T,  \label{eq:auxiliary dp1}\\
  a_0(u^{l+1}_h,v_h)-k^2\vep\big(2\big\vert u^l_h\big\vert^2 u^{l+1}_h+(u^l_h)^2\overline{u^{l+1}_h},v_h\big)_{\Omz}=(f^l_h,v_h)+\langle g,v_h\rangle_{\GaI}\quad \forall v_h\in V_\T,\label{eq:iteration dp2}
\end{align}
where $f^l_h:=f-2k^2\vep\oneo\vert u^l_h\vert^2u^l_h$ and $u_h^0\in V_\T$. The other two iterations can be similarly constructed.

Next we introduce the standard FE interpolation operator $I_h$ onto $V_\T$ (see, e.g., \cite[\S 3.3]{Brenner:Scott:2008}). For the singular functions $S_j$, the interpolation error estimates in \cite[Lemma 5.1]{Chaumont-Frelet:Nicaise:2018} are as follows:
\eq{\label{eq:singular interpolation error}
\Lt{S_j-I_hS_j}+h_\T\Ho{S_j-I_hS_j}\ls h^{1+\alpha_j}_\T,\quad j=1,2,\ldots,N.
}
% \sout{When $kh_\T\ls 1$, it is easy to verify that for the exact solution $u$ satisfying the decomposition in Theorem~\ref{thm:cpstab}, we have}
% \eq{\label{eq:interpolation error of u}
% \Ho{u-I_hu}\ls \big(kh_\T+k^{-\frac12}(kh_\T)^{\alpha}\big)\hMfg\qaq \Lt{u-I_hu}\ls \big(kh_\T^2+k^{\alpha-\frac12}h_\T^{1+\alpha}\big)\hMfg,}
% where $\alpha:=\frac{\pi}{\omega_{\rm max}}=\min\limits_{1\leq j\leq N}\alpha_j$.
For any $\phi\in \HoG$, its elliptic projection $P_h\phi\in V_\T$ is defined by
\eq{\label{eq:elliptic projection definition}
  b(P_h\phi,v_h)=b(\phi,v_h)\ \ \forall v_h\in V_\T, \ b(u,v):=(\na u,\na v)+(u,v).}
Naturally, the following Galerkin orthogonality holds 
\eq{\label{eq:GO}
b(\phi-P_h\phi,v_h)=0\quad \forall v_h\in V_\T,
}
 and we have the following projection error estimates (see, e.g., \cite[Lemma 2.4]{Duan:Wu:2023}):
  \eq{\label{eq:elliptic projection estimate}
  h_\T^{-\alpha}\Lt{\phi-P_h\phi}+h_\T^{-\frac{\alpha}2}\LtD{\phi-P_h\phi}{\GaI}\ls\Ho{\phi-P_h\phi}=\inf_{v_h\in V_\T}\Ho{\phi-v_h},
  }
  where $\alpha:=\frac{\pi}{\omega_{\rm max}}=\min\limits_{1\leq j\leq N}\alpha_j$. 
  Since $b(u,v)$ is symmetric, it also holds that $b(v_h,P_h\phi)=b(v_h,\phi)\ \ \forall v_h\in V_\T$, and the similar Galerkin orthogonality.
In particular, we show the relationship between $a_\psi(\cdot,\cdot)$ and $b(\cdot,\cdot)$ for convenience,
\eq{\label{eq:a to b relationship}
  a_\psi(u,v)=b(u,v)-(k^2+1)(u,v)-k^2\vep\big(\abs{\psi}^2u,v\big)_{\Om_0}+\mbi k\langle u,v\rangle_{\GaI}.
}

For simplicity, denote by
\eq{\label{eq:t}
t(k,h):=k^{\frac12}h^{\frac{\alpha}2}+k^{\frac12+\alpha}h^\alpha+k^{\frac32}h^{\frac{1+\alpha}2}+k^2h^{1+\frac{\alpha}2}+k^{\frac32+\alpha}h^{2\alpha}+k^3h^{1+\alpha}.}
Noting that 
 \eqn{
  t(k,h)&=k^{\frac{1-2\alpha}{2(1+\alpha)}}(k^3h^{1+\alpha})^{\frac{\alpha}{2(1+\alpha)}}+k^{\frac{(2\alpha-1)(\alpha-1)}{2(1+\alpha)}}(k^3h_\T^{1+\alpha})^{\frac{\alpha}{1+\alpha}}+(k^3h^{1+\alpha})^{\frac12}\\
  &\quad +(kh)^{\frac12}(k^{\frac32}h^{\frac{1+\alpha}2})+k^{\frac{(2\alpha-1)(\alpha-3)}{2(1+\alpha)}}(k^3h^{1+\alpha})^{\frac{2\alpha}{1+\alpha}}+k^3h^{1+\alpha},
 }
 we know that
 \eq{\label{tkh}
 t(k,h)\ls 1 \quad\text{if}\quad k^3 h^{1+\alpha}\ls 1.}
  Moreover, $t(k,h)$ is sufficiently small on condition that $k^3 h^{1+\alpha}$ is small enough. 

The following lemma gives stability estimate in energy norm and the interior stability estimate in $L^{\infty}$ norm for $w_h$ on locally refined meshes, which are discrete versions of the estimates in \eqref{eq:auxiliary problem stability} for $w_\psi$. Since the triangulation in the current case is only locally quasi-uniform, the proof of $L^{\infty}$ interior stability estimate in \cite{Wu:Zou:2018} is not valid. 
\begin{lemma}\label{lem:interior-estimate}
  Suppose $k\vep\Linf{\psi}^2\leq \theta_0$ and $k^3h_\T^{1+\alpha}\leq C_0$. Then the solution of \eqref{eq:auxiliary dp1} satisfies
  \begin{equation}
    \He{w_h}\ls \MFg \qaq \Linf{w_h}\ls \lnhmin k^{-\frac12}\hMFg. 
  \end{equation}
  \begin{proof}
The proof of the first estimate is similar to that of \cite[Lemma 3.2]{Wu:Zou:2018} with the help of \eqref{eq:elliptic projection estimate} and \eqref{eq:singular interpolation error} due to corner singularities. We prove it in the appendix.

Next we prove the second estimate. First, by using the singularity decomposition of $w_\psi$ in Lemma~\ref{lem:auxiliary stability} and following the proof of \cite[Lemma~4.1]{Jiang:Li:Wu:Zou:2022}, which uses the modified duality argument and regards the second term in \eqref{eq:auxiliary dp1} as a small perturbation, we can derive the following error estimates in energy and $L^2$ norms (see also \cite[Theorem 5.5]{Chaumont-Frelet:Nicaise:2018}), under the condition that $k\vep\Linf{\psi}^2\le\theta_0$ and $k^3h_\T^{1+\alpha}\le C_0$,  the details are omitted. %to save space.
   \eq{\label{eq:auxiliary problem error estimate}
    \begin{split}
    \He{w_\psi-w_h}&\ls\big(kh_\T+k^{-\frac12}(kh_\T)^{\alpha}+k^3h_\T^2\big)\hMFg, \\ 
    \Lt{w_\psi-w_h}&\ls \big(k^{-1}(kh_\T)^{2\alpha}+k^2h_\T^2\big)\hMFg.
    \end{split}}
On the other hand, from \cite[Lemma 2.3]{Bramble:Xu:1991}, there holds \eq{\label{eq:xu Linf estimate}\Linf{w_h}\ls \lnhmin\HoD{w_h}{\Omz}.} Indeed, the result there can be easily verified for the non-quasi-uniform meshes by replacing $\abs{\ln h}^{\frac12}$ with $\lnhmin$.

To proceed, let $\zeta_h=w_h-P_hw_\psi$, then we have
\eq{\label{eq:Linf interior estimate}
  \Linf{w_h}\leq \Linf{w_\psi}+\Linf{w_\psi-I_h w_\psi}+\Linf{I_h w_\psi-P_hw_\psi}+\Linf{\zeta_h}.
}
The first term can be estimated by directly using \eqref{eq:auxiliary problem stability}. 
Noting that $w_\psi|_{\Omz}=w_R|_{\Omz}$ since $w_\psi=w_{R}+\sum_{j=1}^N C_k^jS_j$ and the support of $S_j(1\leq j\leq N)$ does not intersect $\Omz$, so the second term is estimated by using Lemma~\ref{lem:auxiliary stability} and the interpolation error estimate $\norm{w_R-I_hw_R}_{L^\infty(\Omz)}\ls h_\T\sHtD{w_R}{\Omz}$ (see, e.g., \cite[(4.4.8)]{Brenner:Scott:2008}):
% \eq{\label{eq:Linf interior estimate term2}
%   \begin{aligned}
%  \Linf{w_\psi-I_h w_\psi}&\leq\Linf{w_R-I_h w_R}+\sum_{j=1}^N C_k^j\norm{S_j-I_hS_j}_{L^\infty(\Omz)}\\
%   &\ls kh_\T\hMFg+k^{-\frac12}(kh_\T)^\alpha\MFg\\
%   &\ls k^{-\frac12}\big((k^3h_\T^2)^{\frac12}+(kh_\T)^\alpha\big)\hMFg.
%   \end{aligned}
% }
\eq{\label{eq:Linf interior estimate term2}
  \begin{aligned}
 \Linf{w_\psi-I_h w_\psi}&=\Linf{w_R-I_h w_R}\ls kh_\T\hMFg= k^{-\frac12}(k^3h_\T^2)^{\frac12}\hMFg.
  \end{aligned}
}
To estimate the third term, we apply \eqref{eq:xu Linf estimate}, \eqref{eq:elliptic projection estimate}, \eqref{eq:singular interpolation error}, and Lemma~\ref{lem:auxiliary stability} to obtain
\eq{\label{eq:Linf interior estimate term3a}
  \begin{aligned}
 \Linf{I_h w_\psi-P_hw_\psi}
  &\ls\lnhmin\HoD{I_h w_\psi-P_hw_\psi}{\Omz}\\
  &\ls \lnhmin\big(\HoD{w_R-I_h w_R}{\Omz}+\Ho{w_\psi-P_hw_\psi}\big)\\
 &\ls \lnhmin\Ho{w_\psi-I_h w_\psi}\\
  &\ls \lnhmin k^{-\frac12}\big((k^3h_\T^2)^{\frac12}+(kh_\T)^\alpha\big)\hMFg.
  \end{aligned}
}

It remains to estimate $\Linf{\zeta_h}$. By simple calculations, we find
\eqn{
  b(\zeta_h,v_h)=\big((k^2+1+2k^2\vep\oneo\abs{\psi}^2)(w_h-w_\psi)+k^2\vep\oneo\psi^2(\overline {w_h}-\overline {w_\psi}),v_h\big)+\mbi k\langle w_\psi-w_h,v_h\rangle_{\GaI} \ \ \forall v_h\in V_\T.
  }
Then we define $\zeta\in \HoG$ as the solution of the following variational problem:
  \eq{\label{eq:zeta definition}
   \begin{aligned}
    b(\zeta,v)=\big((k^2+1+2k^2\vep\oneo\abs{\psi}^2)(w_h-w_\psi)&+k^2\vep\oneo\psi^2(\overline{w_h}-\overline{w_\psi}),v\big)\\
    &\ +\mbi k\langle w_\psi-w_h,v\rangle_{\GaI} \ \ \forall v\in \HoG.
   \end{aligned}
  }
  It is clear that $\zeta_h=P_h\zeta$. By the singularity decomposition of the elliptic equations in \cite{Grisvard:1985}, we derive $\zeta=\zeta_R+\sum_{j=1}^Nc_jS_j$, and
  \eq{\label{eq:zeta decomposition estimate}
    \begin{aligned}
      \Ht{\zeta_R}+\sum_{j=1}^N\abs{c_j}&\ls \big\Vert(k^2+1+k^2\vep\oneo\abs{\psi}^2)(w_\psi-w_h)\big\Vert_0+k\norm{w_\psi-w_h}_{\frac12,\Gamma}\\
      &\ls \big(k^2(k^{-1}(kh_\T)^{2\alpha}+k^2h^2_\T)+k(kh_\T+k^{-\frac12}(kh_\T)^{\alpha}+k^3h^2_\T)\big)\hMFg\\
      &\ls (k^4h^2_\T+k^2h_\T+k^{\frac12}(kh_\T)^{\alpha})\hMFg,
  \end{aligned}
  }
  where we use \eqref{eq:auxiliary problem error estimate} and the trace inequality. Then taking \eqref{eq:zeta decomposition estimate} into \eqref{eq:elliptic projection estimate} and using \eqref{eq:singular interpolation error} we have
  \eq{\label{eq:zeta error estimate}
    \Lt{\zeta-\zeta_h}+h_\T^{\alpha}\Ho{\zeta-\zeta_h}\ls h^{2\alpha}_{\T}(k^4h^2_\T+k^2h_\T+k^{\frac12}(kh_\T)^{\alpha})\hMFg.
  }
  Like the analyses in \eqref{eq:Linf interior estimate term2} and \eqref{eq:Linf interior estimate term3a}, we get
  \eq{\label{eq:Linf interior estimate term3}
  \begin{aligned}
    \Linf{\zeta-\zeta_h}&\ls \Linf{\zeta-I_h \zeta}+\Linf{\zeta_h-I_h\zeta}\\
&\ls h_\T\sHt{\zeta_R}+\lnhmin\big(\Ho{\zeta-\zeta_h}+\Ho{\zeta-I_h \zeta}\big)\\
&\ls h_\T\sHt{\zeta_R}+\lnhmin\Ho{\zeta-I_h \zeta}\\
&\ls h^{\alpha}_{\T}(k^4h^2_\T+k^2h_\T+k^{\frac12}(kh_\T)^{\alpha})(1+\lnhmin)\hMFg\\
&\le k^{-\frac12}\big(k^\frac32 h_\T+2k^{-\frac12}\big)t(k,h)(1+\lnhmin)\hMFg.
  \end{aligned}
  }
  After rewriting \eqref{eq:zeta definition} as: 
  \eqn{
    a_{0}(\zeta,v)-k^2\vep\big(2\abs{\psi}^2\zeta+\psi^2\bar\zeta,v\big)_{\Omz}&= \big((k^2+1+2k^2\vep\oneo\abs{\psi}^2)(P_h w_\psi - w_{\psi} + \zeta_h - \zeta),v\big)\\
    &\quad+k^2\vep\big(\vert\psi\vert^2\overline{(P_h w_\psi-w_{\psi}+\zeta_h-\zeta)},v\big)_{\Omz}\\
    &\quad+\mbi k\langle w_{\psi} - P_h w_{\psi}+\zeta-\zeta_h,v\rangle_{\GaI} \quad \forall v\in \HoG,
  }
  it follows from Lemma~\ref{lem:auxiliary stability}, \eqref{eq:elliptic projection estimate}, \eqref{eq:zeta error estimate}, the trace inequality, and the condition $k^3h_\T^{1+\alpha}\leq C_0$ that
  \eq{\label{eq:Linf interior estimate term4}
  \begin{aligned}
\Linf{\zeta}&\ls k^{-\frac12}\big(k^2\Lt{P_h w_\psi - w_{\psi} + \zeta_h - \zeta}+k\LtD{P_h w_\psi - w_{\psi} + \zeta_h - \zeta}{\GaI}\big)\\
&\ls k^{-\frac12}\big(k^2h_\T^\alpha(kh_\T+k^{-\frac12}(kh_\T)^\alpha)+k^2h^{2\alpha}_\T(k^4h^2_\T+k^2h_\T+k^{\frac12}(kh_\T)^{\alpha})\\
&\quad+kh_\T^{\frac{\alpha}2}(kh_\T+k^{-\frac12}(kh_\T)^\alpha)+kh^{\frac{3\alpha}2}_\T(k^4h^2_\T+k^2h_\T+k^{\frac12}(kh_\T)^{\alpha})\big)\hMFg\\
& \ls k^{-\frac12}\big(t(k,h)+t^2(k,h)\big)\hMFg.
  \end{aligned}
  }
  Finally, the result comes from combining \eqref{eq:Linf interior estimate}, \eqref{eq:auxiliary problem stability}, \eqref{eq:Linf interior estimate term2}, \eqref{eq:Linf interior estimate term3}, \eqref{eq:Linf interior estimate term4}, \eqref{tkh}, and the fact $\frac12<\alpha\le 1$.
  \end{proof}
\end{lemma}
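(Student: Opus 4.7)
My plan is to view the discrete auxiliary problem as a small perturbation of the discrete pure Helmholtz problem, and to use the continuous solution $w_\psi$ of the auxiliary problem (Lemma~\ref{lem:auxiliary stability}) together with its elliptic projection $P_h w_\psi$ as bridges to the fully discrete solution $w_h$. Throughout, the assumptions $k\vep\Linf{\psi}^2\le\theta_0$ and $k^3 h_\T^{1+\alpha}\le C_0$ will be used to absorb perturbation terms and to guarantee that the discretization is in the asymptotic regime where the FEM is stable.

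For the energy bound, I would first derive an a priori error estimate for $w_\psi-w_h$ in the $L^2$ and energy norms by the modified duality argument familiar from linear high-frequency Helmholtz FEM, treating the $\psi$-dependent term as a perturbation that, thanks to the smallness of $\theta_0$, can be absorbed into the left-hand side. The singularity decomposition in Lemma~\ref{lem:auxiliary stability}, together with the interpolation estimates \eqref{eq:singular interpolation error} and the projection estimate \eqref{eq:elliptic projection estimate}, handles the reduced regularity at the reentrant corners. Combining this error bound with the continuous stability $\He{w_\psi}\ls\MFg$ and the triangle inequality then yields $\He{w_h}\ls\MFg$.

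The core of the lemma is the interior $L^\infty$ bound. The strategy is the four-term splitting
\begin{equation*}
  \Linf{w_h} \le \Linf{w_\psi} + \Linf{w_\psi - I_h w_\psi} + \Linf{I_h w_\psi - P_h w_\psi} + \Linf{w_h - P_h w_\psi}.
\end{equation*}
The first term is controlled by Lemma~\ref{lem:auxiliary stability}. For the second, the singular components $S_j$ of $w_\psi$ vanish identically on $\Omz$ by construction of the cutoff $\chi$, so only the regular part $w_R$ survives and is bounded by the standard $H^2\hookrightarrow L^\infty$ interpolation estimate on the quasi-uniform portion of the mesh inside $\Omz$. For the third, I invoke the logarithmic inverse-type estimate $\Linf{v_h}\ls\lnhmin\HoD{v_h}{\Omz}$ (the analogue for locally quasi-uniform meshes of the Bramble--Xu estimate, with $|\ln h|^{1/2}$ replaced by $\lnhmin$) to convert the $L^\infty$ bound into an $H^1$-bound, which is then treated via \eqref{eq:elliptic projection estimate} and \eqref{eq:singular interpolation error}.

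The main obstacle is the fourth term $\Linf{w_h - P_h w_\psi}$. I set $\zeta_h:=w_h-P_h w_\psi$ and lift it to a continuous auxiliary function $\zeta\in\HoG$ defined by an elliptic problem $b(\zeta,v)=R(v)$ whose data $R$ encodes the previously derived a priori errors $w_\psi-w_h$ (both in $L^2(\Omega)$ and, via the trace, on $\GaI$). By construction $P_h\zeta=\zeta_h$, so I split once more $\Linf{\zeta_h}\le\Linf{\zeta}+\Linf{\zeta-\zeta_h}$. To bound $\Linf{\zeta}$, I rewrite its defining equation as a Helmholtz-type equation of the form \eqref{eq:auxiliary cp} with a modified right-hand side, so that Lemma~\ref{lem:auxiliary stability} applies and yields the crucial gain of a factor $k^{-1/2}$. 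For $\Linf{\zeta-\zeta_h}$, I decompose $\zeta=\zeta_R+\sum_j c_j S_j$ using the classical elliptic corner theory of \cite{Grisvard:1985}, then reuse the same three-term strategy as for $w_\psi$ itself. The hardest part is bookkeeping: every resulting power of $k$ and $h$ must be collected into the polynomial $t(k,h)$ of \eqref{eq:t}, which is $\ls 1$ under $k^3 h_\T^{1+\alpha}\le C_0$ by \eqref{tkh}. Keeping this tracking uniform in $k$, while simultaneously handling corner singularities at both the discrete and auxiliary continuous levels, is where the argument becomes delicate.
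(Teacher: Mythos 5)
Your treatment of the interior $L^\infty$ estimate is essentially identical to the paper's argument: the same four-term splitting $\Linf{w_h}\le\Linf{w_\psi}+\Linf{w_\psi-I_hw_\psi}+\Linf{I_hw_\psi-P_hw_\psi}+\Linf{\zeta_h}$, the same use of the Bramble--Xu logarithmic bound on $\Omz$, the same lifting of $\zeta_h=w_h-P_hw_\psi$ to a continuous $\zeta$ with $P_h\zeta=\zeta_h$, the Grisvard decomposition of $\zeta$, and the rewriting of the defining equation for $\zeta$ as a problem of the form \eqref{eq:auxiliary cp} to extract the $k^{-1/2}$ gain from Lemma~\ref{lem:auxiliary stability}.

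However, your route to the first (energy) estimate contains a genuine gap. You propose to first prove the a~priori error bound \eqref{eq:auxiliary problem error estimate} and then deduce $\He{w_h}\ls\MFg$ by the triangle inequality from $\He{w_\psi}\ls\MFg$. But the error estimate necessarily carries the data functional $\hMFg=\MFg+k^{-1}\norm{g}_{\frac12,\GaI}$ on the right-hand side, because its proof rests on the $H^2$ regularity of the regular part $w_R$, whose bound $\Ht{w_R}\ls k\hMFg$ in Lemma~\ref{lem:auxiliary stability} cannot be replaced by one in terms of $\MFg$ alone. Hence the triangle inequality only gives $\He{w_h}\ls\MFg+t(k,h_\T)\hMFg\ls\hMFg$, which is strictly weaker than the asserted $\He{w_h}\ls\MFg$ when $g\neq 0$. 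The paper avoids this by proving the discrete stability directly (see Appendix~A): testing with $v_h=w_h$, then running a duality argument against a problem with data $F=\overline{w_h}$, $g=0$, so that the duality solution $z$ is controlled entirely by $\Lt{w_h}$ rather than by $\hMFg$. That argument is not a corollary of the error estimate, and in fact logically precedes it. Your plan should incorporate that separate direct stability proof; the $L^2$/energy error estimate you propose can then be derived afterwards, exactly as in the paper.
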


Then for the Newton's iterative sequences $\{u_h^l\}$, we argue as in the proof of Lemma \ref{lem:iteration stability} to have
\begin{lemma}\label{lem:dp iteration stability}
  If $k\vep\Linf{u_h^0}^2\leq \lnhminn\vep\hMfg^2\leq\theta_0$  and $k^3h_\T^{1+\alpha}\leq C_0$, then
  \begin{equation}
    \He{u_h^l} \ls \Mfg \qaq \Linf{u_h^l}\ls \lnhmin k^{-\frac12}\hMfg \quad
    \mbox{for} ~~l=1,2,\ldots
    \end{equation}
\end{lemma}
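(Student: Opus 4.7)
The plan is to imitate the proof of Lemma~\ref{lem:iteration stability} essentially verbatim, but with Lemma~\ref{lem:auxiliary stability} replaced by Lemma~\ref{lem:interior-estimate} throughout. Concretely, I would perform induction on $l$, and at each step take $\psi=u_h^{l-1}$ in the auxiliary discrete problem \eqref{eq:auxiliary dp1}, since \eqref{eq:iteration dp2} is exactly \eqref{eq:auxiliary dp1} with $\psi=u_h^{l-1}$, $F=f_h^{l-1}$, and solution $w_h=u_h^l$. The hypothesis $k^3h_\T^{1+\alpha}\le C_0$ of Lemma~\ref{lem:interior-estimate} is built into the statement, so only the smallness condition $k\vep\Linf{\psi}^2\le\theta_0$ and the required control $M(f_h^{l-1},g)\ls \Mfg$, $\widehat M(f_h^{l-1},g)\ls \hMfg$ need verification at each induction step.

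For the verification, fix the hidden constants $C_t,C_\infty$ from Lemma~\ref{lem:interior-estimate} (absorbing the $\lnhmin$ factor into $C_\infty$ formally), let $C_t^0,C_\infty^0$ be the constants in the initial-guess assumption, and introduce $\wt C_t=\max\{2C_t,C_t^0\}$, $\wt C_\infty=\max\{2C_\infty,C_\infty^0\}$ as in the proof of Lemma~\ref{lem:iteration stability}. The induction hypothesis reads $k\|u_h^{l-1}\|_{0,\Omz}\le\wt C_t\Mfg$ and $\Linf{u_h^{l-1}}\le\lnhmin k^{-\frac12}\wt C_\infty\hMfg$, which holds for $l=1$ by assumption. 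Then
\eqn{
k\vep\Linf{u_h^{l-1}}^2\le\wt C_\infty^2\,\lnhminn\vep\hMfg^2\le \wt C_\infty^2\,\theta_0,
}
which is $\le\theta_0$ of Lemma~\ref{lem:interior-estimate} provided the generic constant $\theta_0$ in the statement is chosen small relative to $\wt C_\infty^2$. Moreover
\eqn{
\Lt{f_h^{l-1}-f}\le 2k^2\vep\Linf{u_h^{l-1}}^2\|u_h^{l-1}\|_{0,\Omz}\le 2\wt C_\infty^2\wt C_t\,\lnhminn\vep\hMfg^2\cdot\Mfg\le 2\wt C_\infty^2\wt C_t\,\theta_0\Mfg,
}
so $M(f_h^{l-1},g)\le(1+2\wt C_\infty^2\wt C_t\theta_0)\Mfg\le 2\Mfg$ and analogously $\widehat M(f_h^{l-1},g)\le 2\hMfg$, as long as $\theta_0$ is chosen small enough (independent of $k,\vep,h,f,g$).

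Applying Lemma~\ref{lem:interior-estimate} to \eqref{eq:iteration dp2} viewed as an instance of \eqref{eq:auxiliary dp1} therefore yields
\eqn{
\He{u_h^l}\le C_t M(f_h^{l-1},g)\le 2C_t\Mfg\le\wt C_t\Mfg,\quad \Linf{u_h^l}\le\lnhmin k^{-\frac12}C_\infty\widehat M(f_h^{l-1},g)\le\lnhmin k^{-\frac12}\wt C_\infty\hMfg,
}
closing the induction and giving the claimed bounds. The main, and essentially only, technical point is the combinatorial/algebraic bookkeeping: because the discrete $L^\infty$ estimate in Lemma~\ref{lem:interior-estimate} carries an extra $\lnhmin$ factor compared to the continuous estimate in Lemma~\ref{lem:auxiliary stability}, the square that arises from $\Linf{u_h^{l-1}}^2$ in the nonlinear perturbation produces a $\lnhminn$ factor, and this is exactly the reason the standing smallness assumption is strengthened from $\vep\Mfg^2\le\theta_0$ (in Lemma~\ref{lem:iteration stability}) to $\lnhminn\vep\hMfg^2\le\theta_0$ here. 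No further new ideas are needed; the details mirror the continuous proof with this single bookkeeping adjustment, and as in the original we omit them.
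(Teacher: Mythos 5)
Your proposal is correct and is essentially the argument the paper intends: the paper states Lemma~\ref{lem:dp iteration stability} without proof, merely remarking that one should ``argue as in the proof of Lemma~\ref{lem:iteration stability}'', and your induction with $\psi=u_h^{l-1}$ in \eqref{eq:auxiliary dp1} and Lemma~\ref{lem:interior-estimate} in place of Lemma~\ref{lem:auxiliary stability} is exactly that argument, with the correct bookkeeping showing that the extra $\lnhmin$ factor in the discrete $L^\infty$ estimate forces the strengthened smallness condition $\lnhminn\vep\hMfg^2\le\theta_0$. One small caveat worth noting: closing the base case $l=1$ requires an $L^2$ bound $k\|u_h^0\|_{0,\Omz}\ls\Mfg$ on the initial guess (as in \eqref{u0} for the continuous iterate), which is used in bounding $\Lt{f_h^0-f}$ but is not explicitly among the stated hypotheses of Lemma~\ref{lem:dp iteration stability}; you correctly assume it, and this is a minor imprecision inherited from the paper's own statement rather than a flaw in your argument.
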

Using the convergence of the iteration sequence, we can get:
\begin{theorem}\label{thm:dpstab}
 If $\lnhminn\vep\hMfg^2\leq\theta_0$ and $k^3h_\T^{1+\alpha}\leq C_0$,
  then \eqref{eq:FEMva} attains a unique solution $u_h$ satisfying the following stability estimates
  \eq{\label{eq:dpstab}
    \He{u_h}\ls \Mfg \qaq \Linf{u_h}\ls \lnhmin k^{-\frac12}\hMfg.}
    Moreover, there holds the error estimates
  \eq{\label{eq:error estimate}
  \begin{aligned}
    \He{u-u_h}&\ls(kh_\T+k^{-\frac12}(kh_\T)^{\alpha}+k^3h_\T^2)\hMfg,\\
    k\Lt{u-u_h}&\ls \big((kh_\T)^{2\alpha}+k^3h_\T^2\big)\hMfg.
  \end{aligned}
    }
\end{theorem}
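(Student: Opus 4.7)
The plan is to mirror the continuous Newton-iteration proof of Theorem \ref{thm:cpstab} in the discrete setting, replacing Lemma \ref{lem:auxiliary stability} by its discrete counterpart Lemma \ref{lem:interior-estimate} and Lemma \ref{lem:iteration stability} by Lemma \ref{lem:dp iteration stability}. For the error estimates, I will reinterpret $u$ as the solution of an auxiliary linearized problem of the form \eqref{eq:auxiliary cp} with $\psi=u$ and compare $u_h$ with the discrete solution of the same linearization, so that the sharp linear rates \eqref{eq:auxiliary problem error estimate} transfer directly.

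I first run the discrete Newton iteration \eqref{eq:iteration dp2} with the trivial initial guess $u_h^0=0$, which satisfies the hypotheses of Lemma \ref{lem:dp iteration stability} and hence yields $\He{u_h^l}\ls\Mfg$ and $\Linf{u_h^l}\ls\lnhmin k^{-1/2}\hMfg$ uniformly in $l$. Setting $v_h^l:=u_h^{l+1}-u_h^l$ and subtracting two consecutive equations \eqref{eq:iteration dp2}, I obtain a discrete linearized equation for $v_h^l$ whose right-hand side is quadratic in $v_h^{l-1}$. Applying Lemma \ref{lem:interior-estimate} with $\psi=u_h^l$ (admissible since $k\vep\Linf{u_h^l}^2\ls\lnhminn\vep\hMfg^2\le\theta_0$) and invoking the $L^\infty$ bound of Lemma \ref{lem:dp iteration stability} to estimate the right-hand side produces the contraction
\eqn{
\He{v_h^l}\ls \lnhminn\vep\hMfg^2\,\He{v_h^{l-1}}.
}
Under the smallness hypothesis $\lnhminn\vep\hMfg^2\le\theta_0$ the sequence $\{u_h^l\}$ is therefore Cauchy in the energy norm; its limit $u_h$ solves \eqref{eq:FEMva}, and the bounds \eqref{eq:dpstab} pass to the limit. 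Uniqueness within the class of solutions obeying \eqref{eq:dpstab} follows by applying the same contraction estimate to the difference of two such solutions.

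For the error estimates, observe that the exact $u$ satisfies \eqref{eq:auxiliary cp} with $\psi=u$ and source $\wt F:=f+2k^2\vep\oneo|u|^2 u$, and that $M(\wt F,g)\ls(1+\vep\Mfg^2)\Mfg\ls\Mfg$. Let $\wt w_h\in V_\T$ denote the discrete solution of the corresponding linearized problem \eqref{eq:auxiliary dp1}. Directly applying the rates \eqref{eq:auxiliary problem error estimate} established inside the proof of Lemma \ref{lem:interior-estimate} gives the desired bounds for $\He{u-\wt w_h}$ and $\Lt{u-\wt w_h}$. It remains to control $\wt w_h-u_h\in V_\T$, which satisfies a discrete linearized equation whose right-hand side is the cubic residual $k^2\vep(|u|^2u-|u_h|^2u_h,v_h)_{\Omz}$ together with correction terms of the same structure. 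Expanding the cubic difference into linear factors in $u-u_h$, bounding the remaining $L^\infty$ factors by $\lnhmin k^{-1/2}\hMfg$ via \eqref{eq:cpstab} and \eqref{eq:dpstab}, and reapplying Lemma \ref{lem:interior-estimate}, I expect
\eqn{
\He{\wt w_h-u_h}+k\Lt{\wt w_h-u_h}\ls \lnhminn\vep\hMfg^2\,\big(\He{u-u_h}+k\Lt{u-u_h}\big),
}
so that the triangle inequality together with the smallness hypothesis absorbs this contribution into the left-hand side, delivering both estimates in \eqref{eq:error estimate}.

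The main obstacle is to ensure that the logarithmic factor $\lnhmin$, which newly appears in the discrete $L^\infty$-control (Lemma \ref{lem:interior-estimate}) and which is absent from the continuous analogue (Lemma \ref{lem:auxiliary stability}), propagates only \emph{linearly} through both the contraction and the nonlinear residual; this is precisely what allows $\lnhminn\vep\hMfg^2\le\theta_0$ to suffice rather than a more restrictive hypothesis. A secondary subtlety is to split the cubic residual $|u|^2 u-|u_h|^2 u_h$ so that the contribution controlled by $k\Lt{u-u_h}$ is separated from the contribution controlled by $\He{u-u_h}$; this separation is what preserves the sharp $(kh_\T)^{2\alpha}$ rate in the $L^2$ estimate rather than degrading it to the coarser energy rate.
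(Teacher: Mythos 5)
Your stability argument tracks the paper essentially verbatim (the paper simply says to follow the continuous proof of Theorem \ref{thm:cpstab} and omits the details), but your treatment of the error estimates \eqref{eq:error estimate} is a genuinely different route. The paper compares $u$ and $u_h$ \emph{iteratively}: for each Newton index $l$ it introduces the intermediate discrete function $\hat u_h^l$ (the discrete solution of the linearization at the \emph{continuous} iterate $u^{l-1}$), bounds $u^l-\hat u_h^l$ by \eqref{eq:auxiliary problem error estimate}, sets up a two-term recursion for $\hat u_h^l-u_h^l$, and passes to the limit $l\to\infty$. You instead use a single, post-hoc comparison: observe that the exact $u$ solves the linearized auxiliary problem \eqref{eq:auxiliary cp} with $\psi=u$ (the source is $\wt F=f-2k^2\vep\oneo|u|^2 u$, not $f+2k^2\vep\oneo|u|^2u$ as you wrote; the sign is wrong but immaterial for norm bounds), take $\wt w_h$ to be the corresponding discrete solution from \eqref{eq:auxiliary dp1}, apply \eqref{eq:auxiliary problem error estimate} to $u-\wt w_h$, and then absorb $\wt w_h-u_h$. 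This avoids the induction and the limit process and is arguably cleaner. One refinement is needed, which you correctly flag at the end: the stated inequality $\He{\wt w_h-u_h}+k\Lt{\wt w_h-u_h}\ls \lnhminn\vep\hMfg^2\big(\He{u-u_h}+k\Lt{u-u_h}\big)$ would only yield the coarser energy rate for $k\Lt{u-u_h}$ after absorption. The resolution is actually easier than the ``splitting'' you describe: once you expand the residual as you indicate, it comes out \emph{entirely} as a sum of terms linear in $u-u_h$ multiplied by $L^\infty(\Omz)$ coefficients bounded by $\lnhminn k^{-1}\hMfg^2$, so Lemma \ref{lem:interior-estimate} gives $\He{\wt w_h-u_h}\ls \lnhminn\vep\hMfg^2\, k\Lt{u-u_h}$ with no energy-norm term on the right at all. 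Absorbing this into the $L^2$ triangle inequality first preserves the sharp $(kh_\T)^{2\alpha}$ rate, and plugging that back into the energy triangle inequality (using $(kh_\T)^{2\alpha}\ls k^{-1/2}(kh_\T)^{\alpha}$ under $k^3h_\T^{1+\alpha}\le C_0$) yields the energy rate. With these two small corrections your argument is sound.
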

\begin{proof}
  \eqref{eq:dpstab} can be directly obtained from the convergence of iteration methods, which can be proved by following the proof of Theorem~\ref{thm:cpstab}. We omit the details for simplicity.
  The proof of \eqref{eq:error estimate} is a little more complex but it is still based on the convergences of $\{u^l\}$ and $\{u^l_h\}$.

  We define $\hat u_h^0=u_h^0$ and $\hat u_h^l\in V_\T (l=1,2,\ldots)$ to be the solution to the following problem
  \eqn{
    a_0(\hat u^l_h, v_h)-k^2\vep\big(2\abs{u^{l-1}}^2\hat u^l_h+(u^{l-1})^2\overline{\hat u^l_h},v_h\big)_{\Omz}=(f^{l-1}, v_h)+\langle g, v_h\rangle_{\GaI}\quad \forall v_h\in V_\T.
  }
  Then from \eqref{eq:auxiliary problem error estimate} and \eqref{eq:fl-1}, we have
  \eq{\label{eq:dpstab proof1}
  \begin{aligned}
    \He{u^l-\hat u^l_h}&\ls(kh_\T+k^{-\frac12}(kh_\T)^{\alpha}+k^3h_\T^2)\hMfg,\\
        k\Lt{u^l-\hat u^l_h}&\ls \big((kh_\T)^{2\alpha}+k^3h_\T^2\big)\hMfg.
  \end{aligned}
  }
  Next we need to estimate $\hat \eta^l_h:=\hat u^l_h-u^l_h$. Through simple calculation, we find $\hat\eta_h^l$ satisfies
  \eqn{
    a_0(\hat \eta_h^l,v_h)-k^2\vep\big(2\abs{u^{l-1}}^2\hat\eta_h^l+(u^{l-1})^2\overline{\hat\eta_h^l},v_h\big)_{\Omz}&=k^2\vep\big(2\big(\abs{u^{l-1}}^2-\abs{u^{l-1}_h}^2\big)u^l_h+\big((u^{l-1})^2-(u^{l-1}_h)^2\big)\overline{u^l_h},v_h\big)_{\Omz}\\
    &\quad + 2k^2\vep\big(\abs{u^{l-1}_h}^2u^{l-1}_h-\abs{u^{l-1}}^2u^{l-1},v_h\big)_{\Omz}\qquad \forall v_h\in V_\T.
  }
  It is shown from Lemmas~\ref{lem:interior-estimate}, \ref{lem:iteration stability}, and \ref{lem:dp iteration stability} that
  \eqn{
    \He{\hat \eta^l_h}&\ls k^2\vep\LtD{\big(\abs{u^{l-1}}^2-\abs{u^{l-1}_h}^2\big)(u^l_h-u^{l-1}_h)+\big((u^{l-1})^2-(u^{l-1}_h)^2\big)\overline{u^l_h}+\abs{u^{l-1}}^2(u^{l-1}_h-u^{l-1})}{\Omz}\\
    &\ls k\vep\lnhminn\hMfg^2\Lt{u^{l-1}-u^{l-1}_h}.
  }
  So when $\vep\lnhminn\hMfg^2$ is sufficiently small, we have
  \eq{\label{eq:hatetahl contraction}
    \He{\hat \eta^l_h}\leq \frac12\He{u^{l-1}-\hat u^{l-1}_h}+\frac12\He{\hat\eta^{l-1}_h}\qaq \Lt{\hat \eta^l_h}\leq \frac12\Lt{u^{l-1}-\hat u^{l-1}_h}+\frac12\Lt{\hat\eta^{l-1}_h},
   }
    and then
  \eq{\label{eq:dpstab proof2}
    \He{\hat \eta^l_h}\leq \sum_{j=0}^{l-1}2^{j-l}\he{u^j-\hat u^j_h}\ls(kh_\T+k^{-\frac12}(kh_\T)^{\alpha}+k^3h_\T^2)\hMfg+2^{-l}\He{u^0-u^0_h}.
  }
  Combining \eqref{eq:dpstab proof1} and \eqref{eq:dpstab proof2}, we derive
  \eqn{
    \He{u^l-u^l_h}\ls (kh_\T+k^{-\frac12}(kh_\T)^{\alpha}+k^3h_\T^2)\hMfg+2^{-l}\He{u^0-u^0_h}.
  }
  Similarly,
   \eqn{
    k\Lt{u^l-u^l_h}\ls \big((kh_\T)^{2\alpha}+k^3h_\T^2\big)\hMfg+2^{-l}\Lt{u^0-u^0_h}.
  } 
  Then \eqref{eq:error estimate} comes from letting $l\rightarrow \infty$ in the above two estimates. This completes the proof of the theorem.
\end{proof}

\begin{remark}
  {\rm (i)}
  Denote by $\T_0$ the initial mesh. $\lnhminn/ \abs{\ln h_{\T_0}}\eqsim$ the maximum number of times the mesh is refined locally, that is to say, although the condition here is more stringent than the one in \cite{Jiang:Li:Wu:Zou:2022} ($\vep\hMfg^2\leq \theta_0$ for the impedance boundary condition),
  it is also acceptable. In addition, to the best of our knowledge, nothing is known about a $L^{\infty}$ interior estimate of $u_h$ for the NLH problem on a shape regular mesh, especially a locally refined mesh. 
  Moreover, another technique for analysing the case of a shape regular mesh is directly using the $H^{1}$ global estimate instead of the $L^{\infty}$ interior estimate and requires more stringent conditions (see, e.g. \cite{Maier:Verfurth:2022}).

  % {\rm (ii)} The quadratic convergence of Newton's iteration when the initial guess is sufficiently close to the exact solution is proved in \cite{Jiang:Li:Wu:Zou:2022}, where the boundary condition is the trucated PML. For the impedance boundary condition, the proof is similar and the resulting quadratic convergence is valid for the high-energy solution.
  {\rm (ii)} Similar to Remark~\ref{rm:multiple solutions} {\rm (i)}, Theorem~\ref{thm:dpstab} states that if $\lnhminn\vep\hMfg^2\leq\theta_0$ and $k^3h_\T^{1+\alpha}\leq C_0$, FEM \eqref{eq:FEMva} attains a unique low-energy solution $u_h$ and $u_h$ approximates the low-energy solution $u$ to the NLH problem \eqref{eq:NLHva}. 
  Like the proof in \cite{Jiang:Li:Wu:Zou:2022}, we can prove the local quadratic convergence of Newton's iteration \eqref{eq:iteration dp2} which also holds for the high-energy solutions. The details are omitted.
\end{remark}

\section{{\it A posteriori} error estimates}\label{s:3}
In this section, we derive upper and lower bounds for the FEM and the elliptic projection and establish some equivalent relationships between them which is of significance for proving the convergence and quasi-optimality.

\subsection{Error estimators}\label{s:error estimators}
Denote by $\E$ the set of all edges of the elements in the triangulation $\T$ and $\E^{I}$ the set of all interior edges in $\T$.
Then, for any $e\in\E$, we define $\Omega_e:=\cup \{T\in \T:\; e\subset \partial T\}$ and for any function $v\in H^1(\Om)$, we let the jump of $\na v$ across an interior edge $e=T\cap T'\in\E^I$ be $\jump{\na v}:=\na v|_{T} \cdot n_{T}+\na v|_{T'} \cdot n_{T'}$,
where  $n_{T}$ and $n_{T'}$ denote the  unit outward normal on $\partial T$ and $\partial T'$. In addition, we define $\Sigma_{\T}(T):=\{T^{'}\in\T:T^{'}\cap T\in\E\}$ to be the set of elements having common edge with $T$ and let $\Omega_{\T}(T):=\bigcup_{T'\in\Sigma_\T(T)}T'$.

For any $v_h\in V_\T$, the element residuals, jump residuals, error estimators and oscillations for $v_h$ regarded as an approximation to the NLH problem \eqref{eq:Helm}--\eqref{eq:Dir} are defined as follows:
\eqn{
  \begin{aligned}
    R_T(v_h):&=\big(f+k^2(1+\vep\oneo\abs{v_h}^2)v_h\big)|_T,\ T\in\T,\\
    R_e(v_h):&=\begin{cases}
      -\frac12[\![\na v_h]\!], & e\subset\Om,\\
      -\frac{\partial v_h}{\partial n}-ikv_h+g, & e\subset\GaI,\\
      0, & e\subset\GaD
    \end{cases} \quad R_{\partial T}|_e:=R_e,\ \forall e\subset\partial T,\\
    \eta_T^2(v_h):&=\sum_{T'\in\Sigma_{\T}(T)}h_{T'}^2\LtD{R_{T'}(v_h)}{T'}^2+h_T\LtD{R_\pT(v_h)}{\partial T}^2,\\
    {\rm osc}_T^2(v_h):&=h_{T}^2\Ltd{R_{T}(v_h)-\wideBar{R_{T}(v_h)}}{T}^2+h_T\Ltd{R_\pT(v_h)- \wideBar{R_\pT(v_h)}}{\pT}^2\\
    &=h_{T}^2\Ltd{f-\wideBar{f}}{T}^2+h_T\LtD{g- \wideBar g}{\pT\cap \GaI}^2,
  \end{aligned}
}
where $\wideBar v|_T$ is the $L^2(T)$ projection of $v|_T$ onto $P_{3}(T)$, and $\wideBar v|_e$ is the $L^2(e)$ projection of $v|_e$ onto $P_{1}(e)$.

The elliptic projection $P_hu$ for the solution $u$ of \eqref{eq:NLHva} can be seen as the FE discretization of the elliptic problem:
\eqn{%\label{eq:elliptic projection}
  \left\{\begin{aligned}
    -\Delta \tilde u+\tilde u&=f+\big(k^2(1+\vep\oneo\abs{u}^2)+1\big)u \ \ \text{in} \ \Om, \\
    \frac{\partial \tilde u}{\partial n}&=-iku+g \ \ \text{on} \ \GaI;\quad \tilde u=0 \ \ \text{on} \ \GaD.
  \end{aligned}\right.
}
% Thus by standard posteriori error estimates for the elliptic problem \eqref{eq:elliptic projection}, the corresponding estimators for $v_h$ can be stated:
Thus, the corresponding error estimators for $v_h$ can be stated:
\eq{\label{eq:wtestimators}
  \begin{aligned}
    \wt R_T(v_h)&:=\big(f+\big(k^2(1+\vep\oneo\abs{u}^2)+1\big)u-v_h\big)|_T,\ T\in\T,\\
    \wt R_e(v_h)&:=\begin{cases}
      -\frac12[\![\na v_h]\!], & e\subset\Om,\\
      -\frac{\partial v_h}{\partial n}-iku+g, & e\subset\GaI,\\
      0, &e\subset\GaD
    \end{cases} \quad \widetilde R_{\partial T}|_e:=\widetilde R_e,\ \forall e\subset\partial T,\\
    \wt\eta_T^2(v_h)&:=\sum_{T'\in\Sigma_{\T}(T)}h_{T'}^2\Ltd{\widetilde{R}_{T'}(v_h)}{T'}^2+h_T\Ltd{\widetilde{R}_\pT(v_h)}{\partial T}^2,\\
    \wt\osc_T^2(v_h)&:=h_{T}^2\Ltd{\wt R_{T}(v_h)-\wideBar{\wt R_{T}(v_h)}}{T}^2+h_T\Ltd{\wt R_\pT(v_h)- \wideBar{\wt R_\pT(v_h)}}{\partial T}^2.
  \end{aligned}
}
For any submesh $\M\subset$ of $\T$, we denote $\wt u_h:=P_hu$, and
\begin{align*}
    &R_T:=R_T(u_h),\ R_e:=R_e(u_h),\ \eta_T:=\eta_T(u_h),\ \osc_T:=\osc_T(u_h),\\%\label{RT}\\
    %&\widetilde R_T:=\widetilde R_T(\widetilde u_h),\ \widetilde R_e:=\widetilde R_e(\widetilde u_h),\ \widetilde \eta_T:=\widetilde \eta_T(\widetilde u_h),\ \widetilde \osc_T:=\widetilde \osc_T(\widetilde u_h),\label{tRT}\\
    &\eta_\M:=\Big(\sum_{T\in\M}\eta_T^2\Big)^{\frac12},\ \osc_\M:=\Big(\sum_{T\in\M}\osc_T^2\Big)^{\frac12},%\label{etaM}%\\
    %&\widetilde \eta_\M:=\big(\sum_{T\in\M}\widetilde \eta_T^2\big)^{\frac12},\ \widetilde \osc_\M:=\big(\sum_{T\in\M}\widetilde \osc_T^2\big)^{\frac12}.\label{tetaM}
\end{align*}
%where $\M\subset\T$ is a submesh of $\T$. 
Similarly, we omit the variables in the brackets in \eqref{eq:wtestimators} when $v_h=\wt u_h$ and also for $\wt \eta_\M$ and $\wt\osc_\M$.
\begin{remark}
  {\rm (i)} Our error estimator $\eta_T$ on a element $T$ is slightly different from the standard residual-type error estimator defined as $\eta_T^{\rm std}:=\big(\LtD{R_{T}}{T}^2+h_T\LtD{R_\pT}{\partial T}^2\big)^\frac12$, but, apparently,  their corresponding error estimators on the whole mesh are equivalent, that is, $\eta_\T\eqsim\eta_\T^{\rm std}$. We introduce this new error estimator for the purpose of proving the convergence and quasi-optimality of the standard adaptive algorithm.

  {\rm (ii)} The error estimators for the elliptic projection are introduced only for the theoretical analysis and will not be used in the adaptive algorithm.
\end{remark}

In the subsequent part of this paper, we consider the solutions $u$ and $u_h$ to \eqref{eq:NLHva} and \eqref{eq:FEMva} from Theorems~\ref{thm:cpstab} and \ref{thm:dpstab}. In other words, we specifically take the case of low energy into account, namely 
we make the assumption. 
\begin{assumption}
  Let $u$ and $u_h$ be the solutions to \eqref{eq:NLHva} and \eqref{eq:FEMva} from Theorem~\ref{thm:cpstab} and Theorem~\ref{thm:dpstab} respectively.
\end{assumption}

% \subsection{Upper bounds}
\subsection{Upper and lower bounds}
% In the subsequent part of this paper, we consider the solutions $u$ and $u_h$ to \eqref{eq:NLHva} and \eqref{eq:FEMva} from Theorems~\ref{thm:cpstab} and \ref{thm:dpstab}. In other words, we specifically take the case of low energy into acount. In order to facilitate writing, we will no longer elaborate on t assumptions.

We give {\it a posteriori} upper and lower bounds for the errors of $u_h$ and $P_hu$ in this subsection.
%  Firstly, we should introduce a transition problem to overcome the difficulties from the nonlinear problem: Find $\hu\in H^1(\Om)$, such that
% \eq{\label{eq:transition problem definition}
% a_{u_h}(\hu,v) = (f,v) + \langle g,v\rangle\quad \forall v\in H^1(\Om).
% }
%  Moreover, there hold the following Galerkin orthogonalities:
% \eq{\label{eq:GO}
% a_{u_h}(\hu-u_h,v_h)=0 \qaq b(u-\phu,v_h)=0\quad\forall v_h\in V_\T,}
% the second one is the orthogonality for the elliptic projection from \eqref{eq:elliptic projection definition}.
\begin{theorem}[upper bounds]\label{thm:upbound}
 %Let $u$ and $u_h$ be the solutions to \eqref{eq:NLHva} and \eqref{eq:FEMva}, respectively. 
 If $\lnhminn\vep\hMfg^2\leq\theta_0\ \mbox{and}\ k^3h_\T^{1+\alpha}\leq C_0$, then we have the estimates:
  \begin{align}
    \Lt{\nabla(u-u_h)}&\ls (1+k^2h_\T)\eta_\T, \label{thm:upbound1} \\
    \Lt{u-u_h}&\ls (kh_\T+k^{-\frac12}(kh_\T)^\alpha)\eta_\T, \label{thm:upboundL2}\\
    \LtD{u-u_h}{\GaI}&\ls (h_\T^{\frac{\alpha}2}+kh_\T+k^2h_\T^{1+\frac{\alpha}2})\eta_\T, \label{thm:upboundboundary}\\
    \Ho{u-P_h u}&\ls\eta_\T,\label{thm:upbound2}\\
    \Ho{u-P_h u}&\ls\wt\eta_{\T}.\label{thm:upbound3}
  \end{align}
\end{theorem}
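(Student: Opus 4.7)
My strategy is to derive a single residual error representation for both $u_h$ and $P_hu$ and then exploit it through duality. For any $v\in\HoG$ and any $v_h\in V_\T$, subtracting \eqref{eq:FEMva} from \eqref{eq:NLHva}, shifting the nonlinear coefficient to $|u_h|^2$, and integrating by parts element-wise (using $\Delta u_h\equiv 0$ on each $T$), I obtain
\[
a_{u_h}(u-u_h,v)=\sum_{T\in\T}(R_T,v-v_h)_T+\sum_{T\in\T}(R_{\pT},v-v_h)_{\pT}+k^2\vep\bigl(\oneo(|u|^2-|u_h|^2)u,v\bigr).
\]
By Galerkin orthogonality \eqref{eq:GO} together with the algebraic relation \eqref{eq:a to b relationship}, an analogous identity holds for the elliptic projection:
\[
b(u-P_hu,v)=\sum_{T\in\T}(\wt R_T,v-v_h)_T+\sum_{T\in\T}(\wt R_{\pT},v-v_h)_{\pT}.
\]

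I would first prove \eqref{thm:upbound3} by taking $v=u-P_hu$ and $v_h=\pih v$ a Scott--Zhang quasi-interpolation respecting the Dirichlet boundary in $\Ho{u-P_hu}^2\le b(u-P_hu,u-P_hu)$ and applying Cauchy--Schwarz with the local approximation bounds $\Ltd{v-v_h}{T}\ls h_T\sHoD{v}{\omega_T}$ and $\Ltd{v-v_h}{\pT}\ls h_T^{1/2}\sHoD{v}{\omega_T}$. For \eqref{thm:upboundL2} I would invoke Aubin--Nitsche duality: let $w\in\HoG$ solve the adjoint linearised auxiliary problem (Lemma~\ref{lem:auxiliary stability}) with $\psi=u_h$ and source $u-u_h$. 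Its singular decomposition $w=w_R+\sum_j C_k^jS_j$ with $\Ht{w_R}\ls k\Lt{u-u_h}$ and $|C_k^j|\ls k^{\alpha_j-1/2}\Lt{u-u_h}$ is essential: expressing $\Lt{u-u_h}^2$ as the adjoint pairing with $w$, inserting the error representation with $v_h=\Ih w$, and combining \eqref{eq:singular interpolation error} for the singular components with the standard $H^2$ interpolation bound for $w_R$ produces the precise multiplicative factor $kh_\T+k^{-1/2}(kh_\T)^\alpha$. Estimate \eqref{thm:upboundboundary} follows from the same duality scheme but with boundary data $\overline{u-u_h}|_{\GaI}$ driving the adjoint problem, trace inequalities converting the $L^2(\GaI)$ data into the announced boundary factors.

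The energy estimate \eqref{thm:upbound1} would then follow from the sesquilinear identity
\[
\sHo{u-u_h}^2=\operatorname{Re}a_{u_h}(u-u_h,u-u_h)+k^2\Lt{u-u_h}^2+k^2\vep\Ltd{u_h(u-u_h)}{\Omz}^2,
\]
taking $v=u-u_h$ with $v_h=\pih v$ in the error representation to bound the first term by $\eta_\T\He{u-u_h}$, and invoking \eqref{thm:upboundL2} together with $k^3h_\T^{1+\alpha}\le C_0$ to reduce the $k\Lt{u-u_h}$ contribution to $\ls(1+k^2h_\T)\eta_\T$. Finally, \eqref{thm:upbound2} is obtained by algebraically comparing the two families of residuals through
\[
\wt R_T-R_T=k^2(u-u_h)+k^2\vep\oneo\bigl(|u|^2u-|u_h|^2u_h\bigr)+(u-P_hu),
\]
and an analogous edge identity, yielding $\wt\eta_\T\ls\eta_\T$ once the already-established $L^2$, $H^1$ and boundary estimates for $u-u_h$ and $u-P_hu$ are inserted and the smallness conditions applied.

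\textbf{Main obstacle.} The hardest part is controlling the nonlinear perturbation $k^2\vep(\oneo(|u|^2-|u_h|^2)u,\cdot)$ that contaminates every duality argument; absorbing it requires combining the wave-number-explicit $L^\infty$ bounds $\Linf{u}\ls k^{-1/2}\Mfg$ and $\Linf{u_h}\ls\lnhmin k^{-1/2}\hMfg$ from Theorems~\ref{thm:cpstab}--\ref{thm:dpstab} with the smallness conditions $\lnhminn\vep\hMfg^2\le\theta_0$ and $k^3h_\T^{1+\alpha}\le C_0$. A secondary but delicate subtlety is the bookkeeping of the singular decomposition of each dual solution so that the interpolation error on edges and elements reproduces exactly the factors $(kh_\T)^\alpha$, $(kh_\T)^{2\alpha}$ and $k^2h_\T^{1+\alpha/2}$ asserted by the theorem, rather than worse powers arising from naive $H^1$ interpolation.
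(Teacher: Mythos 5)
Your overall plan — a residual representation for $u_h$, a separate one for $P_hu$, then duality for the $L^2$ estimate and algebraic comparison of residuals at the end — is a coherent outline, but it departs from the paper's proof in a way that introduces a genuine gap, precisely at the step you identify as the ``precise multiplicative factor'' claim.

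\textbf{The gap in the direct Aubin--Nitsche step.} You propose to prove \eqref{thm:upboundL2} by pairing $u-u_h$ directly with the adjoint solution $w$, inserting $v_h=I_hw$ into the residual representation, and invoking \eqref{eq:singular interpolation error} together with the $H^2$ bound on $w_R$. But after Cauchy--Schwarz against the estimator you are left needing a bound of the form
\[
\Bigl(\sum_{T\in\T} h_T^{-2}\Ltd{w-I_hw}{T}^2\Bigr)^{1/2}+\Bigl(\sum_{T\in\T} h_T^{-1}\Ltd{w-I_hw}{\pT}^2\Bigr)^{1/2}\ \ls\ \bigl(kh_\T+k^{-\frac12}(kh_\T)^\alpha\bigr)\Lt{u-u_h},
\]
i.e.\ an \emph{element-wise weighted} $L^2$ interpolation estimate. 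The global estimate \eqref{eq:singular interpolation error} only delivers $\Lt{S_j-I_hS_j}\ls h_\T^{1+\alpha_j}$; pulling out $h_{\T,{\rm min}}^{-2}$ destroys the estimate on the locally refined meshes the theorem is actually concerned with, and the element-wise bound for the $r^{\alpha_j}$ singularity would require a separate element-by-element accounting (corner element vs.\ ring elements) that the paper neither cites nor provides. This is not a cosmetic detail: it is exactly the obstruction the paper's ``elliptic projection argument'' is designed to sidestep. The paper never performs Aubin--Nitsche on $u-u_h$ at all; it splits $e=\rho+\zeta_h$ with $\rho=u-P_hu$, $\zeta_h=P_hu-u_h$, and applies duality only to the \emph{discrete} quantity $\zeta_h$. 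Because $\zeta_h\in V_\T$ and $P_hz\in V_\T$, the choice $\phi=v_h=P_hz$ in identity \eqref{eq:estimator indetity} kills every residual term, and the remaining terms can all be controlled by the \emph{global} $H^1$ approximation error $\Ho{z-P_hz}\le\Ho{z-I_hz}\ls(kh_\T+k^{-\frac12}(kh_\T)^\alpha)\Lt{\zeta_h}$, precisely the place where \eqref{eq:singular interpolation error} is legitimately applicable. So the paper's decomposition is not merely stylistic: it converts a weighted element-wise $L^2$ problem you cannot solve with the stated tools into an unweighted $H^1$ problem you can.

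\textbf{Secondary issues.} (i) For \eqref{thm:upboundboundary} the paper extracts the $L^2(\GaI)$ bound by taking imaginary and real parts of the variational identity \eqref{eq:zeta_h varform NL} for $\zeta_h$, not by an independent boundary-driven duality; your proposed route would need a well-posedness and singularity-decomposition statement for the boundary-data adjoint that the paper does not supply. (ii) Your final step deduces \eqref{thm:upbound2} via $\wt\eta_\T\ls\eta_\T$ from a residual comparison — this is fine \emph{provided} \eqref{thm:upboundL2}--\eqref{thm:upboundboundary} are already established, but since those in turn rest on the flawed duality step, the chain is broken; the paper instead proves \eqref{thm:upbound2} directly from the Scott--Zhang test in \eqref{eq:estimator indetity} with $\phi=\rho$, using the previously derived $\zeta_h$ estimates \eqref{eq:zeta_h L2 to rho NL}--\eqref{eq:zeta_h L2Gamma to rho NL}, and only later compares oscillations. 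I would recommend reorganising along the paper's lines: prove $\Lt{\zeta_h}$, $\Lt{\nabla\zeta_h}$, $\LtD{\zeta_h}{\GaI}$ in terms of $\He{\rho}$ first, then $\Ho{\rho}\ls\eta_\T$, then reassemble.
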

\begin{proof}
  Denote by $e:=u-u_h$, $\rho:=u-P_hu$, and $\zeta_h:=P_hu-u_h$. We show the following identity which we will often use subsequently: for any $\phi\in \HoG$ and $v_h\in V_\T$,
  \eq{\label{eq:estimator indetity}
  a_{u}(e,\phi)=\sum_{T\in\T}\big((R_T,\phi-v_h)_T+\langle R_\pT,\phi-v_h\rangle_\pT\big)+k^2\vep\big((\abs{u}^2-\abs{u_h}^2)u_h,\phi\big)_{\Omz}.}
Indeed, with the aid of elementwise integration by parts and $a_u(u,v_h)-a_{u_h}(u_h,v_h)=0$ by \eqref{eq:NLHva}, \eqref{eq:FEMva}, we derive
\eqn{
  a_{u}(e,\phi)&=a_{u}(u,\phi-v_h)-a_{u_h}(u_h,\phi-v_h)+\big(a_{u_h}(u_h,\phi)-a_u(u_h,\phi)\big)\\
  &=\big(f+k^2(1+\vep\oneo\abs{u_h}^2)u_h,\phi-v_h\big)+\langle g,\phi\rangle_{\GaI}-\mbi k\langle u_h,\phi-v_h\rangle_{\GaI}\\
  & \quad\; -\sum_{T\in\T}\big\langle\frac{\partial u_h}{\partial n},\phi-v_h\big\rangle_\pT+k^2\vep\big((\abs{u}^2-\abs{u_h}^2)u_h,\phi\big)_{\Omz}\\
  &=\sum_{T\in\T}\big((R_T,\phi-v_h)_T+\langle R_\pT,\phi-v_h\rangle_\pT\big)+k^2\vep\big((\abs{u}^2-\abs{u_h}^2)u_h,\phi\big)_{\Omz}.
}

  Firstly we estimate $\zeta_h$. Taking $\phi=v_h=\zeta_h$ into \eqref{eq:estimator indetity}, and combining with \eqref{eq:a to b relationship} and \eqref{eq:GO}, we have
  \begin{align}
    a_u(\zeta_h,\zeta_h)&=a_u(e,\zeta_h)-a_u(\rho,\zeta_h)\notag\\
    &=k^2\vep\big((\abs{u}^2-\abs{u_h}^2)u_h+\abs{u}^2\rho,\zeta_h\big)_{\Omz}+(k^2+1)(\rho,\zeta_h)-\mbi k\langle\rho,\zeta_h\rangle_{\GaI}. \label{eq:zeta_h varform NL}
  \end{align}
 Taking the imaginary part of \eqref{eq:zeta_h varform NL} and noting $e = \rho + \zeta_h$, by $L^\infty$ interior estimates in \eqref{eq:cpstab} and \eqref{eq:dpstab} we have
 \eqn{
    \begin{aligned}
  k\norm{\zeta_h}^2_{0,\GaI}&\ls k\theta_0(\Lt{e}+\Lt{\rho})\Lt{\zeta_h}+k^2h_\T^\alpha\Ho{\rho}\Lt{\zeta_h}+k^{\frac12}h_\T^{\frac{\alpha}2}\Ho{\rho}k^{\frac12}\LtD{\zeta_h}{\GaI}\\
  &\ls k \Lt {\zeta_h}^2  +k^2h_\T^\alpha\Ho{\rho}\Lt{\zeta_h}+k^{\frac12}h_\T^{\frac{\alpha}2}\Ho{\rho}k^{\frac12}\LtD{\zeta_h}{\GaI}
    \end{aligned}
  }
 where \eqref{eq:elliptic projection estimate} and the trace theorem are also used. Then by Young's inequality, we proceed
 \eqn{
    k\LtD{\zeta_h}{\GaI}^2\ls (k+k^3h_\T^{\alpha})\Lt{\zeta_h}^2+kh_\T^\alpha\Ho{\rho}^2,
 }
 which means
 \eq{\label{eq:zeta_h L2Gamma NL}
  \LtD{\zeta_h}{\GaI}\ls (1+kh_\T^{\frac{\alpha}2})\Lt{\zeta_h}+h_\T^{\frac{\alpha}2}\Ho{\rho}.
 }
 Taking the real part of \eqref{eq:zeta_h varform NL}, we have the following inequalities by using \eqref{eq:elliptic projection estimate} and \eqref{eq:zeta_h L2Gamma NL}
 \eqn{
  \Lt{\nabla\zeta_h}^2&\ls (k^2+\theta_0 k)\Lt{\zeta_h}^2+k\theta_0 h_\T^\alpha\Ho{\rho}\Lt{\zeta_h}+k^2h_\T^\alpha\Ho{\rho}\Lt{\zeta_h}+kh^{\frac{\alpha}2}_\T\Ho{\rho}\LtD{\zeta_h}{\GaI}\\
  &\ls k^2\Lt{\zeta_h}^2+(kh^{\frac{\alpha}2}_\T+k^2h_\T^\alpha)\Ho{\rho}\Lt{\zeta_h}+kh_\T^\alpha\Ho{\rho}^2\\
  &\ls k^2(1+kh_\T^\alpha)\Lt{\zeta_h}^2+kh_\T^\alpha\Ho{\rho}^2.
 }
Consequently, by $kh_\T^\alpha\ls 1$ (see \eqref{tkh}), we derive
\eq{\label{eq:zeta_h H1 NL}
\Lt{\nabla \zeta_h}\ls k\Lt{\zeta_h}+k^{\frac12}h_\T^{\frac{\alpha}2}\Ho{\rho}.
}

  Next, for the sake of estimating $\Lt{\zeta_h}$, we consider the duality problem: find $z\in \HoG$ such that
  \eq{\label{eq:duality argument}
    a_{u}(v,z)=(v,\zeta_h)\quad \forall v\in \HoG.
  }
  By the similar proof in Lemma \ref{lem:auxiliary stability}, we can derive the stability estimates and singularity decomposition for the above problem. Then with the help of \eqref{eq:singular interpolation error}, we have
  \begin{equation}\label{eq:duality argument estimate}
    k\Lt{z}\ls \Lt{\zeta_h},\quad \Ho{z-I_hz}\ls (kh_\T+k^{-\frac12}(kh_\T)^\alpha)\Lt{\zeta_h}.
  \end{equation}
  Taking $v=\zeta_h$ in \eqref{eq:duality argument} and arguing as in \eqref{eq:zeta_h varform NL}, we find
  \eqn{
    \norm{\zeta_h}_0^2&=a_{u}(\zeta_h,z)=a_{u}(\zeta_h,z-P_hz)+a_u(\zeta_h,P_hz)\\
    &=-(k^2+1)(\zeta_h,z-P_hz)-k^2\vep(\abs{u}^2\zeta_h,z-P_hz)_{\Omz}+\mbi k\langle \zeta_h,z-P_hz\rangle_{\GaI}+a_u(e,P_hz)-a_u(\rho,P_hz)\\
    &=-(k^2+1)(\zeta_h,z-P_hz)-k^2\vep(\abs{u}^2\zeta_h,z-P_hz)_{\Omz}+\mbi k\langle \zeta_h,z-P_hz\rangle_{\GaI}\\
    &\quad+k^2\vep\big((\abs{u}^2-\abs{u_h}^2)u_h,P_hz-z+z\big)+a_u(\rho,z-P_hz)-(\rho,\zeta_h)\\
    &\ls (k^2+k\theta_0) \Lt{\zeta_h}\Lt{z-P_hz}+k\LtD{\zeta_h}{\GaI}\LtD{z-P_hz}{\GaI}\\
    &\quad+k\theta_0(\Lt{\zeta_h}+\Lt{\rho})(\Lt{z-P_hz}+k^{-1}\Lt{\zeta_h})+\He{\rho}\He{z-P_hz}+\Lt{\rho}\Lt{\zeta_h}.
  }
  From \eqref{eq:elliptic projection estimate}, \eqref{eq:zeta_h L2Gamma NL}, \eqref{eq:duality argument estimate}, $(kh_\T)^\alpha\ls 1$, and letting $\theta_0$ sufficiently small, we deduce that
  \eqn{
    \norm{\zeta_h}_0^2&\ls \Big(k^2h_\T^\alpha \Lt{\zeta_h}+kh_\T^{\frac{\alpha}2}\big((1+kh_\T^{\frac{\alpha}2})\Lt{\zeta_h}+h_\T^{\frac{\alpha}2}\Ho{\rho}\big)+\He{\rho}\Big)\Ho{z-P_hz}+h^\alpha_\T\Ho{\rho}\Lt{\zeta_h}\\
     &\ls(kh_\T^{\frac{\alpha}2}+k^2h_\T^{\alpha})\big(kh_\T+k^{-\frac12}(kh_\T)^\alpha\big)\Lt{\zeta_h}^2+\big(kh_\T+k^{-\frac12}(kh_\T)^\alpha+h_\T^\alpha\big)\He{\rho}\Lt{\zeta_h}\\
     &\ls \big(k^2h_\T^{1+\frac{\alpha}2}+k^3h_\T^{1+\alpha}+(kh_\T)^\alpha k^{\frac12}h^{\frac{\alpha}2}+k^{\frac32+\alpha}h_\T^{2\alpha}\big)\Lt{\zeta_h}^2+\big(kh_\T+k^{-\frac12}(kh_\T)^\alpha\big)\He{\rho}\Lt{\zeta_h}\\
     &\ls t(k,h)\Lt{\zeta_h}^2+\big(kh_\T+k^{-\frac12}(kh_\T)^\alpha\big)\He{\rho}\Lt{\zeta_h}.
  }
  Therefore, if $k^3h_\T^{1+\alpha}$ is sufficiently small, we have
  \eq{\label{eq:zeta_h L2 to rho NL}
    \Lt{\zeta_h}\ls \big(kh_\T+k^{-\frac12}(kh_\T)^\alpha\big)\He{\rho}.
  }
  Altogether with \eqref{eq:zeta_h H1 NL} and \eqref{eq:zeta_h L2Gamma NL}, we get
\begin{align}
   \Lt{\nabla \zeta_h}&\ls \big(k^{\frac12}h_\T^{\frac{\alpha}2}+k^{\frac12}(kh_\T)^{\alpha}+k^2h_\T\big)\He{\rho},\label{eq:zeta_h H1 to rho NL}\\
   \LtD{\zeta_h}{\GaI}&\ls \big(h^{\frac{\alpha}2}_\T+kh_\T+k^{-\frac12}(kh_\T)^\alpha+k^2h_\T^{1+\frac{\alpha}2}+k^{\frac12+\alpha}h_\T^{\frac{3\alpha}2}\big)\He{\rho}.\label{eq:zeta_h L2Gamma to rho NL}
\end{align}

  Secondly, we prove \eqref{thm:upbound2}. Using \eqref{eq:GO}--\eqref{eq:a to b relationship}, \eqref{eq:estimator indetity} with $\phi=\rho$ and $v_h$ to be chosen as its Scott-Zhang interpolant \cite{Scott:Zhang:1990}, \eqref{eq:zeta_h L2 to rho NL}, and \eqref{eq:zeta_h L2Gamma to rho NL}, we derive that
\begin{align*}
  \Ho{\rho}^2&= b(\rho,\rho)=b(e,\rho)=a_{u}(e,\rho)+\big((k^2+1+k^2\vep\oneo\abs{u}^2)e,\rho\big)-\mbi k\langle e,\rho\rangle_{\GaI}\\
  &\ls\eta_\T\Lt{\nabla\rho}+k^2\Lt{e}\Lt{\rho}+k\LtD{e}{\GaI}\LtD{\rho}{\GaI}\\
  &\ls \big(\eta_\T+k^2h_\T^\alpha(\Lt{\zeta_h}+\Lt{\rho})+kh^{\frac{\alpha}2}_\T(\LtD{\zeta_h}{\GaI}+\LtD{\rho}{\GaI})\big)\Ho{\rho}\\
  &\ls \eta_\T\Ho{\rho}+\big(k^3h_\T^{1+\alpha}+k^{\frac32+\alpha}h_\T^{2\alpha}+k^2h_\T^{1+\frac{\alpha}2}+k^2h_\T^{2\alpha}+kh_\T^\alpha\big)\Ho{\rho}^2\\
  &\ls \eta_\T\Ho{\rho}+t(k,h)\Ho{\rho}^2,
\end{align*}
which implies $\Ho{\rho}\ls \eta_\T$ when $k^3h_\T^{1+\alpha}$ is sufficiently small, that is, \eqref{thm:upbound2} holds. Meanwhile, $\He{\rho}\ls(1+kh_\T^{\alpha})\eta_\T\ls\eta_\T$
since $kh_\T^\alpha\ls 1$.

Thirdly, \eqref{thm:upboundL2}--\eqref{thm:upboundboundary} comes from \eqref{eq:zeta_h L2 to rho NL}--\eqref{eq:zeta_h L2Gamma to rho NL}, the triangle inequality and $k^3h_\T^{1+\alpha}\leq C_0$. Indeed, from \eqref{tkh} and $\frac12<\alpha\le 1$,
\eqn{
  \LtD{e}{\GaI}&\leq \LtD{\rho}{\GaI}+\LtD{\zeta_h}{\GaI}\\
  &\ls \big(h_\T^{\frac{\alpha}2}(1+k^{\alpha-\frac12}h_\T^{\frac\alpha2}+k^{\frac12+\alpha}h^\alpha)+kh_\T+k^2h_\T^{1+\frac{\alpha}2}\big)\He{\rho}\\
  &\ls \big(h_\T^{\frac{\alpha}2}(1+t(k,h))+kh_\T+k^2h^{1+\frac{\alpha}2}_\T\big)\He{\rho}\\
  &\ls (h_\T^{\frac{\alpha}2}+kh_\T+k^2h_\T^{1+\frac{\alpha}2})\eta_\T.
}

Finally, the proof of \eqref{thm:upbound3} is standard (see, e.g. \cite{Morin:Nochetto:Siebert:2002}) and is omitted here. 
%and reads as
%\eqn{
%  \begin{aligned}
%    \Ho{\rho}^2&=b(\rho,\rho)=b(\rho,\rho-I_h\rho)\\
%    &=\sum_{T\in\T}\big((\widetilde{R}_T,\rho-I_h\rho)_T+\sum_{e\subset\partial T}\langle\widetilde{R}_e,\rho-I_h\rho\rangle_e\big)\\
%    &\ls \widetilde{\eta}_\T\Lt{\nabla\rho}.
%  \end{aligned}
%}
This proof is completed.
\end{proof}

\begin{remark}
  % {\rm (i)} 
  In this proof, we first estimate $P_hu-u_h$ instead of directly estimating $u-u_h$. The upper bound for the error on the boundary \eqref{thm:upboundboundary} is more precise than (3.10) in \cite{Duan:Wu:2023} which is of the linear case.

  % \sout{{\rm (ii)} \eqref{thm:upbound2} means the error of the elliptic projection of $u$ can be bounded by the error estimator of $u_h$.}
\end{remark}

% \subsection{Lower bounds}
% In this subsection, we obtain {\it a posteriori} lower bounds for the error of $P_hu$ and $u_h$.

\begin{theorem}[lower bounds]
  There exist two positive constants $C_{\rm lb}$ and $\wt C_{\rm lb}$ such that if $\lnhminn\vep\hMfg^2\leq\theta_0$ and $k^3h_\T^{1+\alpha}\leq C_0$, then the following estimates hold:
    \begin{align}
      &\eta_\T^2\leq C_{\rm lb}\big(\Ho{u-\phu}^2+{\rm osc}^2_\T\big),\label{thm:lowbound1} \\
      &\widetilde\eta_\T^2\leq\wt C_{\rm lb}\big(\Ho{u-\phu}^2+{\rm osc}^2_\T\big).\label{thm:lowbound2}
    \end{align}
  Meanwhile, there holds the following local lower bound:
 \eq{\label{eq:local lower bound}
 \eta_T\ls \He{u-u_h}_{\Omega_{\T}(T)}+\osc_{\Sigma_{\T}(T)}.
}
\end{theorem}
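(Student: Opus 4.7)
The plan is to establish the local lower bound \eqref{eq:local lower bound} via the standard Verf\"urth bubble-function technique applied to the error identity \eqref{eq:estimator indetity}, and then derive the two global bounds \eqref{thm:lowbound1}--\eqref{thm:lowbound2} by summing and invoking the projection-to-FE equivalences already used in the proof of Theorem~\ref{thm:upbound}. For each $T'\in\Sigma_\T(T)$, I would let $b_{T'}$ be the cubic interior bubble supported on $T'$, set $\phi:=b_{T'}\wideBar{R_{T'}}$ extended by zero outside $T'$, and take $v_h=0$ in \eqref{eq:estimator indetity}. Since $\phi$ vanishes on $\partial T'$, all jump-residual contributions drop, leaving
\eqn{
\Ltd{\wideBar{R_{T'}}}{T'}^2 \ls a_{u}(u-u_h,\phi) - k^2\vep\big((|u|^2-|u_h|^2)u_h,\phi\big)_{T'\cap\Omz} + \Ltd{R_{T'}-\wideBar{R_{T'}}}{T'}\,\Ltd{\wideBar{R_{T'}}}{T'}.
}
Standard inverse inequalities on $b_{T'}$, together with the $L^\infty$ control $k^2\vep\Linf{u+u_h}\Linf{u_h}\ls k\theta_0$ supplied by Theorem~\ref{thm:cpstab} and Theorem~\ref{thm:dpstab} (the latter under $\lnhminn\vep\hMfg^2\le\theta_0$), give after multiplying by $h_{T'}$ the element-residual estimate $h_{T'}\Ltd{\wideBar{R_{T'}}}{T'}\ls\He{u-u_h}_{T'}+\osc_{T'}$, where $kh_{T'}\ls 1$ (a consequence of $k^3h_\T^{1+\alpha}\le C_0$) folds the $k^2h_{T'}\Lt{u-u_h}$ contribution into the energy norm.

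The jump residual $h_T^{1/2}\Ltd{R_e}{e}$ on each edge $e\subset\partial T$ is treated with the edge-bubble $b_e$ lifted polynomially onto the two triangles sharing $e$; inserted as $\phi$ in \eqref{eq:estimator indetity} with $v_h=0$, integration by parts and the element-residual bound just obtained yield $h_T^{1/2}\Ltd{R_e}{e}\ls\He{u-u_h}_{\omega_e}+\osc_{\omega_e}$, where the impedance case $e\subset\GaI$ uses a single-triangle edge bubble together with the residual definition $R_e=-\partial_n u_h-\mbi k u_h+g$ and $kh_T\ls 1$. Summing the element and edge contributions over $T'\in\Sigma_\T(T)$ and over the three edges of $T$, noting $\omega_e\subset\Omega_\T(T)$, produces \eqref{eq:local lower bound}. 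Squaring and summing over $T\in\T$, and using shape regularity so that each element lies in only a uniformly bounded number of patches, gives $\eta_\T^2\ls\He{u-u_h}^2+\osc_\T^2$.

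To conclude \eqref{thm:lowbound1} I would replace $u-u_h$ by $u-P_hu$: writing $u-u_h=(u-P_hu)+(P_hu-u_h)$ and using \eqref{eq:zeta_h H1 to rho NL}, \eqref{eq:zeta_h L2 to rho NL}, and \eqref{eq:elliptic projection estimate} together with $kh_\T^\alpha\ls 1$ yields $\He{u-u_h}\ls\Ho{u-P_hu}$ under $k^3h_\T^{1+\alpha}\le C_0$, which gives \eqref{thm:lowbound1}. The companion bound \eqref{thm:lowbound2} for $\widetilde\eta_\T$ is simpler, since $P_hu$ is the Galerkin projection defined by the coercive symmetric form $b(\cdot,\cdot)$ with orthogonality \eqref{eq:GO}: the classical Verf\"urth argument, applied directly to the $\widetilde R_T, \widetilde R_e$ and without any nonlinear term to absorb, yields element- and edge-level lower bounds that sum to \eqref{thm:lowbound2}. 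The main obstacle I anticipate is the bookkeeping of the cubic term in \eqref{eq:estimator indetity}: one must verify that $k^2\vep((|u|^2-|u_h|^2)u_h,\phi)_{\Omz}$ produces no worse than an $O(k)$-scaled $\Lt{u-u_h}$ factor on each patch, which is precisely where the $L^\infty$ stability on shape regular meshes from Lemma~\ref{lem:interior-estimate} and the hypothesis $\lnhminn\vep\hMfg^2\le\theta_0$ are essential.
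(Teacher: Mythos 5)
Your argument for the local lower bound \eqref{eq:local lower bound} is fine, but the way you pass from it to the global bound \eqref{thm:lowbound1} has a genuine gap. Squaring and summing gives $\eta_\T^2\ls\He{u-u_h}^2+\osc_\T^2$, and you then invoke ``$\He{u-u_h}\ls\Ho{u-P_hu}$''. That inequality is false with a constant independent of $k$ and $h$: from \eqref{eq:zeta_h H1 to rho NL} the sharp relation is $\He{u-u_h}\ls(1+k^2h_\T)\Ho{u-P_hu}$, and the factor $k^2h_\T$ is \emph{not} bounded under the hypothesis $k^3h_\T^{1+\alpha}\le C_0$ (e.g.\ for $\alpha$ close to $1$ one has $k^2h_\T\eqsim k^{1/2}$ on a mesh with $k^3h_\T^{1+\alpha}\eqsim 1$). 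Indeed this is exactly the $(1+k^2h_\T)$ blow-up factor recorded in \eqref{thm:upbound1}, and it is the whole point that the residual estimator underestimates the error in the preasymptotic regime. The paper avoids the difficulty by a different split: for the element bubble $\phi=\varphi_T\wideBar{R}_T$, which vanishes on $\partial T$ and hence satisfies $(\nabla(u_h-P_hu),\nabla\phi)_T=0$ because $\nabla(u_h-P_hu)|_T$ is constant and $\int_T\nabla\phi=0$, one rewrites $a_u(u-u_h,\phi)$ so that the gradient term involves $u-P_hu$ directly. This yields $h_T\nnorm{\wideBar{R}_T}_{0,T}\ls\nnorm{u-P_hu}_{1,T}+k^2h_T\Lt{u-u_h}+\osc_T$, and the unwanted $k^2h_T\Lt{u-u_h}$ remainder is then controlled by the $L^2$ upper bound \eqref{thm:upboundL2}, giving a $t(k,h_\T)\eta_\T$ contribution that can be absorbed on the left once $k^3h_\T^{1+\alpha}$ is small. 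The edge terms are treated analogously, again using \eqref{thm:upboundL2} and \eqref{thm:upboundboundary}.

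There is a second, smaller, gap in your treatment of \eqref{thm:lowbound2}. The classical Verf\"urth argument for the elliptic projection $P_hu$ produces $\widetilde\eta_\T^2\ls\Ho{u-P_hu}^2+\widetilde\osc_\T^2$, i.e.\ the oscillation of $\widetilde R_T,\widetilde R_e$ built from the exact $u$, not $\osc_\T^2$ built from the discrete $u_h$. The stated bound \eqref{thm:lowbound2} requires an extra step: one must estimate $\vert\widetilde\osc_\T-\osc_\T\vert$ (this is \eqref{eq:oscosc2}, using the $L^\infty$ stability of $P_hu$ and the upper bounds), obtain $\vert\widetilde\osc_\T-\osc_\T\vert\ls\eta_\T$, and then invoke the already-proved \eqref{thm:lowbound1} to absorb the resulting $\eta_\T^2$ into $\Ho{u-P_hu}^2+\osc_\T^2$. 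Without this step, your argument proves a weaker statement than claimed.
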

\begin{proof}
There exists a nonnegative bubble function $\varphi_T$ (see, e.g., \cite{Ainsworth:Tinsley:2000}) whose support is $T$ satisfying
 \eq{\label{eq:element bf}
   \LtD{{\wideBar R}_T}{T}^2\eqsim\int_T \varphi_T\abs{{\wideBar R}_T}^2 \qaq \LtD{\varphi_T{\wideBar R}_T}{T}+h_T\sHoD{\varphi_T{\wideBar R}_T}{T}\ls\LtD{{\wideBar R}_T}{T}.
 }
 Taking $\phi=\varphi_T{\wideBar R}_T, v_h=0$ in \eqref{eq:estimator indetity} and using the properties \eqref{eq:element bf}, we have
 \eqn{
   \begin{aligned}
    \norm{{\wideBar R}_T}^2_{0,T}&\ls({\wideBar R}_T,\phi)_T=a_{u}(u-u_h,\phi)-k^2\vep\big((\abs{u}^2-\abs{u_h}^2)u_h,\phi\big)_{\Omz}+({\wideBar R}_T-R_T,\phi)_T\\
    &=\big(\nabla(u-\phu),\nabla\phi\big)-k^2\big((1+\vep\oneo\vert u\vert^2)(u-u_h),\phi\big)\\
    &\quad -k^2\vep\big((\abs{u}^2-\abs{u_h}^2)u_h,\phi\big)_{\Omz}+({\wideBar R}_T-R_T,\phi)_T\\
    &\ls \norm{u-\phu}_{1,T}h_T^{-1}\norm{{\wideBar R}_T}_{0,T}+k^2\LtD{u-u_h}{T}\norm{{\wideBar R}_T}_{0,T}\\
    &\quad +\lnhminn k\vep\hMfg^2\norm{u-u_h}_{0,T}\norm{{\wideBar R}_T}_{0,T}+ \norm{{\wideBar R}_T-R_T}_{0,T}\norm{{\wideBar R}_T}_{0,T},
   \end{aligned}
 }
 where we have used $(\nabla (u_h-P_hu),\nabla \phi)=0$ by integration parts in the second equal-sign and then we get
 \eqn{
   h_T\norm{{\wideBar R}_T}_{0,T}\ls\norm{u-\phu}_{1,T}+k^2h_T\norm{u-u_h}_{0,T}+h_T\norm{{\wideBar R}_T-R_T}_{0,T}.}
It follows from \eqref{thm:upboundL2} and the triangle inequality that
% \eq{\label{eq:sum R_T estimate}
% \begin{aligned}
% \bigg(\sum_{\substack{T\in\T\\T'\in \Sigma_\T(T)}}h^2_{T'}\norm{R_{T'}}^2_{0,T'}\bigg)^{\frac12}\ls& \norm{u-\phu}_1+k^2h_\T(kh_\T+k^{-\frac12}(kh_\T)^\alpha)\eta_\T\\
% &+\bigg(\sum_{\substack{T\in\T\\T'\in \Sigma_\T(T)}}h^2_{T'}\norm{{\wideBar R}_{T'}-R_{T'}}^2_{0,T'}\bigg)^{\frac12}.
% \end{aligned}}
\eq{\label{eq:sum R_T estimate}
\begin{aligned}
\Big(\sum_{T\in\T}h^2_{T}\norm{R_{T}}^2_{0,T}\Big)^{\frac12}\ls& \norm{u-\phu}_1+k^2h_\T(kh_\T+k^{-\frac12}(kh_\T)^\alpha)\eta_\T+\Big(\sum_{T\in\T}h^2_{T}\norm{{\wideBar R}_{T}-R_{T}}^2_{0,T}\Big)^{\frac12}.
\end{aligned}}
Furthermore, a nonnegative edge bubble function $\chi_e$ whose support is $\Omega_e$ satisfies
%  \eq{\label{lowbound edge bf}
%    \begin{gathered}
%      \int_e\chi_e\vert{\wideBar R}_e\vert^2\eqsim\Vert{\wideBar R}_e\Vert_e^2\\
%      h_T^{-\frac12}\Vert\chi_e{\wideBar R}_e\Vert_T+h_T^{\frac12}\vert\chi_e{\wideBar R}_e\vert_{H^1(T)}\ls\Vert{\wideBar R}_e\Vert_e,\quad \forall T\subset\Omega_e.
%    \end{gathered}
%  }
 \eq{\label{eq:edge bf}
  \int_e\chi_e\vert{\wideBar R}_e\vert^2\eqsim\Vert{\wideBar R}_e\Vert_{0,e}^2\qaq h_T^{-\frac12}\Vert\chi_e{\wideBar R}_e\Vert_{0,T}+h_T^{\frac12}\vert\chi_e{\wideBar R}_e\vert_{1,T}\ls\Vert{\wideBar R}_e\Vert_{0,e}\quad \forall T\subset\Omega_e.
 }
Denoting by $\phi_1=\sum\limits_{e\in\E}h_e\chi_e{\wideBar R}_e$, then taking $\phi=\phi_1-P_h\phi_1, v_h=-P_h\phi_1$ into \eqref{eq:estimator indetity} and using \eqref{eq:edge bf}, we have
\eqn{
   \begin{aligned}
     \sum_{T\in\T}&\sum_{e\in\partial T}\langle R_e,\phi_1\rangle_e=a_{u}(u-u_h,\phi_1-P_h\phi_1)-\sum_{T\in\T}\Big(R_T,\sum_{e\in\pT}h_e\chi_e{\wideBar R}_e\Big)_{T}-k^2\vep\big((\abs{u}^2-\abs{u_h}^2)u_h,\phi_1-P_h\phi_1\big)_{\Omz}\\
     &= b(u-P_hu,\phi_1-P_h\phi_1)-\big((k^2+1+k^2\varepsilon\oneo\vert u\vert^2)(u-u_h),\phi_1-P_h\phi_1\big)\\
     & \quad  +\mbi k\langle u-u_h,\phi_1-P_h\phi_1\rangle_{\GaI}-\sum_{T\in\T}\Big(R_T,\sum_{e\in\partial T}h_e\chi_e{\wideBar R}_e\Big)_T-k^2\vep\big((\vert u\vert^2-\vert u_h\vert^2)u_h,\phi_1-P_h\phi_1\big)_{\Omz} \\
     &\ls |b(u-P_hu,\phi_1)|+k^2\norm{u-u_h}_0h^\alpha_\T\norm{\phi_1}_1+k\Vert u-u_h\Vert_{0,\GaI}h^{\frac\alpha2}_\T\Vert\phi_1\Vert_1  \\
     &\quad   +\Big(\sum_{T\in\T}h_T\Vert R_T\Vert^2_{0,T}\Big)^{\frac12}\Big(\sum_{T\in\T}h_T\Vert {\wideBar R}_{\partial T}\Vert^2_{0,\partial T}\Big)^{\frac12}+k\Lt{u-u_h}h^\alpha_\T\Vert\phi_1\Vert_1\\
     &\ls \Big(\Vert u-P_hu\Vert_1+k^2h^\alpha_\T\Lt{u-u_h}+kh^{\frac\alpha2}_\T\Vert u-u_h\Vert_{0,\GaI}+\Big(\sum_{T\in\T}h_T\Vert R_T\Vert^2_{0,T}\Big)^{\frac12}\Big)\Big(\sum_{T\in\T}h_T\Vert {\wideBar R}_\pT\Vert^2_{0,\pT}\Big)^{\frac12},
    \end{aligned}
 }
 where we have used $\Vert\phi_1\Vert^2_1\ls \sum_{e\in \E}h_e^2\HoD{\chi_e {\wideBar R}_e}{\Omega_e}^2 \ls \sum_{T\in\T}h_T\Vert {\wideBar R}_\pT\Vert^2_{0,\pT}$ in the last inequality. Then from \eqref{thm:upboundL2} and \eqref{thm:upboundboundary} we can deduce that
 \eqn{\begin{aligned}
  \sum\limits_{T\in\T}h_T\norm{{\wideBar R}_{\partial T}}^2_{0,\partial T}&\eqsim \sum\limits_{T\in\T}\sum\limits_{e\in\partial T}\langle {\wideBar R}_e,\phi_1\rangle_e=\sum\limits_{T\in\T}\sum\limits_{e\in\partial T}\big(\langle R_e,\phi_1\rangle_e+\langle {\wideBar R}_e-R_e,\phi_1\rangle_e\big)\\
  &\ls \Big(\nnorm{u-\phu}_1+t(k,h_\T)\eta_\T+\Big(\sum_{T\in\T}h_T\Vert R_T\Vert^2_{0,T}\Big)^{\frac12}\Big)\Big(\sum_{T\in\T}h_T\Vert {\wideBar R}_\pT\Vert^2_{0,\pT}\Big)^{\frac12}\\
  &\quad +\Big(\sum_{T\in\T}h_T\Vert {\wideBar R}_\pT-{\wideBar R}_\pT\Vert^2_{0,\pT}\Big)^{\frac12}\Big(\sum_{T\in\T}h_T\Vert {\wideBar R}_\pT\Vert^2_{0,\pT}\Big)^{\frac12}.
 \end{aligned}
 }
By taking the estimate \eqref{eq:sum R_T estimate} into the above inequation, we can derive that
\eq{\label{eq:sum R_pT estimate}\sum\limits_{T\in\T}h_T\Vert R_\pT\Vert^2_{0,\pT}\ls \Vert u-P_hu\Vert_1^2+t^2(k,h_\T)\eta^2_\T+\osc^2_\T.}
Combining \eqref{eq:sum R_T estimate} and \eqref{eq:sum R_pT estimate}, we obtain $\eta_\T^2\ls\Vert u-P_hu\Vert^2_1+t^2(k,h_\T)\eta_\T^2+\osc^2_\T$, which implies \eqref{thm:lowbound1} if $k^3h^{1+\alpha}_\T$ is sufficiently small.

The local lower bound \eqref{eq:local lower bound} can be derived similarly by taking $\phi=\varphi_T\wideBar{R_T}, v_h=0$ and $\phi=\sum_{e\in\pa T}h_e\chi_e\wideBar{R_e},$ $v_h=0$ in \eqref{eq:estimator indetity}, respectively.

On the other hand, from the standard estimate of the {\it a posteriori} lower bound (see, e.g., \cite{Morin:Nochetto:Siebert:2002}), we can obtain
\eq{\label{lowbound elliptic projection}
  \widetilde{\eta}_\T^2\ls\Vert u-P_hu\Vert^2_1+\widetilde{\osc}^2_\T.}
To prove \eqref{thm:lowbound2}, it remains to estimate the difference between $\osc_T$ and $\wt\osc_T$. Similar to \eqref{eq:Linf interior estimate term2} and \eqref{eq:Linf interior estimate term3a}, we have $\Linf{P_hu}\ls \lnhmin k^{-\frac12} \hMfg$ if $k^3h_\T^2\ls 1$. Therefore
\eqn{%\label{eq:wtosc osc distance}
  \begin{aligned}
    \vert\wt{\osc}_T&-\osc_T\vert^2\leq h^2_{T}\big\Vert\wt R_{T}-R_{T}-\wideBar{\wt{R}_{T}-R_{T}}\big\Vert^2_{0,T}+h_T\big\Vert\widetilde{R}_\pT-R_\pT-\wideBar{\widetilde{R}_\pT-R_\pT}\big\Vert^2_{0,\pT}\\
    &= h^2_{T}\big\Vert \big(k^2(1+\vep\oneo\abs{u}^2)+1\big)u-\wideBar{\big(k^2(1+\vep\oneo\abs{u}^2)+1\big)u}\big\Vert_{0,T}+h_T\Vert \mbi k(u-\wideBar{u})\Vert_{0,\pa T\cap\GaI}\\
    &\leq h^2_{T}\big\Vert(k^2+1)(u-\phu)+k^2\vep\oneo\big((\vert u\vert^2-\vert P_hu\vert^2)u+\vert P_hu\vert^2(u-P_hu)\big)\big\Vert^2_{0,T}+h_T\Vert\mbi k(u-\phu)\Vert^2_{0,\pT\cap \GaI}\\
    % &\ls k^4h^2_{T}\Vert u-\phu\Vert_{0,T}^2+k^2h_{T}^2\lnhminn^2\vep^2\hMfg^4\LtD{u-P_hu}{T}^2+k^2h_T\Vert u-\phu\Vert^2_{0,\pT\cap \GaI},
    &\ls\big(k^4h^2_T\Vert u-\phu\Vert_{0,T}^2+k^2h_T\Vert u-\phu\Vert^2_{0,\pT\cap \GaI}\big).
  \end{aligned}
}
% which implies
% \eq{\label{eq:wtosc osc distance}
%    \vert\wt{\osc}_T-\osc_T\vert^2\leq C_{\rm osc}\big(k^4h^2_T\Vert u-\phu\Vert_{0,T}^2+k^2h_T\Vert u-\phu\Vert^2_{0,\pT\cap \GaI}\big).
% }
From the definitions of $\osc_\T$ and $\wt\osc_\T$%\eqref{etaM}-\eqref{tetaM}
, \eqref{thm:upbound2}, $k^3h_\T^{1+\alpha}\leq C_0$, and trace inequality, we have
\eq{\label{eq:oscosc2}
  \begin{aligned}
    \vert\widetilde{\osc}_\T-\osc_\T\vert^2&\leq\sum\limits_{T\in\T}\vert\widetilde{\osc}_T-\osc_T\vert^2\ls k^4h_\T^2\Lt{u-P_hu}^2+k^2h_\T\LtD{u-P_hu}{\GaI}^2\\
  &\ls (k^4h_\T^2 h_\T^{2\alpha}+k^2h_\T h_\T^{\alpha})\eta_\T^2\ls \eta_\T^2.
  \end{aligned}
}
Then together with \eqref{lowbound elliptic projection} and \eqref{thm:lowbound1}, \eqref{thm:lowbound2} is derived.
\end{proof}
\begin{remark}
  {\rm (i)}
  Combining \eqref{thm:upbound2} with \eqref{thm:lowbound1}, we get $\eta_\T\eqsim\Ho{u-P_hu}+\osc_\T$. That is to say, the error estimator of $u_h$ fits the error of $P_hu$ well and underestimates seriously the error of $u_h$ in the preasymptotic regime.
  This behaviour is first observed by Babu\v{s}ka, {\it et al.} \cite{Babuska:Ihlenburg:Strouboulis:Gangaraj:1997} for a one-dimensional linear problem. We prove this observation the first time for the NLH problem. 

  {\rm (ii)}
  From \eqref{eq:oscosc2}, there exists a constant $C_{\rm osc}$ such that for any $\M\subset\T$, the following estimate holds
  \eq{\label{eq:wtosc osc distance}
    \vert\osc_\M-\wt\osc_\M\vert^2\leq C_{\rm osc} (k^4h_\T^2 h_\T^{2\alpha}+k^2h_\T h_\T^{\alpha})\eta_\T^2.
  }
  This estimate will be used twice subsequently.
\end{remark}

\subsection{Equivalent relationships between two error estimators}\label{subs:equivalent relationship}
In this subsection, we establish the equivalence between $\eta_{\M}$ and $\widetilde{\eta}_{\M}$ in the following sense.
\begin{lemma}\label{lem:equivalent relationships}
  % Under the assumptions of Theorem \ref{thm:upbound}, 
  There exist $C_{\rm eq}$ and $\wt C_{\rm eq}$ satisfying $C_{\rm eq}\wt C_{\rm eq}\geq 1$ such that if $\lnhminn\vep\hMfg^2\leq\theta_0$ and $\ k^3h_\T^{1+\alpha}\leq C_0$, then the following estimates hold for any $\M\subset\T$:
  \eq{
      \eta_\M&\leq C_{\rm eq}\big(\widetilde{\eta}_{\M}+t(k,h_\T)\eta_{\T}\big),\label{etaM1}\\
      \widetilde{\eta}_\M&\leq \wt C_{\rm eq}\big(\eta_{\M}+t(k,h_\T)\eta_\T\big).\label{etaM2}
  }
\end{lemma}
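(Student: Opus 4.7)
The plan is to reduce both estimates \eqref{etaM1} and \eqref{etaM2} to a single two-sided bound $|\eta_\M - \wt\eta_\M| \le C\, t(k,h_\T)\, \eta_\T$, after which the lemma follows with $C_{\rm eq} = \wt C_{\rm eq} = \max\{1,C\}$ so that $C_{\rm eq}\wt C_{\rm eq}\geq 1$ is automatic. I will first expand the residual differences explicitly. Setting $\zeta_h := \wt u_h - u_h = P_hu - u_h \in V_\T$, a direct calculation yields
\eqn{
R_T - \wt R_T &= k^2(u_h - u) + k^2\vep\oneo(|u_h|^2u_h - |u|^2u) - (u - P_hu) \quad \text{on each } T\in\T,\\
R_e - \wt R_e &= \tfrac12\jump{\na\zeta_h} \text{ for } e\subset\Om, \quad \pa_n\zeta_h - \mbi k(u_h - u) \text{ for } e\subset\GaI, \quad 0 \text{ for } e\subset\GaD.
}
Then by the $\ell^2$-triangle inequality and the bounded-overlap of the patches $\{\Sigma_\T(T)\}_{T\in\T}$ (a consequence of shape regularity),
\eqn{
|\eta_\M - \wt\eta_\M|^2 \ls \sum_{T\in\T} h_T^2 \|R_T - \wt R_T\|_{0,T}^2 + \sum_{T\in\T} h_T \|R_\pT - \wt R_\pT\|_{0,\pT}^2,
}
so the task reduces to bounding the two global sums by $t(k,h_\T)^2\eta_\T^2$.

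For the element sum, I will linearize the cubic difference via $||u|^2u - |u_h|^2u_h| \le (\Linf{u}^2 + \Linf{u_h}^2)|u-u_h|$ and use the $L^\infty$ bounds of Theorems~\ref{thm:cpstab}--\ref{thm:dpstab} together with the smallness condition $\lnhminn\vep\hMfg^2 \le \theta_0$ to get $k^2\vep(\Linf{u}^2 + \Linf{u_h}^2)\ls k\theta_0$, which reduces the element residual difference to $\|R_T - \wt R_T\|_{0,T} \ls k^2\|u - u_h\|_{0,T} + \|u - P_hu\|_{0,T}$. Inserting the $L^2$ bound \eqref{thm:upboundL2} for $u-u_h$ and $\|u-P_h u\|_0 \ls h_\T^\alpha \Ho{u-P_hu} \ls h_\T^\alpha\eta_\T$ (from \eqref{thm:upbound2} and \eqref{eq:elliptic projection estimate}), then checking that each resulting monomial in $k$ and $h_\T$ is dominated by one of the six constituents of $t(k,h_\T)$ under $k^3h_\T^{1+\alpha}\le C_0$, will yield the desired element bound.

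For the jump sum, the standard inverse estimate for piecewise-linear $\zeta_h$ on shape-regular meshes gives $\sum_{e\subset\Om} h_e\|\jump{\na\zeta_h}\|_{0,e}^2 + \sum_{e\subset\GaI} h_e\|\pa_n\zeta_h\|_{0,e}^2 \ls \Lt{\na\zeta_h}^2$, while the remaining impedance contribution $k^2 h_\T\LtD{u - u_h}{\GaI}^2$ is controlled by the boundary bound \eqref{thm:upboundboundary}. The decisive input will be $\Lt{\na\zeta_h} \ls t(k,h_\T)\eta_\T$, to be obtained by combining the estimate \eqref{eq:zeta_h H1 to rho NL} already derived inside the proof of Theorem~\ref{thm:upbound} with $\He\rho \ls \eta_\T$ (also shown there) and absorbing each constituent monomial into the definition of $t(k,h_\T)$.

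\textbf{Main obstacle.} The principal technical labour lies in the polynomial bookkeeping in the two sums above: after combining the $L^2$, $H^1$, and boundary-$L^2$ bounds from Theorem~\ref{thm:upbound} with the weights $h_T$ and $h_T^{1/2}$, every produced monomial $k^a h_\T^b$ has to be identified as dominated by one of the six constituents of $t(k,h_\T)$ using only $\alpha\in(1/2,1]$, $h_\T\le 1$, and $k^3h_\T^{1+\alpha}\le C_0$. The one genuinely nonlinear ingredient is the treatment of the cubic term in $R_T - \wt R_T$, which rests on the $L^\infty$ interior estimate of Lemma~\ref{lem:interior-estimate} on locally refined meshes---itself made possible by the elliptic-projection bridging argument---together with the smallness of $\lnhminn\vep\hMfg^2$; once this linearization is in place, the remainder of the argument follows the template of the linear case treated in Duan--Wu.
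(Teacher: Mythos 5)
Your route is genuinely different from the paper's: you aim for a pointwise two-sided bound $|\eta_\M-\wt\eta_\M|\ls t(k,h_\T)\,\eta_\T$ via a direct estimate of the residual differences $R_T-\wt R_T$, $R_e-\wt R_e$, whereas the paper tests the identity \eqref{eq:RT RpT tRT tRpT} against element and edge bubble functions, in the same way as in the lower-bound proof. Your element-residual computation is sound, but the jump part has a genuine gap: the decisive input you invoke, $\Lt{\na\zeta_h}\ls t(k,h_\T)\,\eta_\T$, is false.

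Indeed, \eqref{eq:zeta_h H1 to rho NL} together with $\He{\rho}\ls\eta_\T$ gives only
\[
\Lt{\na\zeta_h}\ls\big(k^{\frac12}h_\T^{\frac\alpha2}+k^{\frac12+\alpha}h_\T^\alpha+k^2h_\T\big)\eta_\T,
\]
and the monomial $k^2h_\T$ is not dominated by any of the six constituents of $t(k,h_\T)$ under the only available hypothesis $k^3h_\T^{1+\alpha}\le C_0$: writing $k^2h_\T=k^{(2\alpha-1)/(1+\alpha)}\big(k^3h_\T^{1+\alpha}\big)^{1/(1+\alpha)}$ shows $k^2h_\T\eqsim k^{(2\alpha-1)/(1+\alpha)}\to\infty$ when $h_\T\eqsim k^{-3/(1+\alpha)}$, while $t(k,h_\T)\ls1$ there. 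This is not a bookkeeping slip: $\zeta_h=(u-u_h)-\rho$, and $\Lt{\na(u-u_h)}\ls(1+k^2h_\T)\eta_\T$ carries precisely the preasymptotic pollution factor that is the central phenomenon of the paper; it contaminates $\Lt{\na\zeta_h}$ as well. Consequently the trivial trace/inverse bound $\sum_e h_e\|\jump{\na\zeta_h}\|_{0,e}^2\ls\Lt{\na\zeta_h}^2$ overshoots by roughly $(k^2h_\T/t(k,h_\T))^2$.

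The paper avoids ever estimating $\Lt{\na\zeta_h}$. Pairing \eqref{eq:RT RpT tRT tRpT} with $\phi=\sum_{e}h_e\chi_e\wideBar{R}_e$ transfers $\langle R_e,\phi\rangle_e$ onto $(\wt R_T,\phi)_T$, $\langle\wt R_\pT,\phi\rangle_\pT$, $(R_T,\phi)_T$, plus corrections involving $(u-u_h,\phi-P_h\phi)$; the factor $\phi-P_h\phi$ supplies an extra $h_\T^{\alpha}$ (resp.\ $h_\T^{\alpha/2}$) via \eqref{eq:elliptic projection estimate} that is absent in the naive jump estimate, so the $L^2$ and boundary bounds \eqref{thm:upboundL2}--\eqref{thm:upboundboundary} then yield exactly $t(k,h_\T)\eta_\T$-sized correction terms. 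This also explains why the lemma is stated with two multiplicative constants $C_{\rm eq}$, $\wt C_{\rm eq}$ rather than a bound on the difference: the two estimators need only be comparable, not close, and the stronger two-sided claim you propose should not be expected to hold. To repair the proof, replace the jump step with a bubble-function pairing as in the derivation of \eqref{eq:RT tRT} and the subsequent edge estimate.
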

\begin{proof}
  Similar to \eqref{eq:estimator indetity}, for any $\phi\in \HoG$ we have
  \eqn{b(u-\phu,\phi)=\sum\limits_{T\in\T}\big((\widetilde{R}_T,\phi)_T+\langle\widetilde{R}_\pT,\phi\rangle_\pT\big).}
  On the other hand, it follows from the definitions of $a_u(\cdot,\cdot)$, $b(\cdot,\cdot)$, and \eqref{eq:GO}:
  \eqn{
    &b(u-P_hu,\phi)=b(u-u_h,\phi-P_h\phi)\\
    & =a_{u}(u-u_h,\phi-P_h\phi)+\big((k^2+1+k^2\vep\oneo\abs{u}^2)(u-u_h),\phi-P_h\phi\big)-\mbi k\langle u-u_h,\phi-P_h\phi\rangle_{\GaI}.
  }
  By combining \eqref{eq:estimator indetity} with the above two identities, we obtain
  \eq{\label{eq:RT RpT tRT tRpT}
  \begin{aligned}
  \sum_{T\in\T}\big((R_T,\phi)_T+\langle R_\pT,\phi\rangle_\pT\big)&=\sum\limits_{T\in\T}\big((\widetilde{R}_T,\phi)_T+\langle\widetilde{R}_\pT,\phi\rangle_\pT\big)-k^2\vep\big((\vert u\vert^2-\vert u_h\vert^2)u_h,\phi-P_h\phi\big)_{\Omz}\\
  &\quad -\big((k^2+1+k^2\varepsilon\oneo\vert u\vert^2)(u-u_h),\phi-P_h\phi\big)+\mbi k\langle u-u_h,\phi- P_h\phi\rangle_{\GaI}.
  \end{aligned}
  }
 Denote by $\wt\M:=\bigcup_{T\in\M}\Sigma_\T(T)$. Taking $\phi=\sum\limits_{T\in\wt\M}h^2_{T}\varphi_{T}{\wideBar R}_{T}$ in \eqref{eq:RT RpT tRT tRpT}, we get 
  \eqn{
    \begin{aligned}
      \sum_{T\in\wt\M}(R_{T},\phi)_{T}&=\sum_{T\in\wt\M}(\wt R_{T},\phi)_{T}-k^2\vep\big((\vert u\vert^2-\vert u_h\vert^2)u_h,\phi-P_h\phi\big)_{\Omz}\\
      &\quad -\big((k^2+1+k^2\varepsilon\oneo\vert u\vert^2)(u-u_h),\phi-P_h\phi\big)+\mbi k\langle u-u_h,\phi-P_h\phi\rangle_{\GaI}.
    \end{aligned}
  }
  Then it follows from \eqref{eq:element bf} and \eqref{eq:elliptic projection estimate} that
  \eqn{
    \begin{aligned}
     \sum_{T\in\wt\M}h^2_{T}&\Vert{\wideBar{R}}_{T}\Vert^2_{0,T}\ls \sum_{T\in\wt\M}h^2_{T}({\wideBar R}_{T},\varphi_{T}{\wideBar R}_{T})_{T}=\sum_{T\in\wt\M}h^2_{T}(R_{T},\varphi_{T}{\wideBar R}_{T})_{T}+\sum_{T\in\wt\M}h^2_{T}({\wideBar R}_{T}-R_{T},\varphi_{T}{\wideBar R}_{T})_{T}\\
      &=\sum_{T\in\wt\M}h^2_{T}(\widetilde{R}_{T},\varphi_{T}{\wideBar R}_{T})_{T}+\sum_{T\in\wt\M}h^2_{T}({\wideBar R}_{T}-R_{T},\varphi_{T}{\wideBar R}_{T})_{T}+\mbi k\langle u-u_h,\phi-P_h\phi\rangle_{\GaI}\\
      &\quad -k^2\vep\big((\vert u\vert^2-\vert u_h\vert^2)u_h,\phi-P_h\phi\big)_{\Omz}-\big((k^2+1+k^2\varepsilon\oneo\vert u\vert^2)(u-u_h),\phi-P_h\phi\big)\\
      &\ls \sum_{T\in\wt\M}h^2_{T}\Vert\widetilde{R}_{T}\Vert_{0,T}\Vert{\wideBar R}_{T}\Vert_{0,T}+\sum_{T\in\wt\M}h^2_{T}\Vert R_{T}-{\wideBar R}_{T}\Vert_{0,T}\Vert{\wideBar R}_{T}\Vert_{0,T}\\
      &\quad +k\Vert u-u_h\Vert_{0,\GaI}h^{\frac\alpha 2}_\T\Vert\phi\Vert_1+k\Lt{u-u_h}h_{\T}^{\alpha}\Vert\phi\Vert_1+k^2\Vert u-u_h\Vert_0h_\T^{\alpha}\Vert\phi\Vert_1\\
      &\ls \bigg(\Big(\sum_{T\in\wt\M}h^2_{T}\Vert\wt{R}_{T}\Vert^2_{0,T}\Big)^{\frac12}+t(k,h_\T)\eta_\T+\Big(\sum_{T\in\wt\M}h^2_{T}\Vert R_{T}-{\wideBar R}_{T}\Vert_{0,T}^2\Big)^{\frac12}\bigg)\Big(\sum_{T\in\wt\M}h^2_{T}\Vert{\wideBar{R}}_{T}\Vert^2_{0,T}\Big)^{\frac12},
    \end{aligned}
  }
  where we have used $\Vert\phi\Vert^2_1\ls\sum_{T\in\wt\M}h^4_{T}\Vert\varphi_T{\wideBar R}_T\Vert^2_{1,T}\ls\sum_{T\in\wt\M}h^2_{T}\Vert{\wideBar R}_T\Vert^2_{0,T}$, the upper bounds \eqref{thm:upboundL2} and \eqref{thm:upboundboundary}, and \eqref{eq:t} to derive the last inequality. Then we proceed
  \eq{\label{eq:RT tRT}
    \sum_{T\in\wt\M}h^2_{T}\Vert R_{T}\Vert^2_{0,T}\ls\sum_{T\in\wt\M}h^2_{T}\Vert\widetilde{R}_{T}\Vert^2_{0,T}+t^2(k,h_\T)\eta^2_\T+\sum_{T\in\wt\M}h^2_{T}\Vert R_{T}-{\wideBar R}_{T}\Vert_{0,T}^2.}
    Denote $\E_\M$ the set of all edges in $\M$ and let $\phi=\sum_{e\in \E_\M}h_e\chi_e {\wideBar R}_e$ in \eqref{eq:RT RpT tRT tRpT}. From \eqref{eq:edge bf} and \eqref{eq:elliptic projection estimate}, we have
  \eqn{
  \begin{aligned}
  \bigg|\sum_{T\in\wt \M}\sum_{\substack{e\subset\pa T\\e\in\E_\M}}\langle R_e,\phi\rangle_e\bigg|&= \bigg|-\sum_{T\in\wt \M}(R_T,\phi)_T + \sum\limits_{T\in\wt\M}\big((\widetilde{R}_T,\phi)_T+\langle\widetilde{R}_\pT,\phi\rangle_\pT\big)+\mbi k\langle u-u_h,\phi-P_h\phi\rangle_{\GaI}\\
  &\quad -k^2\vep\big((\vert u\vert^2-\vert u_h\vert^2)u_h,\phi-P_h\phi\big)_{\Omz}-\big((k^2+1+k^2\varepsilon\oneo\vert u\vert^2)(u-u_h),\phi-P_h\phi\big)\bigg|\\
  &\ls \Big(\sum_{T\in\wt\M}h^2_{T}\Vert R_{T}\Vert^2_{0,T}+\sum_{T\in\wt\M}h^2_{T}\Vert\wt R_{T}\Vert^2_{0,T}+\sum_{T\in\M}h_T\Vert\wt R_\pT\Vert^2_{0,\pT}\Big)^{\frac12}\Big(\sum_{T\in\M}h_T\Vert{\wideBar{R}}_\pT\Vert^2_{0,\pT}\Big)^{\frac12}\\
  &\quad +\big(kh^{\frac\alpha 2}_\T\nnorm{u-u_h}_{0,\GaI}+k^2h^\alpha_\T\nnorm{u-u_h}_0\big)\nnorm{\phi}_1.
  \end{aligned}
  }
  We can easily find that
 \[\Ho{\phi}^2\ls \sum_{e\in \E_\M}h_e^2\norm{\chi_e {\wideBar R} _e}_{1,\Omega_e}^2\ls \sum_{T\in\M} h_T \norm{ {\wideBar R}_{\pa T}}_{0,\partial T}^2 ,\]
  then we deduce from \eqref{eq:RT tRT} that
 \begin{align*}
  \sum_{T\in\M} h_T \norm{{\wideBar R}_{\pa T}}_{0,\partial T}^2&\eqsim\sum_{T\in\wt \M}\sum_{\substack{e\subset\pa T\\e\in\E_\M}}\langle{\wideBar R}_e, h_e\chi_e {\wideBar R}_e\rangle_e =\sum_{T\in\wt\M}\sum_{\substack{e\subset\pa T\\e\in\E_\M}}\langle R_e, \phi\rangle_e + \sum_{T\in\wt\M}\sum_{\substack{e\subset\pa T\\e\in\E_\M}}\langle{\wideBar R}_e-R_e, \phi\rangle_e \\
  &\ls\Big(\sum_{T\in\wt\M}h^2_{T}\big(\nnorm{\wt R_{T}}_{0,T}^2+\nnorm{R_{T}-{\wideBar R}_{T}}^2_{0,T}\big) + \sum_{T\in\M}  h_T\nnorm{\wt R_{\pa T}}_{0,\partial T}^2 \\
  &+t^2(k,h_\T)\eta^2_\T  + \sum_{T\in\M} h_T\norm{R_{\pa T}-{\wideBar R}_{\pa T}}_{0,\partial T}^2\Big)^{\frac12}\big(\sum_{T\in\M} h_T \norm{{\wideBar R}_{\pa T}}_{0,\partial T}^2\big)^{\frac12}.
 \end{align*}
 %Thus $\sum_{T\in\M} h_T \norm{{\wideBar R}_{\pa T}}_{0,\partial T}^2  \ls\sum_{T\in\wt\M}\big(h^2_T\nnorm{\wt R_T}_{0,T}^2+ h_T\nnorm{\wt R_{\pa T}}_{0,\partial T}^2\big) +t^2(k,h_\T)\eta^2_\T    +\osc_{\wt \M}^2.$
 As a consequence, we have the following estimate with the aid of the triangle inequality
 \eqn{
  \sum_{T\in\M} h_T \norm{R_{\pa T}}_{0,\partial T}^2&\ls \sum_{T\in\wt\M}h^2_{T}\big(\nnorm{\wt R_{T}}_{0,T}^2+\nnorm{R_{T}-{\wideBar R}_{T}}^2_{0,T}\big)+ \sum_{T\in\M}h_T\nnorm{\wt R_{\pa T}}_{0,\partial T}^2\\
 &\quad +t^2(k,h_\T)\eta^2_\T  + \sum_{T\in\M} h_T\norm{R_{\pa T}-{\wideBar R}_{\pa T}}_{0,\partial T}^2.
 }
 Altogether with \eqref{eq:RT tRT} and \eqref{eq:wtosc osc distance}, it reads
  \eqn{%\label{etaM3}
  \begin{aligned}
    \eta_\M^2&\eqsim \sum_{T\in\wt\M}h^2_{T}\Vert R_{T}\Vert^2_{0,T}+\sum_{T\in\M} h_T \norm{R_{\pa T}}_{0,\partial T}^2\\
    &\ls\widetilde{\eta}_{\M}^2+t^2(k,h_\T)\eta_\T^2+ \sum_{T\in\wt\M}h^2_{T}\nnorm{R_{T}-{\wideBar R}_{T}}^2_{0,T}+\sum_{T\in\M} h_T\norm{R_{\pa T}-{\wideBar R}_{\pa T}}_{0,\partial T}^2\\
    &\ls \widetilde{\eta}_{\M}^2+t^2(k,h_\T)\eta_\T^2+\wt\eta_\M^2+(k^4h_\T^{2+2\alpha}+k^2h_\T^{1+\alpha})\eta_\T^2\ls \widetilde{\eta}_{\M}^2+t^2(k,h_\T)\eta_\T^2.
  \end{aligned}
    }
By the similar argument, we arrive at
\eqn{%\label{etaM4}
  \wt\eta_{\M}^2&\ls   \eta_{\M}^2 +t^2(k,h_\T)\eta_\T^2+\sum_{T\in\wt\M}h^2_{T}\nnorm{\wt R_{T}-\wideBar{\wt R_{T}}}^2_{0,T}+\sum_{T\in\M} h_T\norm{\wt R_{\pa T}-\wideBar{\wt R_{\pa T}}}_{0,\partial T}^2\\
  &\ls \eta_{\M}^2 +t^2(k,h_\T)\eta_\T^2.
}
Then we get the desired estimates \eqref{etaM1}--\eqref{etaM2}.
\end{proof}
It is natural to derive the following two corollaries. The first one is the equivalence on the whole mesh and the second one is an equivalence between the so-called D\"{o}rfler marking strategies for  $u_h$ and $P_hu$.
\begin{corollary}\label{cor:equivalent of eta}
  Under the assumptions of %Theorem \ref{thm:upbound}
  Lemma~\ref{lem:equivalent relationships}, if in addition $t(k,h_\T)\leq\frac{1}{2C_{\rm eq}}$, then $\eta_\T\eqsim\widetilde{\eta}_\T$.
\end{corollary}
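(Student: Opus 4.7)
The plan is to apply Lemma~\ref{lem:equivalent relationships} directly with the special choice $\M=\T$ and then use the smallness hypothesis $t(k,h_\T)\le \tfrac{1}{2C_{\rm eq}}$ to absorb the $\eta_\T$ term on the right-hand side of \eqref{etaM1} into the left.

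More concretely, I would first take $\M=\T$ in \eqref{etaM1} to obtain
\[
\eta_\T \le C_{\rm eq}\bigl(\widetilde{\eta}_\T + t(k,h_\T)\,\eta_\T\bigr).
\]
By the assumption $t(k,h_\T)\le \tfrac{1}{2C_{\rm eq}}$, the factor $C_{\rm eq}\,t(k,h_\T)$ is bounded by $\tfrac12$, so moving the offending term to the left gives
\[
\tfrac12 \eta_\T \le C_{\rm eq}\,\widetilde{\eta}_\T,
\qquad\text{i.e.,}\qquad
\eta_\T \le 2C_{\rm eq}\,\widetilde{\eta}_\T.
\]
For the converse direction, I would take $\M=\T$ in \eqref{etaM2} and use the just-obtained bound $\eta_\T \le 2C_{\rm eq}\widetilde{\eta}_\T$ (or equivalently the smallness of $t(k,h_\T)$ together with $C_{\rm eq}\wt C_{\rm eq}\ge 1$) to control the residual term, yielding
\[
\widetilde{\eta}_\T \le \wt C_{\rm eq}\bigl(1+ t(k,h_\T)\bigr)\eta_\T \le \tfrac32 \wt C_{\rm eq}\,\eta_\T.
\]
Combining the two inequalities gives the stated equivalence $\eta_\T \eqsim \widetilde{\eta}_\T$.

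There is no real technical obstacle here — all the heavy lifting has already been done in Lemma~\ref{lem:equivalent relationships}. The only point worth noting is that the smallness condition on $t(k,h_\T)$ is exactly what is needed to make the self-referential term $t(k,h_\T)\eta_\T$ harmless; this is consistent with \eqref{tkh}, which ensures $t(k,h_\T)$ is small whenever $k^3h_\T^{1+\alpha}$ is small, a condition that is already in force through the hypotheses inherited from Lemma~\ref{lem:equivalent relationships}.
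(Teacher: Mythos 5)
Your proof is correct and is exactly the natural derivation the paper has in mind: the paper does not spell out a proof for this corollary, merely stating that it is "natural to derive" from Lemma~\ref{lem:equivalent relationships}, and your argument of setting $\M=\T$ in \eqref{etaM1}--\eqref{etaM2} and absorbing the $C_{\rm eq}\,t(k,h_\T)\,\eta_\T$ term via the smallness hypothesis is precisely that derivation. One tiny caveat: the intermediate bound $\widetilde{\eta}_\T\le\tfrac32\wt C_{\rm eq}\,\eta_\T$ tacitly assumes $C_{\rm eq}\ge 1$ (so that $t(k,h_\T)\le\tfrac12$); without that normalization one only obtains $\widetilde{\eta}_\T\le\wt C_{\rm eq}\bigl(1+\tfrac{1}{2C_{\rm eq}}\bigr)\eta_\T$, which is still a fixed constant and therefore suffices for the stated equivalence $\eta_\T\eqsim\widetilde{\eta}_\T$.
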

\begin{corollary}\label{cor:dorfler}
  Under the assumptions of %Theorem \ref{thm:upbound}
  Lemma~\ref{lem:equivalent relationships}, it holds for $\theta_{D}\in(0,1], \eta_\M\geq\theta_D\eta_\T$, and $t(k,h_\T)\leq\frac{\theta_D}{C_{\rm eq}}$, 
  \eqn{
    \widetilde\eta_{\M}\geq\widetilde\theta_D\widetilde\eta_\T\quad \text{with} \quad \widetilde{\theta}_D=\frac{\theta_D-C_{\rm eq}t(k,h_\T)}{C_{\rm eq} \wt C_{\rm eq}(1+t(k,h_\T))}.
  }
  \end{corollary}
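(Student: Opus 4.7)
The proof is short and the whole point is to chain the two inequalities of Lemma~\ref{lem:equivalent relationships} together, once applied on the marked set $\M$ and once applied on the full mesh $\T$. My plan is as follows.

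First, I would rearrange \eqref{etaM1} to get a lower bound for $\widetilde\eta_\M$ in terms of $\eta_\M$. Dividing by $C_{\rm eq}$ and moving the pollution term to the left gives
\eqn{
\widetilde\eta_\M \;\geq\; \frac{1}{C_{\rm eq}}\eta_\M - t(k,h_\T)\,\eta_\T.
}
The marking hypothesis $\eta_\M\geq\theta_D\eta_\T$ then converts the right-hand side into a multiple of $\eta_\T$:
\eqn{
\widetilde\eta_\M \;\geq\; \frac{\theta_D}{C_{\rm eq}}\eta_\T - t(k,h_\T)\,\eta_\T \;=\; \frac{\theta_D - C_{\rm eq}\,t(k,h_\T)}{C_{\rm eq}}\,\eta_\T,
}
and the smallness condition $t(k,h_\T)\leq\theta_D/C_{\rm eq}$ guarantees this bound is nonnegative (in fact at least zero, with the interesting case being strict positivity).

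Second, I would use \eqref{etaM2} with $\M=\T$ to pass from $\eta_\T$ to $\widetilde\eta_\T$. Since $\eta_\T = \eta_{\T}$ appears on both sides of that inequality, it gives simply
\eqn{
\widetilde\eta_\T \;\leq\; \wt C_{\rm eq}\bigl(1 + t(k,h_\T)\bigr)\,\eta_\T,
}
equivalently $\eta_\T \geq \widetilde\eta_\T / \bigl(\wt C_{\rm eq}(1 + t(k,h_\T))\bigr)$. Substituting this lower bound for $\eta_\T$ into the previous inequality yields exactly
\eqn{
\widetilde\eta_\M \;\geq\; \frac{\theta_D - C_{\rm eq}\,t(k,h_\T)}{C_{\rm eq}\wt C_{\rm eq}\bigl(1+t(k,h_\T)\bigr)}\,\widetilde\eta_\T,
}
which is the claimed estimate with the stated $\widetilde\theta_D$.

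There is no real obstacle here; the only things to verify are that the smallness assumptions of Lemma~\ref{lem:equivalent relationships} (namely $\lnhminn\vep\hMfg^2\leq\theta_0$ and $k^3h_\T^{1+\alpha}\leq C_0$) carry over, which they do by hypothesis, and that the constant $\widetilde\theta_D$ is nonnegative, which follows from $t(k,h_\T)\leq\theta_D/C_{\rm eq}$. The only subtle point worth flagging is the order in which the two inequalities are applied: \eqref{etaM1} on the marked set $\M$ to peel off the pollution term, then \eqref{etaM2} on the whole mesh $\T$ to pass from $\eta_\T$ to $\widetilde\eta_\T$; trying to do it in the reverse order leaves an extra factor of $\wt C_{\rm eq}$ multiplying the pollution and spoils the clean form of $\widetilde\theta_D$.
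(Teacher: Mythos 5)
Your proof is correct and matches the paper's argument exactly: apply \eqref{etaM1} on $\M$ and the marking hypothesis to bound $\widetilde\eta_\M$ below by a multiple of $\eta_\T$, then use \eqref{etaM2} with $\M=\T$ to replace $\eta_\T$ by $\widetilde\eta_\T/(\wt C_{\rm eq}(1+t(k,h_\T)))$. Your remark about the order of application is a nice observation but not a point of divergence from the paper.
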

\begin{proof}
  It follows form \eqref{etaM1}-\eqref{etaM2} that
  \begin{align*}
    \wt\eta_{\M}\ge&\ \frac{\eta_{\M}}{C_{\rm eq}}-t(k,h_\T)\eta_{\T} \ge\ \frac{\theta_D\eta_{\T}}{C_{\rm eq}}-t(k,h_\T)\eta_{\T}=\ \widetilde{\theta}_D\wt C_{\rm eq}(1+t(k,h_\T))\eta_{\T}\ge\ \widetilde{\theta}_D\wt\eta_{\T}.
    \end{align*}
%This completes the proof.
\end{proof}

\section{Convergence and quasi-optimality of the AFEM}\label{s:4}
\subsection{Adaptive algorithm}
We use $n(n \ge 0)$ to represent the iteration number in the adaptive algorithm. 
%The subscript index $n$ represents the $n$-th step for example, we denote $\E_n$ all edges of $\T_{n}$, $V_{n}:=V_{\T_n}$, $h_n:=h_{\T_n}$ and so on.
In the $n$-th loop, we first solve the FEM \eqref{eq:FEMva} on $\T_n$ %(assuming the iterations in nonlinear procedure are accurate)
, and then calculate the error estimators $\eta_T$ for each element $T\in\T_n$.
After computing the error estimators, we mark the `bad' mesh $\M_n$ by using the D\"orfler strategy. Finally we refine the mesh $\M_n$ by the newest vertex bisection algorithm (see, e.g., \cite{Cascon:Kreuzer:Nochetto:Siebert:2008,Karkulik:Pavlicek:Praetorius:2013}) at least $b\geq 1$ times and removing the hanging nodes and then set $n:=n+1$ to the first step.
We set $V_{n}:=V_{\T_n}$, $h_n:=h_{\T_n}$, $h_{n,{\rm min}}:=h_{\T_n,{\rm min}}$ for simplicity.

% Following the AFEM for elliptic equations, the AFEM for the Helmholtz equation \eqref{eq:FEMva} can also be described as loops of the following form
% \[ \rm SOLVE \longrightarrow ESTIMATE \longrightarrow MARK  \longrightarrow REFINE. \]

% In the $n$-th loop, we first solve the FEM \eqref{eq:FEMva} on $\T_n$ (assuming the iteration in nonlinear procedure is accurate), and then calculate the error estimators $\eta_T$ for each element $T\in\T_n$.
% Following the estimate step, we use the D\"{o}rfler strategy with parameter $\theta \in (0, 1]$ to mark elements in a subset $\M_n$ of $\T_n$ with minimal cardinality such that
% \eq{\label{Dorfler} \eta_{\M_n} \ge \theta \eta_{\T_n}.}
% In the refinement step, we use the newest vertex bisection algorithm (see, e.g., \cite{bansch1991}) to refine all the elements in $\wt\M_n$ at least $b \ge 1$ times, and then remove the hanging nodes. After refinement, we obtain a new conforming triangulation denoted by $\T_{n+1}$, which will be used in the next loop.

The basic loop of this adaptive algorithm is given below:
\bigskip

\begin{tabular}{|l|}
  \hline
 Given the initial triangulation $\T_0$ and marking parameter $\theta_D \in (0,1],$\\
  set $n:=0$ and iterate,\\
 1. Solve equation \eqref{eq:FEMva} on $\T_n$ to obtain $u_n,$\\
 2. Calculate the error estimator $\eta_T(u_n)$ on every element $T\in \T_{n},$\\
 3. Mark  $\M_n \subset \T_n$ with minimal cardinality such that
 $ \eta_{\M_n} \ge \theta_D \eta_{\T_n},$\\
 4. Refine $\M_n$ using the newest vertex bisection algorithm to get $\T_{n+1}$,\\
 5. Set $n:=n+1$ and then go to step 1.\\
 \hline
 \end{tabular}
 \bigskip

\begin{remark}
  {\rm (i)} 
  Unlike the algorithm in \cite{Duan:Wu:2023} which refines the broader mesh $\wt \M_n:=\bigcup_{T\in\M_n}\Sigma_{\T_n}(T)$ in order to apply the equivalent relationships of the error estimators to derive the convergence and quasi-optimality of ${u_n}$,
  the above adaptive algorithm is the same as the standard adaptive algorithm for elliptic problems since we modify the definitions of error estimators.
  % (i) The above adaptive algorithm is nothing more than the standard adaptive method for the elliptic problem excluding refining $\wt \M_n$ replacing $\M_n$.
  % Like the descriptions in \cite{Duan:Wu:2023}, we use $\wt \M_n$ in order to apply the equivalent relationships in subsection \ref{subs:equivalent relationship} to derive the convergence and quasi-optimality of ${u_n}$.

  {\rm (ii)} In step 1, we assume solving \eqref{eq:FEMva} is accurate but in practical computation we utilize Newton's iteration until the relative error between two consecutive iterative solutions falls below a prescribed tolerance.
  Indeed, when using the frozen-nonlinearity iteration with the iteration number fixed, it is named an adaptive Ka\v{c}anov method for quasi-linear elliptic partial differential equations (see, e.g., \cite{Garau:Morin:Zuppa:2011}).
  For our NLH problem, the convergence cannot be theoretically guaranteed. However, the numerical results are promising with an iteration number specified. We will study this convergence in our future work.
%   we usually use Newton's iterations because of its quadratically convergence to save computing time or using modified Newton's iterations due to its robust convergence.
%   Actually, we can consider the fixed times for iterations instead of waiting for its convergence.
%   We take the case of one iterative times in step 1 as an example. When using the frozen-nonlinearity iteration, it is named an adaptive Ka\v{c}anov method for quasi-linear elliptic partial differential equations (see, e.g., \cite{Garau:Morin:Zuppa:2011}).
%   For example, we consider Newton's method. From $u^l-u^l_h=u^l-\hat u^l_h+\hat\eta_h^l$ with \eqref{eq:dpstab proof1} and \eqref{eq:hatetahl contraction}, we derive
%   \eqn{\He{u^l-u^l_h}\ls\sum^{l}_{j=1}(\frac12)^{l-j}(kh_{j-1}+k^{-\frac12}(kh_{j-1})^{\alpha}+k^3h_{j-1}^2)\hMfg+(\frac12)^{l}\He{u^0-u^0_h}.}
%   Since $\sum^{l}_{j=1}(\frac12)^{l-j}h_{j-1}=\big(1+\frac12\frac{h_{l-2}}{h_{l-1}}+\ldots+(\frac12)^{l-1}\frac{h_0}{h_{l-1}}\big)h_{l-1}$, we obtain
%   \eqn{\He{u^l-u^l_h}\ls l^{\prime}\big(k(\frac12)^{l^\prime}h_0+k^{\alpha-\frac12}(\frac12)^{\alpha l^\prime}h_0^{\alpha}+k^3(\frac12)^{2l^\prime}h_0^2\big)\hMfg+(\frac12)^{l}\He{u^0-u^0_h}}
%   after $h_{l-1}$ getting the $l^{\prime}$-th power of $\frac12$ smaller than $h_0$. That is to say, at least it will converge after refining the coarse mesh for enough times.
\end{remark}

In the next two subsections we shall prove the convergence  and quasi-optimality of the AFEM by following the ``elliptic projection argument'' developed in \cite{Duan:Wu:2023} for AFEM for the linear Helmholtz equation, that is, after establishing the equivalent relationships in   \S\,\ref{subs:equivalent relationship} between the error estimators for the FE solutions and those for the elliptic projections, we transform the analysis of the sequence of FE solutions $u_n$ obtained by the AFEM to the analysis of that of the corresponding elliptic projections, denoted by $\wt u_{n}$ of the exact solution $u$ onto $V_{n}$, which can be done by using the well-developed theory of AFEM for elliptic problems (see, e.g., \cite{Cascon:Kreuzer:Nochetto:Siebert:2008}). 

\subsection{Convergence of the AFEM}
 In this subsection we prove the convergence of the AFEM by proving the convergence of the sequence of the elliptic projections $\wt u_{n}$. But the error estimators $\wt\eta_\T$ for the elliptic projections do not satisfy the so-called  ``estimator reduction property" (see, e.g., \cite[Corollary 3.4]{Cascon:Kreuzer:Nochetto:Siebert:2008}) which is crucial in the analysis of AFEM for elliptic problems. 
% Denote by $\wt u_{n} \in V_{n}$  the corresponding elliptic projections of exact solution $u$ onto $V_{n}$.
%For the linear Helmholtz equation, \cite{Duan:Wu:2023} developed an  which first establishes some relationships between the error estimators of $u_n$ and $\wt u_n$, and then utilizing the upper and lower bounds, the analysis of $u_n$ is transformed into that of $\wt u_n$ where $\wt u_n$ can be analysed by the theory for elliptic problems (see, e.g., \cite[Corollary 3.4]{Cascon:Kreuzer:Nochetto:Siebert:2008} and \cite[Chapter 4]{Chen:Wu:2010}). 
%We still employ this argument in the NLH problem. Compared with the elliptic projection problem for the linear Helmholtz equation (see \cite[(2.22)]{Duan:Wu:2023}), the present elliptic projection problem \eqref{eq:elliptic projection} includes ad additional nonlinear trem $k^2\vep\oneo\abs{u}^2u$ on the right-hand side, so the analysis becomes more complex.
%Based on modified error estimators, using the standard adaptive algorithm, namely, only refining marked elements, convergence and quasi-optimality can be ensured. 
%However, the proof is not as straightforward as stated in \cite[Remark 4.2]{Duan:Wu:2023},  because the modified error estimator $\wt\eta_T$ does not satisfy the estimator reduction property (see Lemma~\ref{lem:estimator reduction}).
To overcome this difficulty, we introduce an equivalent auxiliary error estimator for the elliptic projection by tuning the weights before the terms of element residuals:
\eq{\label{eq:wtetaT*}
  (\wt\eta_T^*)^2=h_T^2\Ltd{\wt R_T}{T}^2+h_T\Ltd{\wt R_{\pT}}{\pT}^2+\frac{1}{6}h_T^2\sum_{T'\in \Sigma_\T(T)\setminus  \{T\}}\Ltd{\wt R_{T'}}{T'}^2.
}
It is clear that there exist positive constants $c_1$ and $c_2$ such that
\eqn{
  c_1\wt\eta_T \leq \wt \eta^*_T \leq c_2\wt\eta_T.
}

  The first lemma gives the distance between the error estimators of two different discrete functions.
\begin{lemma}[local perturbation]\label{lem:local perturbation}
  For all $T\in\T, v_{h1},v_{h2}\in V_{\T}$, there exists a positive constant $C_{\rm per}$ which depends only on the shape regularity of $\T_0$, such that
  \eq{\label{eq:local perturbation}
    \big\vert\wt \eta^*_T(v_{h1})-\wt\eta^*_T(v_{h2})\big\vert\leq C_{\rm per}\HoD{v_{h1}-v_{h2}}{\Omega_\T(T)}.
  }
\end{lemma}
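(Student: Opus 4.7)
The plan is to exploit that the map $v_h\mapsto\wt\eta^*_T(v_h)$ is the norm of an affine function of $v_h$: writing $\wt\eta^*_T(v_h)=\|\Phi(v_h)\|_X$ for the weighted product $L^2$-space $X$ underlying \eqref{eq:wtetaT*}, all data terms (those involving $f$, $u$, and $g$) enter only through the constant part of $\Phi$ and therefore cancel in the difference $\Phi(v_{h1})-\Phi(v_{h2})$. Applying the reverse triangle inequality $\bigl|\,\|\Phi(v_{h1})\|_X-\|\Phi(v_{h2})\|_X\,\bigr|\le\|\Phi(v_{h1})-\Phi(v_{h2})\|_X$ then reduces the estimate to a bound on the linear part of $\Phi$ applied to $w:=v_{h1}-v_{h2}\in V_\T$.

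Concretely, from the definitions in \eqref{eq:wtestimators} one checks that $\wt R_{T'}(v_{h1})-\wt R_{T'}(v_{h2})=-w|_{T'}$ for every $T'\in\Sigma_\T(T)$, whereas the edge residuals differ by $-\tfrac12\jump{\na w}$ on interior edges, by $-\tfrac{\partial w}{\partial n}$ on $e\subset\GaI$, and by $0$ on $e\subset\GaD$. Substituting into \eqref{eq:wtetaT*} and denoting by $R^{w}_{\partial T}$ the induced edge contribution yields
\[
  \bigl|\wt\eta^*_T(v_{h1})-\wt\eta^*_T(v_{h2})\bigr|^{2}\le h_T^{2}\LtD{w}{T}^{2}+h_T\LtD{R^{w}_{\partial T}}{\partial T}^{2}+\frac{h_T^{2}}{6}\sum_{T'\in\Sigma_\T(T)\setminus\{T\}}\LtD{w}{T'}^{2}.
\]

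Next I would bound each of the three contributions by $\HoD{w}{\Omega_\T(T)}^{2}$. The two volume pieces are immediate since $h_T\le\diam\Om$. For the edge piece I would use that $w|_{T''}\in P_1(T'')$ for every triangle $T''$ adjacent to an edge $e\subset\partial T$, so $\na w$ is elementwise constant and the one-line trace/inverse identity gives $\LtD{\na w}{e}^{2}\ls h_{T''}^{-1}\LtD{\na w}{T''}^{2}$. Combining this with the uniform shape regularity of the family produced by newest-vertex bisection (see, e.g., \cite{Karkulik:Pavlicek:Praetorius:2013}), which guarantees $h_{T'}\eqsim h_T$ and $\#\Sigma_\T(T)\ls 1$ with constants depending only on $\T_0$, one obtains $h_T\LtD{R^{w}_{\partial T}}{\partial T}^{2}\ls\LtD{\na w}{\Omega_\T(T)}^{2}$, and summation delivers \eqref{eq:local perturbation}.

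The argument is essentially a bookkeeping exercise, and the only point requiring some care is tracking that every implicit constant can be traced back to the shape regularity of the initial mesh $\T_0$ (so that $C_{\rm per}$ is genuinely independent of $\T$); this is precisely what the uniform shape regularity inherent to newest-vertex bisection provides.
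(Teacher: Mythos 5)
Your argument is correct and is essentially the proof the paper gives: you both apply the reverse triangle inequality in the underlying weighted product $L^2$-space so that the data terms in $\wt R_T$, $\wt R_e$ cancel, observe that the residual differences reduce to $-w$ and $-\tfrac12\jump{\na w}$ (resp.\ $-\partial_n w$, $0$), and then absorb the edge terms via an inverse/trace estimate for piecewise polynomials together with shape regularity. The only cosmetic difference is that you articulate the reverse-triangle-inequality step explicitly, whereas the paper folds it into a single "triangle inequality" line.
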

\begin{proof}
  It is from the definitions of $\wt\eta_T^*$, the triangle inequality, the trace inequality, and the inverse estimate that
  \eqn{
    \big\vert\wt\eta_T^*(v_{h1})-\wt\eta_T^*(v_{h2})\big\vert^2&\leq h_{T}^2\LtD{\wt R_T(v_{h1})-\wt R_T(v_{h2})}{T}^2+h_{T}\LtD{\wt R_{\pT}(v_{h1})-\wt R_{\pT}(v_{h2})}{\pT}^2\\
    &\quad +\frac1{6}h^2_T\sum_{T'\in \Sigma_{\T}(T)\setminus\{T\}}\LtD{\wt R_{T'}(v_{h1})-\wt R_{T'}(v_{h2})}{T'}^2 \\
    &=h_{T}^2\LtD{v_{h1}-v_{h2}}{T}^2+\frac {h_T}{4}\LtD{\jump{\nabla(v_{h1}-v_{h2})}}{\pT\cap\Om}^2+h_{T}\LtD{\nabla(v_{h1}-v_{h2})\cdot n}{\pT\cap\GaI}^2\\
    &\quad +\frac1{6}h^2_T\sum_{T'\in \Sigma_{\T}(T)\setminus\{T\}}\LtD{v_{h1}-v_{h2}}{T'}^2\\
    &\leq C_{\rm per}^2\big(h^2_T\LtD{v_{h1}-v_{h2}}{\Omega_{\T}(T)}^2+\LtD{\nabla(v_{h1}-v_{h2})}{\Omega_{\T}(T)}^2\big)\\
    &\leq C_{\rm per}^2\HoD{v_{h1}-v_{h2}}{\Omega_{\T}(T)}^2.
   }
   Then the conclusion comes from finding the square root.
\end{proof}

The next lemma is the estimator reduction property for the elliptic projection.
\begin{lemma}[estimator reduction]\label{lem:estimator reduction}
  For every $\delta>0$ and $b>0$, letting $\lambda_b:=\max \{3\cdot 2^{-(b+1)},2^{-\frac b2}\}$, there holds
  \eq{\label{eq:estimator reduction}
    \wt\eta_{\T_{n+1}}^*(\wt u_{n+1})^2\leq (1+\delta)\big(\wt \eta^*_{\T_n}(\wt u_n)^2-(1-\lambda_b)\wt \eta_{\M_n}^*(\wt u_n)^2\big)+(1+\delta^{-1})C_{\rm per}^2\Ho{\wt u_n-\wt u_{n+1}}^2. 
  }
%  where $\lambda_b:=\max \{3\cdot 2^{-(b+1)},2^{-\frac b2}\}$.
\end{lemma}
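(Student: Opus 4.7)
The proof follows the ``estimator reduction'' framework of \cite[Corollary~3.4]{Cascon:Kreuzer:Nochetto:Siebert:2008}, adapted to the modified estimator $\wt\eta_T^*$ which pools residuals from neighbouring elements. My plan has three steps.

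\emph{Step 1 (Young-type localisation).} Since $V_n\subset V_{n+1}$, both $\wt u_n$ and $\wt u_{n+1}$ lie in $V_{n+1}$, so applying Lemma~\ref{lem:local perturbation} on $\T_{n+1}$ together with the elementary inequality $(a+b)^2\le(1+\delta)a^2+(1+\delta^{-1})b^2$ yields, for every $T\in\T_{n+1}$,
\[
\wt\eta_T^*(\wt u_{n+1})^2\le(1+\delta)\,\wt\eta_T^*(\wt u_n)^2+(1+\delta^{-1})C_{\rm per}^2\,\HoD{\wt u_n-\wt u_{n+1}}{\Omega_{\T_{n+1}}(T)}^2.
\]
Summing over $T\in\T_{n+1}$ and invoking the bounded overlap of the patches $\{\Omega_{\T_{n+1}}(T)\}_T$ (inherited from the shape regularity of the newest-vertex bisection) produces exactly the $\Ho{\wt u_n-\wt u_{n+1}}^2$ term on the right-hand side of \eqref{eq:estimator reduction}, after possibly enlarging $C_{\rm per}$ to absorb the overlap constant.

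\emph{Step 2 (Deterministic reduction).} It therefore remains to establish
\[
\sum_{T\in\T_{n+1}}\wt\eta_T^*(\wt u_n)^2\le\wt\eta_{\T_n}^*(\wt u_n)^2-(1-\lambda_b)\,\wt\eta_{\M_n}^*(\wt u_n)^2.
\]
The plan is to split $\T_{n+1}=(\T_n\cap\T_{n+1})\cup(\T_{n+1}\setminus\T_n)$. For $T\in\T_n\cap\T_{n+1}$ the size $h_T$ is unchanged, and two observations show that the estimator does not grow under refinement of neighbours: \emph{(a)} $\jump{\nabla\wt u_n}$ on an edge whose exterior neighbour $T'$ is refined is preserved, because $\wt u_n|_{T'}$ is linear so all children of $T'$ carry the same gradient as $T'$; and \emph{(b)} the neighbour-residual term can only shrink since the children $\{T'_j\}$ of any refined $T'$ satisfy $\sum_j\Ltd{\wt R_{T'_j}}{T'_j}^2=\Ltd{\wt R_{T'}}{T'}^2$ while only those meeting $T$ contribute. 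For a parent $T\in\M_n$ split into children $\{T_i\}$ after at least $b\ge 1$ bisections, shape regularity gives $h_{T_i}\le 2^{-b/2}h_T$. The crucial cancellation $\jump{\nabla\wt u_n}=0$ across all newly created interior edges of $T$ (again because $\wt u_n|_T$ is linear) reduces the edge contribution by a factor $2^{-b/2}$, while $\sum_i\Ltd{\wt R_{T_i}}{T_i}^2=\Ltd{\wt R_T}{T}^2$ together with bounded element valence reduces the interior term and the sibling part of the third term by the factor $\tfrac32\cdot 2^{-b}=3\cdot 2^{-(b+1)}$. Taking the worse of the two factors yields $\lambda_b=\max\{3\cdot 2^{-(b+1)},2^{-b/2}\}$, and combining the bounds on refined and unrefined elements gives the displayed deterministic inequality.

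\emph{Step 3 (Assembly).} Plugging Step~2 into Step~1 produces \eqref{eq:estimator reduction}. The main obstacle is the bookkeeping in Step~2: because $\wt\eta_T^*$ pools residuals from the whole of $\Sigma_\T(T)$, refining a single $T\in\M_n$ perturbs the estimator of every $T'\in\Sigma_{\T_n}(T)$, and these cross contributions must be carefully routed---either back to the reduction factor of $T$ itself (which is what forces the $3\cdot 2^{-(b+1)}$ competitor) or absorbed into the no-worse bound for unrefined elements---so that the sums over $\T_n$ and $\T_{n+1}$ telescope. This is the one new complication relative to the standard residual estimator of \cite{Cascon:Kreuzer:Nochetto:Siebert:2008}; once it is performed, the rest of the argument is routine.
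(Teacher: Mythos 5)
Your proposal follows the same route as the paper's proof: Step~1 is exactly the paper's application of Lemma~\ref{lem:local perturbation} plus Young's inequality, and Step~2 carries out the same parent-wise bookkeeping with the same bisection factors ($2^{-b}$, $2^{-b/2}$) and the same sibling-counting argument giving the $\tfrac32\cdot 2^{-b}=3\cdot 2^{-(b+1)}$ competitor. The one genuine gap is in the decomposition of $\T_{n+1}\setminus\T_n$: you only discuss children of marked elements $T\in\M_n$, but $\T_{n+1}\setminus\T_n$ also contains children of the elements in $\mathcal R\setminus\M_n$, i.e.\ elements bisected solely during the conformity/completion step. Without bounding those, the sums over $\T_n$ and $\T_{n+1}$ do not telescope, since $\sum_{T\in\T_n\setminus\M_n}\wt\eta_T^*(\wt u_n)^2$ must dominate the contribution from children of both the unrefined elements \emph{and} the conformity-refined ones. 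The paper closes this by redoing the Part~I computation with $b=1$, obtaining a factor $\lambda_1\le 1$ so the contribution of $T\in(\T_n\setminus\M_n)\cap\mathcal R$ does not increase; you should add the analogous sentence. Also note your ``possibly enlarging $C_{\rm per}$'' in Step~1 is honest --- summing the patchwise bounds of Lemma~\ref{lem:local perturbation} over $T\in\T_{n+1}$ introduces a finite-overlap factor, a point the paper silently absorbs --- but this does not distinguish your argument from the paper's.
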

\begin{proof}
  First, it can be directly derived from Lemma \ref{lem:local perturbation} and the Young's inequality that
  \eq{\label{eq:reduction eq1}
   \wt\eta_{\T_{n+1}}^*(\wt u_{n+1})^2\leq (1+\delta)\wt\eta_{\T_{n+1}}^*(\wt u_n)^2+ (1+\delta^{-1})C_{\rm per}^2\Ho{\wt u_{n+1}-\wt u_n}^2.
  }

Then for $T\in \T_n,$ we define $T_*:=\{T_1\in \T_{n+1}:\; T_1\subset T\}$, and handle $\wt\eta_{\T_{n+1}}^*(\wt u_{n})^2$ in 
two parts:
  \eqn{
 \wt\eta_{\T_{n+1}}^*(\wt u_{n})^2&=\sum_{\substack {T\in\M_n\\ T_1\in T_*}}\wt\eta_{T_1}^*(\wt u_{n})^2+\sum_{\substack {T\in \T_{n}\setminus\M_n\\ T_1\in T_*}}\wt\eta_{T_1}^*(\wt u_{n})^2.
}
Part \uppercase\expandafter{\romannumeral1}: for $T\in \M_n$, we have
\eqn{
 \sum_{T_1\in T_*}\wt\eta_{T_1}^*(\wt u_{n})^2&=\sum_{T_1\in T_*}\Big(h_{T_1}^2\Ltd{\wt R_{T_1}}{T_1}^2+h_{T_1}\Ltd{\wt R_{\pa T_1}}{\pa T_1}^2+\frac1{6}h^2_{T_1}\sum_{T'\in\Sigma_{\T_{n+1}}(T_1)\setminus\{T_1\}}\Ltd{\wt R_{T'}}{T'}^2\Big)\\
 &=\sum_{T_1\in T_*}\Big(2^{-b}h^2_{T}\Ltd{\wt R_{T_1}}{T_1}^2+2^{-\frac b2}h_{T}\Ltd{\wt R_{\pa T_1}}{\pa T_1}^2+\frac{2^{-b}}{6}h_T^2\sum_{\substack{T'\in\Sigma_{\T_{n+1}}(T_1)\setminus\{T_1\}\\T'\not\subset T}}\Ltd{\wt R_{T'}}{T'}^2\\
 &\quad +\frac{2^{-b}}{6}h_T^2\sum_{\substack{T'\in\Sigma_{\T_{n+1}}(T_1)\setminus\{T_1\}\\T'\subset T}}\Ltd{\wt R_{T'}}{T'}^2\Big)\\
 &\leq 2^{-b}h^2_{T}\Ltd{\wt R_T}{T}^2+2^{-\frac b2}h_{T}\Ltd{\wt R_{\pT}}{\pa T}^2+\frac{2^{-b}}{6}h^2_T\sum_{T'\in\Sigma_{\T_n}(T)\setminus\{T\}}\Ltd{\wt R_{T'}}{T'}^2 +\frac{2^{-b}\cdot 3}{6}h^2_T\Ltd{\wt R_T}{T}^2\\
 &\leq \lambda_b\wt\eta^*_T(\wt u_n)^2.
}
Since $\wt\eta^*_{\M_n}(\wt u_n)^2=\sum_{T\in\M_n}\wt\eta^*_T(\wt u_n)^2$,
we get
\eq{\label{eq:estimator reduction prat1}
 \sum_{\substack {T\in\M_n\\ T_1\in T_*}}\wt\eta_{T_1}^*(\wt u_{n})^2\leq \lambda_b\wt\eta^*_{\M_n}(\wt u_n)^2.
}

Part \uppercase\expandafter{\romannumeral2}: let $\mathcal R$ be the set of refined elements in $\T_n$. Exactly, $\M_n\subset\mathcal R$. For $T\in\T_n\setminus\mathcal R$, $T_*$ equals $\{T\}$, and  
\eqn{
 \sum_{T_1\in T_*}\wt \eta_{T_1}^*(\wt u_n)^2=\wt \eta_{T_1}^*(\wt u_n)^2\leq \wt \eta_{T}^*(\wt u_n)^2.
}
On the other hand, for $T\in(\T_n\setminus\M_n)\cap \mathcal R$, similar to the first estimate in in Part \uppercase\expandafter{\romannumeral1}, we derive
\eqn{
 \sum_{T_1\in T^*}\wt \eta_{T_1}^*(\wt u_n)^2&=\sum_{ T_1\in T_*}\Big(h_{T_1}^2\Ltd{\wt R_{T_1}}{T_1}^2+h_{T_1}\Ltd{\wt R_{\pa T_1}}{\pa T_1}^2+\frac1{6}h^2_{T_1}\sum_{T'\in\Sigma_{\T_{n+1}}(T_1)\setminus\{T_1\}}\Ltd{\wt R_{T'}}{T'}^2\Big)\leq \lambda_1\wt \eta_{T}^*(\wt u_n)^2.
}
As a conclusion, we can get
\eq{\label{eq:estimator reduction prat2}
 \sum_{\substack {T\in \T_{n}\setminus\M_n\\ T_1\in T_*}}\wt\eta_{T_1}^*(\wt u_{n})^2\leq  \sum_{T\in \T_{n}\setminus\M_n}\wt\eta_{T}^*(\wt u_{n})^2 = \wt\eta_{\T_n}^*(\wt u_n)^2-\wt\eta_{\M_n}^*(\wt u_n)^2.
}
Then combining \eqref{eq:estimator reduction prat1} and \eqref{eq:estimator reduction prat2}, we have
\eq{\label{eq:reduction eq2}
 \wt\eta_{\T_{n+1}}^*(\wt u_{n})^2\leq \wt\eta_{\T_n}^*(\wt u_n)^2-(1-\lambda_b)\wt\eta_{\M_n}^*(\wt u_n)^2.
}

Finally, \eqref{eq:estimator reduction} comes from \eqref{eq:reduction eq1} and \eqref{eq:reduction eq2}. This completes the proof of the lemma.
\end{proof}
% \begin{remark}
%   Comparing with \cite[Corollary 3.4]{Cascon:Kreuzer:Nochetto:Siebert:2008} and \cite[Lemma 4.2]{Duan:Wu:2023} where the estimator reductions read
%   \eqn{
%     \wt\eta_{\T_{n+1}}(\wt u_{n+1})^2\leq (1+\delta)\big(\wt \eta_{\T_n}(\wt u_n)^2-(1-\lambda_b)\wt \eta_{\M_n}(\wt u_n)^2\big)+(1+\delta^{-1})C_{\rm redu}\Ho{\wt u_n-\wt u_{n+1}}^2,
%   }
%   there is no $\delta$ in our estimate. The reason is that we consider the square error of $\wt\eta_T^*$ in \eqref{eq:local perturbation} instead of absolute error, so we do not use the Young's inequality and escape adding a constant $\delta$.
% \end{remark}

  The last theorem in this subsection is showing the convergence of the AFEM in the case of low energy.
\begin{theorem}[convergence]\label{thm:contraction property}
  Let $\theta_D\in (0,1]$. There exists a constant $C_{\rm conv} \in (0, 1)$ depending only on $\T_{0}, b$, and $\theta_D$,  such that if $\vert \ln h_{n,{\rm min}}\vert \vep\hMfg^2\leq\theta_0 \ \text{and} \  k^3h^{1+\alpha}_0\le C_0$, then
  \eqn{\He{u-u_n} \ls  C_{\rm conv}^n(1+k^2h_n)\big(kh_0+k^{-\frac12}(kh_0)^{\alpha}\big) \hMfg. }
\end{theorem}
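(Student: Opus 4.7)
The overall strategy is the ``elliptic projection argument'' of \cite{Duan:Wu:2023}: rather than analysing the FE sequence $u_n$ directly, I transfer the problem to the sequence of elliptic projections $\tilde u_n:=P_nu\in V_n$ and exploit the fact that the symmetric coercive bilinear form $b(\cdot,\cdot)$ makes the standard CKNS-type contraction machinery \cite{Cascon:Kreuzer:Nochetto:Siebert:2008} available for $\tilde u_n$. Since $V_n\subset V_{n+1}$ and $b$ induces an inner product equivalent to the $H^1$ inner product, $\tilde u_n$ is the $b$-orthogonal projection of $\tilde u_{n+1}$ onto $V_n$, so Pythagoras holds:
\eqn{
\|u-\tilde u_{n+1}\|_b^2=\|u-\tilde u_n\|_b^2-\|\tilde u_n-\tilde u_{n+1}\|_b^2,\quad \|v\|_b:=b(v,v)^{1/2}\eqsim\Ho{v}.
}

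The first key step is the transfer of the D\"orfler property. Since $h_n\le h_0$ along bisection refinements, the hypothesis $k^3h_0^{1+\alpha}\le C_0$ implies $t(k,h_n)\le t(k,h_0)$ is uniformly (sufficiently) small in $n$. Hence Corollary \ref{cor:dorfler} converts $\eta_{\M_n}\ge\theta_D\eta_{\T_n}$ into $\widetilde\eta_{\M_n}\ge\widetilde\theta_D\widetilde\eta_{\T_n}$ with $\widetilde\theta_D$ bounded away from zero uniformly in $n$, and the elementwise equivalence $c_1\widetilde\eta_T\le\widetilde\eta_T^*\le c_2\widetilde\eta_T$ yields the same property for the tuned estimator $\widetilde\eta^*$, i.e., $\widetilde\eta^*_{\M_n}(\tilde u_n)\ge\theta^\ast\widetilde\eta^*_{\T_n}(\tilde u_n)$ for some $\theta^\ast>0$ independent of $n$. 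The second key step combines Lemma \ref{lem:estimator reduction} (estimator reduction for $\widetilde\eta^*$), the Pythagoras identity above, and the upper bound $\Ho{u-\tilde u_n}\ls\widetilde\eta^*_{\T_n}(\tilde u_n)$ from \eqref{thm:upbound2} and $\widetilde\eta_{\T_n}\eqsim\widetilde\eta_{\T_n}^*$. Following the standard CKNS recipe, one chooses $\delta>0$ small enough so that $(1+\delta)(1-(1-\lambda_b)(\theta^\ast)^2)<1$ and a weight $\gamma>0$ to absorb $(1+\delta^{-1})C_{\rm per}^2\|\tilde u_n-\tilde u_{n+1}\|_1^2$ into the Pythagoras gain, arriving at the contraction
\eqn{
\|u-\tilde u_{n+1}\|_b^2+\gamma\bigl(\widetilde\eta^*_{\T_{n+1}}(\tilde u_{n+1})\bigr)^2\le C_{\rm conv}^{2}\Bigl(\|u-\tilde u_n\|_b^2+\gamma\bigl(\widetilde\eta^*_{\T_n}(\tilde u_n)\bigr)^2\Bigr)
}
for some $C_{\rm conv}\in(0,1)$ depending only on $\T_0$, $b$, and $\theta_D$.

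To translate the contraction back to $\He{u-u_n}$, I use \eqref{thm:upbound1} and Corollary \ref{cor:equivalent of eta} to get
\eqn{
\He{u-u_n}\ls(1+k^2h_n)\eta_{\T_n}\eqsim(1+k^2h_n)\widetilde\eta^*_{\T_n}(\tilde u_n),
}
so iterating the contraction yields $\He{u-u_n}\ls(1+k^2h_n)C_{\rm conv}^n\bigl(\|u-\tilde u_0\|_1+\widetilde\eta^*_{\T_0}(\tilde u_0)\bigr)$. The initial quantity is bounded by approximating $u$ in $V_0$ by the Scott--Zhang interpolant of its singularity decomposition $u=u_R+\sum_jC^j_kS_j$ from Theorem \ref{thm:cpstab}: using $\|u_R\|_2\ls k\hMfg$, $|C^j_k|\ls k^{\alpha_j-1/2}\Mfg$, and the interpolation bounds \eqref{eq:singular interpolation error}, one obtains $\Ho{u-\tilde u_0}\ls(kh_0+k^{-1/2}(kh_0)^\alpha)\hMfg$ and, combined with the lower bound \eqref{thm:lowbound2}, the same bound for $\widetilde\eta^*_{\T_0}(\tilde u_0)$ (absorbing the oscillation into $\hMfg$). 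Putting everything together gives the claimed estimate.

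The main obstacle lies not in the CKNS recipe itself, which is routine once the preparatory lemmas are available, but in ensuring that all perturbation terms of the form $t(k,h_n)\eta_{\T_n}$ introduced by the Dörfler transfer (Corollary \ref{cor:dorfler}) and by the estimator equivalence (Corollary \ref{cor:equivalent of eta}) remain uniformly absorbable along the adaptive loop. Since $h_n\le h_0$ is monotone and $t(k,\cdot)$ is monotone in its second argument by \eqref{eq:t}, the hypothesis $k^3h_0^{1+\alpha}\le C_0$ (with $C_0$ small enough) suffices, and this is precisely why the preasymptotic condition is imposed on $h_0$ only. A secondary technical point is that the hypothesis $\lnhminn\vep\hMfg^2\le\theta_0$ must propagate to every level $n$ so that the a priori/a posteriori estimates of \S\ref{s:2}--\S\ref{s:3} remain valid throughout; since $h_{n,{\rm min}}$ decreases with $n$, this is exactly what is being assumed.
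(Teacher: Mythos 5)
Your proposal follows essentially the same route as the paper: elliptic projection, Pythagoras from the Galerkin orthogonality of $P_h$, D\"orfler transfer via Corollary \ref{cor:dorfler}, the tuned estimator $\wt\eta^*$ and its elementwise equivalence to $\wt\eta$, the estimator-reduction Lemma \ref{lem:estimator reduction}, and the CKNS contraction, followed by translation back to $u_n$ and a bound for the initial quantity using the singularity decomposition. Two small points worth tightening: (i) to control the full energy norm $\He{u-u_n}$ you need both \eqref{thm:upbound1} (for $\|\nabla(u-u_n)\|_0$) and \eqref{thm:upboundL2} (for $k\|u-u_n\|_0$), together with the observation that $k^{1/2}(kh_n)^\alpha\ls 1$ under $k^3h_n^{1+\alpha}\ls 1$, so that the prefactor collapses to $(1+k^2h_n)$; and (ii) for the initial bound $\Ho{u-\wt u_0}$ the paper implicitly uses the best-approximation property of $P_h$ from \eqref{eq:elliptic projection estimate} with the interpolant $I_h$ and the singular-function estimates \eqref{eq:singular interpolation error}, which is what your Scott--Zhang argument achieves; either works. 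Otherwise the argument is correct and matches the paper's proof.
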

\begin{proof}
    The key point is proving the contraction property: there exist constants $\beta>0$ and $C_{\rm conv}\in (0,1)$, such that
    \eq{\label{eq:contraction property}
    \Ho{u-\wt u_{n+1}}^2+\beta \wt\eta_{\T_{n+1}}^*(\wt u_{n+1})^2 \leq C_{\rm conv}^2\big(\Ho{u-\wt u_n}^2+\beta \wt\eta_{\T_n}^*(\wt u_n)^2\big).
    }

  Combining \eqref{eq:GO} and Lemma \ref{lem:estimator reduction}, we choose $\beta=\frac1{(1+\delta^{-1})C_{\rm per}^2}$ to get
  \eqn{
    \Ho{u-\wt u_{n+1}}^2+\beta\wt\eta_{\T_{n+1}}^*(\wt u_{n+1})^2&\leq\Ho{u-\wt u_n}^2-\Ho{\wt u_{n+1}-\wt u_n}^2+\beta(1+\delta)\big(\wt\eta_{\T_{n}}^*(\wt u_{n})^2-(1-\lambda_b) \wt\eta_{\M_{n}}^*(\wt u_{n})^2\big)\\
    &\quad +\beta(1+\delta^{-1})C_{\rm per}^2\Ho{\wt u_{n+1}-\wt u_n}^2\\
    &=\Ho{u-\wt u_n}^2+\beta(1+\delta)\big(\wt\eta_{\T_{n}}^*(\wt u_{n})^2-(1-\lambda_b) \wt\eta_{\M_{n}}^*(\wt u_{n})^2\big).
  }
  Noting that $\wt\eta_{\M_{n}}(\wt u_{n})\geq \wt\theta_{\rm D} \wt\eta_{\T_n}(\wt u_n)$ (see Corollary~\ref{cor:dorfler}) and $\Ho{u-\wt u_n}\leq C_{\rm up}\wt\eta_{\T_n}$ for some constant  $C_{\rm up}>0$ (see  \eqref{thm:upbound2}), we have
  \eqn{
    \wt \eta^*_{\M_n}\geq c_1\wt\eta_{\M_n}\geq c_1\wt\theta_{\rm D}\wt\eta_{\T_n}\geq\frac{c_1\wt\theta_{\rm D}}{c_2}\wt \eta^*_{\T_n},\quad \Ho{u-\wt u_n}\leq \frac{C_{\rm up}}{c_1}\wt\eta_{\T_n}^*.}
 Then, for any  $\mu \in (0,1)$,
    \eqn{
    \Ho{u-\wt u_{n+1}}^2+\beta(\wt\eta_{\T_{n+1}}^*)^2&\leq\Big(1-\beta(1+\delta)(1-\lambda_b)\mu\Big(\frac{c_1\wt \theta_{\rm D}}{c_2C_{\rm up}}\Big)^2\Big)\Ho{u-\wt u_n}^2\\
    &\quad +\beta(1+\delta)\Big(1-(1-\lambda_b)(1-\mu)\Big(\frac{c_1\wt\theta_{\rm D}}{c_2}\Big)^2\Big)(\wt\eta_{\T_{n}}^*)^2.
  }
   To obtain \eqref{eq:contraction property}, we let
  \eqn{
    C_{\rm conv}^2=\max\Big\{1-\beta(1+\delta)(1-\lambda_b)\mu\Big(\frac{c_1\wt \theta_{\rm D}}{c_2C_{\rm up}}\Big)^2, \,(1+\delta)\Big(1-(1-\lambda_b)(1-\mu)\Big(\frac{c_1\wt\theta_{\rm D}}{c_2}\Big)^2\Big)\Big\}
  }
  and choose $\delta$ sufficiently small such that $C_{\rm conv}<1$.

   Finally, it follows from the upper bounds \eqref{thm:upbound1} and \eqref{thm:upboundL2}, the equivalences of $\eta_\T, \wt\eta_\T,$ and $\wt\eta_\T^*$,  \eqref{eq:contraction property}, and the lower bound \eqref{thm:lowbound2}  that
    \eqn{
      \He{u-u_n} &\ls (1+k^{\frac12}(kh_n)^{\alpha}+k^2h_n) \eta_{\T_n}\ls (1+k^2h_n) \wt\eta_{\T_n}^*\\
    &\ls  (1+k^2h_n)(\Ho{u-\wt u_{n}}^2+ \beta(\wt\eta_{\T_n}^*)^2)^{1/2}\\
    &\ls (1+k^2h_n)(\Ho{u-\wt u_{0}}+ \wt\eta_{\T_0}^*)C_{\rm conv}^n\\
    &\ls (1+k^2h_n) (\Ho{u-\wt u_{0}}+ {\rm osc}_{\T_0})C_{\rm conv}^n. 
    }
    Altogether with
    \begin{align}
    &{\rm osc}_{\T_0} = \Big(\sum_{T\in \T_0} \big(h_T^2\nnorm{f-\wideBar f}^2_{0,T}+ h_T\nnorm{g-\wideBar g}_{0,\pa T\cap \GaI}^2\big)\Big)^\frac12\ls h_0 \big(\nnorm{f}_0+\nnorm{g}_{\frac12, \GaI}\big),\label{eq:osc to Mfg}\\
    &\Ho{u-\wt u_0}\ls (kh_0+k^{-\frac12}(kh_0)^{\alpha})\hMfg,\notag
    \end{align}
    we get the desired convergence result.
  \end{proof}

\subsection{Quasi-optimality of the AFEM}
This subsection concerns the quasi-optimality of the AFEM. Again we consider the quasi-optimality of $\{\wt u_n\}_{n\geq0}$ instead, which can be proved by following the analysis of the AFEM for elliptic problems (see, e.g., \cite[\S 5]{Cascon:Kreuzer:Nochetto:Siebert:2008}), but 
with a key difference that we have to use the error estimators $\eta_T(u_h)$ (instead of $\wt\eta_T(\wt u_h)$ as usual) to characterize the localized upper bound and the optimal marking property associated with the elliptic projections (see Lemmas~\ref{lem:localized upper bound} and \ref{lem:optimal marking} below). Of course, unlike the analysis for the linear Helmholtz equation \cite{Duan:Wu:2023}, we have to deal with the effects of the nonlinear term, where the upper bounds \eqref{thm:upboundL2}--\eqref{thm:upboundboundary} will play an important role. The contraction property \eqref{eq:contraction property} based on our newly introduced auxiliary error estimator in \eqref{eq:wtetaT*} is also essential in our proof of the quasi-optimality. We will only give a sketch of the proof by mainly showing the major differences from those in \cite{Cascon:Kreuzer:Nochetto:Siebert:2008} or \cite{Duan:Wu:2023}.
%\fn{This description makes our effort too trivial and easy, almost like a natural exercise based on the results for 
% linear Helmholtz equation, \cite{Duan:Wu:2023}. We should emphasize our efforts and major nontrivial technical differences.
% \clr{Emphasized in this paragraph, at end of Section 4.1, at the beginning of Section 4.2 as well.}}
%\cb{For the present NLH problem, we use the reduction property (see Lemma~\ref{lem:estimator reduction}) derived by the new error estimators and then follow the ``elliptic projection argument'' to obtain the quasi-optimality of the AFEM. 
%Next we show the proof of the quasi-optimality since the analyses are more complicated than the linear case.
%}

The first lemma below says that the 
 {\it total error} $\big(\Ho{u-\wt u_n}^2+\wt\osc_{\T}^2(\wt u_n)\big)^{\frac12}$ (see \cite{Cascon:Kreuzer:Nochetto:Siebert:2008,Mekchay:Nochetto:2005}) of the elliptic projection $\wt u_n\in V_n$ of $u$ satisfies the Cea's lemma.

\begin{lemma}[optimality of the total error of $\wt u_h$]\label{lem:qo total error}
  %If $\vert\ln h_{\T_n,{\rm min}}\vert\hMfg^2<\theta$ and $k^3h_n^{1+\alpha}\leq C$,
  There exists the following estimate
  \eq{\label{eq:qo total error}
   \Ho{u-\wt u_n}^2+\wt\osc_{\T_n}^2(\wt u_n)\leq \inf_{v_h\in V_{n}}\big(\Ho{u-v_h}^2+\wt\osc_{\T_n}^2(v_h)\big).
  }
\end{lemma}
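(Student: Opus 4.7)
The plan is to exploit two facts: (i) the oscillation quantity $\wt\osc_{\T_n}(v_h)$ is actually the same for every $v_h \in V_n$ (because linear finite element functions are invariant under the polynomial projections used in the definition of $\osc$), and (ii) the elliptic projection $\wt u_n = P_h u$ is the $\|\cdot\|_1$-best approximation of $u$ from $V_n$ since the bilinear form $b(\cdot,\cdot)$ defined in \eqref{eq:elliptic projection definition} is symmetric and coincides with the $H^1$-inner product. Once both observations are in hand, the claim collapses to a Cea-type bound plus a trivial equality.

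First I would verify the oscillation invariance. Fix $v_h \in V_n$ and any $T\in\T_n$. Since $v_h|_T \in P_1(T) \subset P_3(T)$, the $L^2(T)$-projection onto $P_3(T)$ satisfies $\wideBar{v_h|_T}=v_h|_T$. By linearity of the projection,
\[
\wt R_T(v_h)-\wideBar{\wt R_T(v_h)} = \bigl(f + (k^2(1+\vep\oneo|u|^2)+1)u\bigr) - \wideBar{\bigl(f + (k^2(1+\vep\oneo|u|^2)+1)u\bigr)},
\]
which is independent of $v_h$. For any edge $e\subset \pa T$, the only $v_h$-dependent part of $\wt R_e(v_h)$ is $-\frac{\partial v_h}{\partial n}|_e$, which is a constant on $e$ (hence in $P_1(e)$) and is therefore fixed by the $L^2(e)$-projection onto $P_1(e)$. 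Consequently the $v_h$-dependent part of $\wt R_{\pa T}(v_h)-\wideBar{\wt R_{\pa T}(v_h)}$ also vanishes. This proves
\[
\wt\osc_{\T_n}^2(v_h)=\wt\osc_{\T_n}^2(\wt u_n)\qquad\forall v_h\in V_n.
\]

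Next I would establish the best-approximation property of $\wt u_n$. Since $b(w,w)=\nnorm{w}_1^2$ and the Galerkin orthogonality \eqref{eq:GO} gives $b(u-\wt u_n,v_h)=0$ for all $v_h\in V_n$, for any $v_h\in V_n$,
\[
\Ho{u-\wt u_n}^2 = b(u-\wt u_n,u-\wt u_n) = b(u-\wt u_n,u-v_h) \le \Ho{u-\wt u_n}\,\Ho{u-v_h},
\]
so $\Ho{u-\wt u_n}\le \Ho{u-v_h}$. Combining with the oscillation equality yields, for every $v_h\in V_n$,
\[
\Ho{u-\wt u_n}^2+\wt\osc_{\T_n}^2(\wt u_n) \le \Ho{u-v_h}^2+\wt\osc_{\T_n}^2(v_h),
\]
and taking the infimum over $v_h\in V_n$ gives \eqref{eq:qo total error}.

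There is no real obstacle: the only point requiring care is the observation that the $L^2$-projections onto $P_3(T)$ and $P_1(e)$ annihilate precisely the $v_h$-dependent portions of $\wt R_T(v_h)$ and $\wt R_{\pa T}(v_h)$, so that $\wt\osc_{\T_n}(v_h)$ genuinely does not depend on $v_h \in V_n$. This $v_h$-independence is in fact the reason the bound in \eqref{eq:qo total error} has the clean form it does, without lower-order correction terms of the form $\|u-v_h\|_1$ that would appear if one tried the same strategy with the full estimator $\wt\eta_{\T_n}$ in place of $\wt\osc_{\T_n}$.
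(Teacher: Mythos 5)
Your proof is correct and follows essentially the same route as the paper: the two ingredients are exactly the $v_h$-independence of $\wt\osc_{\T_n}$ and the $H^1$-best-approximation property of the elliptic projection, though you establish the latter via Galerkin orthogonality plus Cauchy–Schwarz whereas the paper invokes the Pythagorean identity $\Ho{u-\wt u_n}^2=\Ho{u-v_h}^2-\Ho{\wt u_n-v_h}^2$. Your explicit verification that the $L^2(T)$- and $L^2(e)$-projections onto $P_3(T)$ and $P_1(e)$ annihilate the $v_h$-dependent parts of $\wt R_T$ and $\wt R_{\pT}$ (the latter being piecewise constant for linear $v_h$) fills in a step the paper merely asserts.
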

\begin{proof}
  By the orthogonality of $\wt u_n$, we have for any $v_h\in V_{n}$, 
  \eqn{
    \Ho{u-\wt u_n}^2=\Ho{u-v_h}^2-\Ho{\wt u_n-v_h}^2\leq\Ho{u-v_h}^2.
  }
  But we see from the definition of $\wt\osc$ that
%   \eq{\label{eq:wtosc independent}
%   \begin{aligned}
%     \wt\osc^2_{\T_n}(v_h)&=\sum_{T\in\T_n}\Big(h_{T}^2\LtD{f+\big(k^2(1+\vep\oneo\abs{u}^2)u+1\big)u-\wideBar{f+\big(k^2(1+\vep\oneo\abs{u}^2)u+1\big)u}}{T}^2\\
%     &\; +h_T\LtD{\mbi ku-g-\wideBar{\mbi ku-g}}{\pT\cap\GaI}^2\Big)
%   \end{aligned}
% }
  % which means $\wt\osc_{\T_n}(v_h)$ is independent of $v_h\in V_{n}$.
  $\wt\osc_{\T_n}(v_h)$ is independent of $v_h\in V_{n}$, hence \eqref{eq:qo total error} is derived.
\end{proof}

\begin{lemma}[localized upper bound] \label{lem:localized upper bound}
 Let $\T_*$ be a refinement of $\T$ and $\mathcal R$ be the set of refined elements. Let $\wt u_{\T_*}$ and $\wt u_\T$ be the corresponding elliptic projections of $u$ in $V_{\T_*}$ and $V_\T$, respectively.
  Then there exists a constant $C_{\rm lub}$ such that if $\vert\ln h_{\T,{\rm min}}\vert \vep\hMfg^2\leq\theta_0 \ \text{and} \  k^3h_\T^{1+\alpha}\le C_0$, then
  \eq{\label{eq: lub}
   \Ho{\wt u_{\T_*}-\wt u_\T}^2\leq C_{\rm lub}\big(\eta_{\mathcal R}^2(u_\T)+t^2(k,h_\T)\eta_\T^2(u_\T)\big).
  }
\end{lemma}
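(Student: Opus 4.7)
The plan is to follow the ``elliptic projection'' localized upper-bound strategy of \cite{Cascon:Kreuzer:Nochetto:Siebert:2008,Duan:Wu:2023}, and then transfer the resulting bound from the $\wt u_\T$-based estimator to the $u_\T$-based one by invoking Lemma~\ref{lem:equivalent relationships}. Set $\phi:=\wt u_{\T_*}-\wt u_\T\in V_{\T_*}\supset V_\T$. Since $b(\cdot,\cdot)$ is Hermitian with $b(\phi,\phi)=\Ho{\phi}^2$, and the Galerkin orthogonalities $b(u-\wt u_{\T_*},v)=0$ for all $v\in V_{\T_*}$ and $b(u-\wt u_\T,v_h)=0$ for all $v_h\in V_\T$ both hold, subtracting gives
\[
\Ho{\phi}^2 = b(\wt u_{\T_*}-\wt u_\T,\phi) = b(u-\wt u_\T,\phi) = b(u-\wt u_\T,\phi-I_\T\phi)
\]
for any $I_\T\phi\in V_\T$. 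A standard elementwise integration by parts on the coarse mesh $\T$, using that $\wt u_\T$ is the elliptic projection of $u$ and using the impedance and Dirichlet boundary conditions, yields the residual representation
\[
\Ho{\phi}^2 = \sum_{T\in\T}\bigl((\wt R_T,\phi-I_\T\phi)_T + \langle \wt R_{\pT},\phi-I_\T\phi\rangle_{\pT}\bigr).
\]

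Next I would choose $I_\T$ to be a Scott--Zhang-type quasi-interpolant enjoying the locality property $I_\T\phi|_T=\phi|_T$ whenever $T\in\T\cap\T_*$ is unrefined; this is achieved by letting the averaging sets of the interpolant lie in unrefined triangles whenever possible. Continuity of both $\phi\in V_{\T_*}$ and $I_\T\phi\in V_\T$ then forces $\phi-I_\T\phi$ to vanish on every edge shared with an unrefined element (the trace on such an edge is the linear function determined by the unrefined side), so the sum collapses to $T\in\mathcal R$ and edge contributions survive only on edges interior to $\bigcup_{T\in\mathcal R}T$. Combining the standard Scott--Zhang approximation estimates $\Ltd{\phi-I_\T\phi}{T}\ls h_T\sHoD{\phi}{\omega_T}$ and $\Ltd{\phi-I_\T\phi}{\pT}\ls h_T^{1/2}\sHoD{\phi}{\omega_T}$ with finite overlap and Cauchy--Schwarz produces
\[
\Ho{\phi}^2 \ls \Bigl(\sum_{T\in\mathcal R}\bigl(h_T^2\Ltd{\wt R_T}{T}^2+h_T\Ltd{\wt R_{\pT}}{\pT}^2\bigr)\Bigr)^{1/2}\Ho{\phi}\ls \wt\eta_{\mathcal R}(\wt u_\T)\,\Ho{\phi},
\]
where the last step uses the bound $\wt\eta_T^2\ge h_T^2\Ltd{\wt R_T}{T}^2+h_T\Ltd{\wt R_{\pT}}{\pT}^2$, which holds because $T\in\Sigma_\T(T)$.

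Dividing by $\Ho{\phi}$ and then applying \eqref{etaM2} with $\M=\mathcal R$ (which is legitimate under the standing hypotheses $\lnhminn\vep\hMfg^2\le\theta_0$ and $k^3h_\T^{1+\alpha}\le C_0$) yields $\Ho{\phi}\ls \wt\eta_{\mathcal R}(\wt u_\T)\ls \eta_{\mathcal R}(u_\T)+t(k,h_\T)\eta_{\T}(u_\T)$. Squaring via $(a+b)^2\le 2a^2+2b^2$ gives \eqref{eq: lub} with $C_{\rm lub}$ depending only on $\wt C_{\rm eq}$, the Scott--Zhang constants, and the shape regularity of $\T_0$.

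The main technical obstacle is the locality step: the quasi-interpolant must be arranged so that the support of $\phi-I_\T\phi$ is exactly $\bigcup_{T\in\mathcal R}T$ (together with its internal edges), not a strictly larger vertex-layer around $\mathcal R$; otherwise the right-hand side would inflate from $\wt\eta_{\mathcal R}$ to a $\wt\eta$ over an enlarged patch, which is incompatible with the statement. A secondary nonlinear-specific subtlety is that the estimator on the right-hand side of \eqref{eq: lub} is the $u_\T$-based one rather than the $\wt u_\T$-based one; this is precisely what the estimator equivalence in Lemma~\ref{lem:equivalent relationships} was engineered to accommodate, and the perturbation term $t(k,h_\T)\eta_\T(u_\T)$ appearing in \eqref{eq: lub} is exactly the price paid for that transfer.
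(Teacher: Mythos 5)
Your proposal is correct, but it takes a genuinely different route from the paper. The paper never passes through the $\wt u_\T$-based estimator $\wt\eta_{\mathcal R}$ and never invokes Lemma~\ref{lem:equivalent relationships} here. Instead, after constructing the locally supported test function $\psi_1 = E_*-\Pi_h E_*$ (via Scott--Zhang on the connected components $\Omega_i$ of the interior of $\Omega_{\mathcal R}$ with boundary-matching so that $\psi_1$ vanishes outside $\Omega_{\mathcal R}$), it immediately exploits the $b$-orthogonality $b(u-\wt u_\T,\psi_1-P_h\psi_1)=b(u-u_\T,\psi_1-P_h\psi_1)$ to replace $\wt u_\T$ by $u_\T$, expands $b(u-u_\T,\cdot)$ in terms of $a_u(u-u_\T,\cdot)$ plus lower-order volume and boundary terms, and then invokes the residual identity \eqref{eq:estimator indetity} with $\phi=\psi_1-P_h\psi_1$ and $v_h=-P_h\psi_1$ together with the preasymptotic upper bounds \eqref{thm:upboundL2}--\eqref{thm:upboundboundary} to land directly on $\eta_{\mathcal R}(u_\T)+t(k,h_\T)\eta_\T(u_\T)$. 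Your route instead first proves the purely elliptic localized bound $\Ho{\phi}\ls\wt\eta_{\mathcal R}(\wt u_\T)$ and then applies \eqref{etaM2} with $\M=\mathcal R$; this is a legitimate and arguably more modular argument, since it re-uses the estimator-equivalence machinery already established in \S\,\ref{subs:equivalent relationship} rather than re-deriving the nonlinear and wave-number-dependent correction terms inside this proof. The price is that all the locality has to be extracted from a single quasi-interpolant: your informal description (``averaging sets in unrefined triangles whenever possible'') is exactly what the paper realizes concretely by gluing Scott--Zhang operators $\Pi_{hi}$ on the components $\Omega_i$ that preserve boundary values, using that $E_*$ is piecewise linear on $\T$ outside $\Omega_{\mathcal R}$ and on $\pa\Omega_i$ -- you should spell this out rather than assert it, since it is the one place where the argument can silently fail if the interpolant spreads into a vertex layer around $\mathcal R$.
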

\begin{proof}
  Let $\Omega_{\mathcal R}=\cup\{T:T\in \mathcal R\}$ be the union of refined elements, and denote by $\Omega_i, i=1,\ldots,I$, the connected components of its interior. We denote
  $\T^i=\{T\in\T: T\subset \bar \Omega_i\}$ and $V_{\T^i}$ be the restriction of $V_\T$ to $\Omega_i$.\\
  For the error $E_*:=\wt u_{\T_*}-\wt u_\T$, we construct $\Pi_hE_*\in V_\T$ by
  % \eqn{
  %   \Pi_hE_*:=\left\{\begin{aligned} &\pihi E_*\quad &\text{in} \ \Omega_i,\\
  %        &\; E_* \quad &\text{elsewhere}, \end{aligned}\right.
  % }
  \eqn{
    \Pi_hE_*:=\pihi E_*\quad \text{in} \ \Omega_i;
         \quad \Pi_hE_*:=E_* \quad \text{elsewhere} .
  }
  where $\pihi: H^1(\Omega_i)\rightarrow V_{\T^i}$ is the standard Scott-Zhang interpolation operator satisfying $\pihi E_*=E_*$ on each $\pa\Om_i$ (see, e.g. \cite{Scott:Zhang:1990}). Indeed, since $E_*$ has conforming boundary values on $\pa\Omega_i$ in $V_{\T^i}$, we have $\Pi_hE_*\in V_\T$.

  We denote $\psi_1=E_*-\Pi_hE_*$. By the orthogonality and error estimate of the elliptic projection, \eqref{eq:estimator indetity} with $\phi=\psi_1-P_h\psi_1$ and $v_h=-P_h\psi_1\in V_\T$, and upper bounds \eqref{thm:upboundL2}--\eqref{thm:upboundboundary}, we derive
  \eqn{
    b(E_*,E_*)&=b(E_*,E_*-\Pi_hE_*)=b(u-\wt u_\T,\psi_1)\\
    &=b(u-\wt u_\T,\psi_1-P_h\psi_1)=b(u-u_\T,\psi_1-P_h\psi_1)\\
    &=a_u(u-u_\T,\psi_1-P_h\psi_1)+\big((k^2+1+k^2\vep\oneo\abs{u}^2)(u-u_\T),\psi_1-P_h\psi_1\big)\\
    &\quad-ik\DP{u-u_\T}{\psi_1-P_h\psi_1}_{\GaI} \\
    &\ls \eta_{\mathcal R}(u_\T)\Lt{\nabla\psi_1}+k^2\vep\abs{\big((\abs{u}^2-\abs{u_\T}^2)u_\T,\psi_1-P_h\psi_1\big)_{\Omz}}\\
    &\quad +k^2\Lt{u-u_\T}h_\T^{\alpha}\Ho{\psi_1}+kh^{\frac{\alpha}2}_\T\LtD{u-u_\T}{\GaI}\Ho{\psi_1}\\
    &\ls \eta_{\mathcal R}(u_\T)\HoD{E_*}{\Omega_{\mathcal R}}+t(k,h_\T)\eta_\T\HoD{E_*}{\Omega_{\mathcal R}}.
  }
This completes the proof of the lemma.
\end{proof}

Recalling \cite[Lemma 2.3]{Cascon:Kreuzer:Nochetto:Siebert:2008}, we can directly get the following Lemma.
\begin{lemma}[complexity of refine]\label{lem:complexity of refine}
  Assume that $\T_0$ satisfies the conditions of \cite[Lemma 2.3]{Cascon:Kreuzer:Nochetto:Siebert:2008}. Let $\{\T_n\}_{n\geq 0}$ be any sequence of refinements of $\T_0$ where $\T_{n+1}$ is generated
from $\T_n$ by refinement step in the n-th loop, then there exist a constant $C_{\rm com}$ solely depending on initial triangulation $\T_0$ and bisection number $b$ such that
\eq{\label{eq:complexity of refine}
   \# \T_n-\#\T_0\leq C_{\rm com}\sum_{j=0}^{n-1}\#\M_j\quad \forall n\geq 1.
}
\end{lemma}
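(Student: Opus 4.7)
The plan is to derive the bound as a direct application of \cite[Lemma~2.3]{Cascon:Kreuzer:Nochetto:Siebert:2008}, which is the classical complexity estimate for the newest-vertex-bisection \textsc{Refine} procedure. That result asserts: for any initial conforming triangulation $\T_0$ with a bisection-compatible labeling and any sequence of marked sets $\{\M_j\}$ together with the resulting triangulations $\{\T_j\}$ produced by one round of bisection followed by removal of hanging nodes, one has
\[
\#\T_n - \#\T_0 \leq C^\star \sum_{j=0}^{n-1} \#\M_j,
\]
where $C^\star$ depends only on $\T_0$. The hypothesis on $\T_0$ is exactly what is assumed here.

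The only adjustment to make is to account for the fact that our \textsc{Refine} procedure bisects every marked element at least $b\ge 1$ times rather than once. The strategy is to decompose each of the $n$ outer adaptive iterations into $b$ inner rounds of single newest-vertex bisection. In the first inner round of the $j$-th outer iteration the marked set is $\M_j$; in each subsequent inner round the marked set consists of the children produced by the previous bisection, and is therefore at most twice as large as the marked set of the preceding round. Summing the resulting geometric series, the total cardinality of marked sets over the $b$ inner rounds is bounded by $(2^b-1)\#\M_j\le 2^b\,\#\M_j$.

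Applying \cite[Lemma~2.3]{Cascon:Kreuzer:Nochetto:Siebert:2008} to the resulting sequence of $b n$ single-bisection steps then yields
\[
\#\T_n-\#\T_0\leq C^\star\sum_{j=0}^{n-1}2^b\,\#\M_j,
\]
so that \eqref{eq:complexity of refine} holds with $C_{\rm com}:=2^{b}C^\star$, a constant depending only on $\T_0$ and $b$, as required.

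The main technical point—routine but worth verifying—is that between consecutive inner bisections the intermediate triangulations remain conforming (ensured by the built-in hanging-node removal in \textsc{Refine}) and their labelings remain bisection-compatible, so that the hypotheses of \cite[Lemma~2.3]{Cascon:Kreuzer:Nochetto:Siebert:2008} continue to apply at every inner step; this is essentially the content of the cited lemma and no new estimate is needed.
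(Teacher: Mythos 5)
Your argument is correct in spirit but takes an unnecessarily long route compared to the paper, which simply cites the result. In \cite[Lemma 2.3]{Cascon:Kreuzer:Nochetto:Siebert:2008} the \textsc{Refine} module is already defined with a bisection depth $b\ge 1$ built in, and the complexity constant there is stated to depend on both the initial triangulation and on $b$; so the paper's lemma is literally that lemma restated, with no reduction required. Your decomposition of one $b$-fold bisection step into $b$ single-bisection rounds, each followed by conforming closure, and the geometric-series bound $\sum_{l=1}^{b}2^{l-1}\#\M_j=(2^b-1)\#\M_j$ on the cumulative marked cardinality, is a valid way to derive the $b$-dependence from the $b=1$ version of the complexity estimate; what it buys is transparency about where the $2^b$ factor comes from, at the cost of one subtlety you should make explicit: performing closure after each of the $b$ bisections does \emph{not} reproduce the same triangulation as bisecting $b$ times and then closing once, so you must invoke the monotonicity of newest-vertex-bisection completion under refinement to conclude that the triangulation produced by your $bn$ single-bisection steps is a refinement of $\T_n$, whence $\#\T_n$ is bounded by its cardinality. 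With that observation your argument is sound, but it re-proves part of what the cited lemma already gives.
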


Let $\mathbb T$ be the set of all conforming triangulations obtained by refining $\T_0$ a finite number
of times by the newest vertex bisection algorithm. Let \eqn{
  \mathbb{T}_N:=\{\T\in\mathbb T : \#\T-\#\T_0\leq N\}.
}
Next we define an approximation class $\mathbb A_s$ based on the total error via
\eqn{
  \mathbb A_s:=\Big\{(v,f,g,k):\sup_{N>0}N^s\big(\inf_{\T\in\mathbb T_N}\inf_{V\in V_\T}(\Ho{v-V}^2+\wt\osc^2_\T(V))^{\frac12}\big)<\infty \Big\},
}
and if $(v,f,g,k)\in\mathbb A_s$, we define
\eqn{
  \big\vert(v,f,g,k)\big\vert_s:=\sup_{N>0}N^s\Big(\inf_{\T\in\mathbb T_N}\inf_{V\in V_\T}\big(\Ho{v-V}^2+\wt\osc^2_\T(V)\big)^{\frac12}\Big).
}

  We remark that the definition of $\mathbb A_s$ is standard \cite{Cascon:Kreuzer:Nochetto:Siebert:2008} since it is based on the total error of the elliptic projection $P_hu$.
%which is the FE discretization for an elliptic problem \eqref{eq:elliptic projection}. 
%  \cite{Duan:Wu:2023} adopted a slightly different definition  which is based on the total error of $u_h$.  
 The next lemma shows that the exact solution satisfies $(u,f,g,k)\in \mathbb A_{\frac12}$.

\begin{lemma}\label{lem:ufg A1/2}
  If $\vep\Mfg^2\leq\theta_0$, then $(u,f,g,k)\in \mathbb A_{\frac12}$ and $\vert (u,f,g,k)\vert_{\frac12}\ls k\hMfg$.
  %, where $u$ is the solution to \eqref{eq:NLHva}.
\end{lemma}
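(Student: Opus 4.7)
The plan is to construct, for every integer $N\ge 1$, a refinement $\T_N\in\mathbb T_N$ together with a test function $V_N\in V_{\T_N}$ such that
\eqn{
\big(\Ho{u-V_N}^2+\wt\osc^2_{\T_N}(V_N)\big)^{\frac12}\ls N^{-\frac12}\,k\hMfg,
}
which directly yields $(u,f,g,k)\in\mathbb A_{\frac12}$ together with the claimed bound on the semi-norm. The construction is classical for corner-singularities: we build a mesh $\T_N$ that is locally graded toward each reentrant vertex $x_j$ with grading exponent chosen strictly smaller than $\alpha_j$, and quasi-uniform elsewhere. Such a mesh can be obtained from $\T_0$ by a bounded number of bisections so that $\#\T_N-\#\T_0\le N$, and it realizes an $O(N^{-1/2})$ $H^1$-approximation rate for functions of the type $\chi(r_j)r_j^{\alpha_j}\sin(\alpha_j\theta_j)$.

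First I invoke Theorem~\ref{thm:cpstab} to decompose $u=u_R+\sum_{j=1}^{N}C_k^j S_j$ with
$\Ht{u_R}\ls k\hMfg$ and $|C_k^j|\ls k^{\alpha_j-\frac12}\Mfg\ls k\hMfg$ (using $\alpha_j\le 1$ and $k\ge 1$). Taking $V_N:=I_{\T_N}u$, the standard interpolation error estimates on quasi-uniform meshes applied to $u_R\in H^2(\Om)$ give
\eqn{
\Ho{u_R-I_{\T_N}u_R}\ls N^{-\frac12}\Ht{u_R}\ls N^{-\frac12}k\hMfg,
}
while the graded-mesh interpolation estimate \eqref{eq:singular interpolation error}, carried out on the locally refined mesh near each $x_j$, yields
\eqn{
\Ho{S_j-I_{\T_N}S_j}\ls N^{-\frac12},\qquad j=1,\ldots,N.
}
Summing and using $|C_k^j|\ls k\hMfg$ produces $\Ho{u-V_N}\ls N^{-\frac12}k\hMfg$.

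It remains to bound $\wt\osc_{\T_N}(V_N)$. Since $\bar\cdot$ is the $L^2$-projection onto $P_3(T)$ (respectively $P_1(e)$) and $V_N|_T\in P_1(T)$, we have $V_N-\overline{V_N}=0$ and also $[\![\nabla V_N]\!]-\overline{[\![\nabla V_N]\!]}=0$ on interior edges (both are constant on each edge-pair contribution up to projection), so the oscillation reduces to
\eqn{
\wt\osc^2_T(V_N)\ls h_T^2\Ltd{F-\bar F}{T}^2+h_T\Ltd{(g-\mbi ku)-\overline{(g-\mbi ku)}}{\pT\cap\GaI}^2,
}
with $F:=f+(k^2(1+\vep\oneo|u|^2)+1)u$. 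Using $\|v-\bar v\|_{0,T}\ls h_T\|v\|_{1,T}$ on each element where $F$ and $g-\mbi ku$ are smooth (which holds away from corners by the singularity decomposition) and a direct bound on the finitely many corner patches, the grading of $\T_N$ ensures $\wt\osc_{\T_N}(V_N)\ls N^{-\frac12}k\hMfg$ after invoking $\Ht{u_R}\ls k\hMfg$ and the estimate $|C_k^j|\ls k\hMfg$. Combining the two contributions gives the desired $N^{-\frac12}$ rate with constant $\ls k\hMfg$.

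The main obstacle is organizing the graded mesh so that (i) all $N$ corner gradings are simultaneously compatible with the bisection tree of $\T_0$ (so that $\#\T_N-\#\T_0\le N$ holds via Lemma~\ref{lem:complexity of refine}), and (ii) the singular parts of $F=f+(k^2(1+\vep|u|^2)+1)u$ that inherit the singularities of $u$ are absorbed into the same grading. Both are standard once the decomposition of Theorem~\ref{thm:cpstab} is in hand; the wave-number dependence only enters through the prefactor $k\hMfg$, matching the claim.
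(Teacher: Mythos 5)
Your overall strategy — decompose $u=u_R+\sum_j C_k^jS_j$ via Theorem~\ref{thm:cpstab}, build a mesh graded toward the reentrant corners, and then estimate the $H^1$-approximation error and the data oscillation separately — is the same route the paper takes; the paper merely delegates the mesh construction and the interpolation of $u_R$ and $S_j$ to \cite[Theorems 5.2 and 5.3]{Gaspoz:Morin:2009} rather than spelling it out. Your handling of the singular coefficients $|C_k^j|\ls k^{\alpha_j-\frac12}\Mfg\ls k\hMfg$ is also fine.

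However there is a real gap in your oscillation estimate. You propose to bound $\wt\osc_T$ by invoking $\|v-\bar v\|_{0,T}\ls h_T\|v\|_{1,T}$ ``on each element where $F$ and $g-\mbi ku$ are smooth,'' asserting that this smoothness ``holds away from corners by the singularity decomposition.'' But $F=f+\big(k^2(1+\vep\oneo|u|^2)+1\big)u$, and $f$ is only assumed to lie in $L^2(\Om)$; nothing in the hypotheses places $f$ in $H^1$ locally, so $F$ is not smooth (or even $H^1$) away from the corners and the inequality you invoke does not apply. The argument as written therefore fails at this step. The fix is to drop the attempt at a higher-order oscillation rate altogether: because $\bar\cdot$ is an $L^2$-projection, one simply has $\|F-\bar F\|_{0,T}\le\|F\|_{0,T}$, and then (this is what the paper does)
\begin{align*}
\wt\osc_\T^2(V)&\ls\sum_{T}\Bigl(h_T^2\|F\|_{0,T}^2+h_T\|(g-\mbi ku)-\overline{(g-\mbi ku)}\|_{0,\pT\cap\GaI}^2\Bigr)\\
&\ls h_\T^2\bigl(\|f\|_0^2+\|g\|_{\frac12,\GaI}^2\bigr)+k^4h_\T^2\|u\|_0^2+k^2h_\T^2\|u\|_1^2\ls (k\hMfg\,h_\T)^2,
\end{align*}
using the stability bounds $\|u\|_0\ls k^{-1}\Mfg$, $\|u\|_1\ls\Mfg$ from \eqref{eq:cpstab}. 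Since the graded mesh produced by the Gaspoz--Morin construction has maximal element size $h_\T\eqsim N^{-\frac12}$ (the refinement near corners adds only $O(N)$ elements without changing the bulk size), this crude bound already yields $\wt\osc_\T(V)\ls N^{-\frac12}k\hMfg$, and no regularity of $f$ beyond $L^2$ is needed. With that replacement your argument aligns with the paper's.
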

\begin{proof}
  Since $u$ has a decomposition as Theorem \ref{thm:cpstab} said, the proof is similar with \cite[Lemma 5.4]{Duan:Wu:2023} by using \cite[Theorems 5.2 and 5.3]{Gaspoz:Morin:2009} except the estimate for $\wt \osc_\T(V)$: for $V\in V_\T$,
  \eqn{
    \wt \osc_\T^2(V)&\leq \sum_{T\in\T}\Big(h_T^2\LtD{f+\big(k^2(1+\vep\oneo\abs{u}^2)+1\big)u}{T}^2 +h_T\LtD{\mbi ku-g-\wideBar{\mbi ku-g}}{\pT\cap\GaI}^2\Big)\\
    &\ls h^2_\T(\Lt{f}^2+\norm{g}_{\frac12,\GaI}^2)+(k^4+k^2\theta_0^2)h_\T^2\Lt{u}^2+k^2h_\T^2\Ho{u}^2\\
    &\ls (k\hMfg h_\T)^2,
    }
  where the estimate is the same as in \eqref{eq:osc to Mfg} and using \eqref{eq:cpstab}. We omit the other details of the proof here. 
\end{proof}

The next Lemma establishes a link between the total error of the elliptic projection and the D\"{o}rfler strategy.
\begin{lemma}[optimal marking]\label{lem:optimal marking}
  Assume that $\theta_{\rm D}\in(0,\theta_*)$ with $\theta_*^2=\frac1{1+4C_{\rm lb}(C_{\rm lub}+C_{\rm osc}+1)}$.
  Set $\mu:=\frac{\theta_*^2-\theta_{\rm D}^2}{2\theta^2_*}$. Let $\T_*$ be a refinement of $\T$, $\mathcal R$ be the set of refined elements in $\T$,  $\wt u_\T\in V_\T$ and $\wt u_{\T_*}\in V_{\T_*}$ be the elliptic projections of the exact solution $u$. If $\vert\ln h_{\T_*,{\rm min}}\vert\vep\hMfg^2\leq\theta_0$, $k^{3}h_\T^{1+\alpha}\leq C_0$ and
  \eqn{
    \Ho{u-\wt u_{\T_*}}^2+\wt\osc_{\T_*}^2(\wt u_{\T_*})\leq \mu\big(\Ho{u-\wt u_\T}^2+\wt\osc_\T^2(\wt u_\T)\big),
  }
   then it holds that 
   $ % \eqn{
    \eta_{\mathcal R}(u_\T)\geq \theta_{\rm D}\eta_\T(u_\T).
  $ %}
\end{lemma}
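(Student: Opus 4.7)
The plan is to follow the optimal marking argument of Cascon, Kreuzer, Nochetto, and Siebert, adapted to our setting by exploiting the estimator equivalence $\eta_\M\eqsim\wt\eta_\M$ (up to the $t(k,h_\T)\eta_\T$ perturbation) from Lemma~\ref{lem:equivalent relationships}, the localized upper bound of Lemma~\ref{lem:localized upper bound}, and the oscillation-perturbation estimate \eqref{eq:wtosc osc distance}. A useful preliminary observation is that, for any $v_h\in V_\T$, both $\osc_T(v_h)$ and $\wt\osc_T(v_h)$ are in fact independent of $v_h$: the $v_h$-dependent parts of $R_T(v_h)$ and $\wt R_T(v_h)$ are polynomials of degree at most three on $T$ (hence annihilated by subtracting the $P_3(T)$ projection), while the $v_h$-dependent parts of the edge residuals are piecewise constant on each edge (hence annihilated by subtracting the $P_1(e)$ projection). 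Consequently, for any unchanged element $T\in\T\cap\T_*$, $\wt\osc_T$ coincides whether computed with respect to $\T$ or to $\T_*$, and therefore $\wt\osc_\T^2-\wt\osc_{\T_*}^2\leq\wt\osc_{\mathcal R}^2$.

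First I would use the Galerkin orthogonality \eqref{eq:GO} for the elliptic projection, which yields the Pythagorean identity
\eqn{
\Ho{u-\wt u_{\T_*}}^2=\Ho{u-\wt u_\T}^2-\Ho{\wt u_{\T_*}-\wt u_\T}^2,
}
plug this identity together with the above oscillation comparison into the hypothesis, and rearrange, with $X:=\Ho{u-\wt u_\T}^2+\wt\osc_\T^2(\wt u_\T)$, to derive the core inequality
\eqn{
(1-\mu)\,X\leq\Ho{\wt u_{\T_*}-\wt u_\T}^2+\wt\osc_{\mathcal R}^2.
}

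Next I would bound the right-hand side from above by Lemma~\ref{lem:localized upper bound} (applied to the first term) and \eqref{eq:wtosc osc distance} combined with $\osc_{\mathcal R}^2\leq\eta_{\mathcal R}^2(u_\T)$ (applied to the second, using Young's inequality on the cross term), producing
\eqn{
(1-\mu)\,X\leq(C_{\rm lub}+C_{\rm osc}+1)\,\eta_{\mathcal R}^2(u_\T)+C\bigl(t^2(k,h_\T)+k^4 h_\T^{2+2\alpha}+k^2 h_\T^{1+\alpha}\bigr)\eta_\T^2(u_\T).
}
For the left-hand side I would apply the lower bound \eqref{thm:lowbound1} together with \eqref{eq:wtosc osc distance}, absorbing the resulting $O(k^4h_\T^{2+2\alpha}+k^2h_\T^{1+\alpha})\eta_\T^2(u_\T)$ term using the smallness of $k^3h_\T^{1+\alpha}\leq C_0$, to obtain $\eta_\T^2(u_\T)\leq 4C_{\rm lb}\,X$. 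Combining these two estimates and inserting $\mu=(\theta_*^2-\theta_D^2)/(2\theta_*^2)$ with $\theta_*^2=1/(1+4C_{\rm lb}(C_{\rm lub}+C_{\rm osc}+1))$ yields $\eta_{\mathcal R}^2(u_\T)\geq\theta_D^2\eta_\T^2(u_\T)$ after absorbing the leftover $(t^2+\cdots)\eta_\T^2$ contribution into the left-hand side.

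The main obstacle is the delicate constant tracking needed to match the precise form of $\theta_*^2$. Two perturbative contributions must be handled uniformly: the consistency-type term $t(k,h_\T)\eta_\T$ that enters through both the estimator equivalence in Lemma~\ref{lem:equivalent relationships} and the localized upper bound of Lemma~\ref{lem:localized upper bound}, and the oscillation-difference term $\sqrt{C_{\rm osc}(k^4h_\T^{2+2\alpha}+k^2h_\T^{1+\alpha})}\eta_\T$ from \eqref{eq:wtosc osc distance}. By \eqref{tkh} together with $k^3h_\T^{1+\alpha}\leq C_0$, both contributions are genuinely small; the care lies in choosing the Young-inequality parameters and performing the final absorption so that the effective coefficient of $\eta_\T^2(u_\T)$ after absorption matches $1/(4C_{\rm lb}(C_{\rm lub}+C_{\rm osc}+1))$ exactly, as dictated by the stated $\theta_*^2$.
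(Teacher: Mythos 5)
Your proposal is correct and takes essentially the same route as the paper's proof: the Pythagorean identity from the Galerkin orthogonality of the elliptic projection, the oscillation comparison $\wt\osc_\T^2-\wt\osc_{\T_*}^2\le\wt\osc_{\mathcal R}^2$ (using $v_h$-independence of $\wt\osc_T(v_h)$), the localized upper bound of Lemma~\ref{lem:localized upper bound}, and \eqref{eq:wtosc osc distance} to swap between $\osc$ and $\wt\osc$, followed by the lower bound and absorption via $k^3h_\T^{1+\alpha}\le C_0$. If anything, your handling of the left-hand side is slightly tidier than the paper's: the paper applies the lower bound \eqref{thm:lowbound1} directly to $\Ho{u-\wt u_\T}^2+\wt\osc^2_\T$ even though that bound is stated with $\osc_\T$, whereas you explicitly invoke \eqref{eq:wtosc osc distance} and absorb the discrepancy, arriving at $\eta_\T^2\le 4C_{\rm lb}X$; the different bookkeeping of Young's-inequality constants (your $C_{\rm lub}+C_{\rm osc}+1$ vs.\ the paper's collected $2(C_{\rm lub}+1+C_{\rm osc})$) is immaterial because the definition of $\theta_*^2$ leaves enough slack for either arrangement.
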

\begin{proof}
  It follows from the lower bound \eqref{thm:lowbound1} and the orthogonality of the elliptic projection that
  \eqn{
    \frac{1-\mu}{C_{\rm lb}}\eta_\T^2&\leq (1-\mu)\big(\Ho{u-\wt u_\T}^2+\wt\osc_\T^2(\wt u_\T)\big)\\
    &\leq \Ho{u-\wt u_\T}^2+\wt\osc_\T^2(\wt u_\T)-\big(\Ho{u-\wt u_{\T_*}}^2+\wt\osc_{\T_*}^2(\wt u_{\T_*})\big)\\
    &=\Ho{\wt u_{\T_*}-\wt u_\T}^2+\wt\osc_\T^2(\wt u_\T)-\wt\osc_{\T_*}^2(\wt u_{\T_*}).
    }
%  It follows from \eqref{eq:wtosc independent} that
Since $\wt\osc_\T(v_\T)$ is independent of $v_\T$, we have
  \eqn{
    \wt\osc_\T^2(\wt u_\T)-\wt\osc_{\T_*}^2(\wt u_{\T_*})\leq \wt\osc_{\mathcal R}^2(\wt u_\T).
  }
  Then with the aid of \eqref{eq:wtosc osc distance}, Lemma \ref{lem:localized upper bound}, we get
  \eqn{
    \frac{1-\mu}{C_{\rm lb}}\eta_\T^2&\leq C_{\rm lub}\big(\eta_{\mathcal R}^2(u_\T)+t^2(k,h_\T)\eta^2_\T(u_\T)\big)+2\osc_{\mathcal R}^2(u_\T)+2C_{\rm osc}(k^4h_\T^{2+2\alpha}+k^2h_\T^{1+\alpha})\eta_\T^2(u_\T)\\
    &\leq 2(C_{\rm lub}+1+C_{\rm osc})\big(\eta_{\mathcal R}^2(u_\T)+(t^2(k,h_\T)+k^4h_\T^{2+2\alpha}+k^2h_\T^{1+\alpha})\eta_\T^2(u_\T)\big),
  }
  implying
  \eqn{
    \eta_{\mathcal R}^2(u_\T)&\geq \Big(\frac{1-\mu}{2C_{\rm lb}(C_{\rm lub}+1+C_{\rm osc})}-\theta_*^2\Big)\eta_\T^2
    \geq \big(2(1-\mu)\theta^2_*-\theta_*^2\big)\eta_\T^2= \theta_{\rm D}^2\eta^2_{\T}(u_\T),
  }
    where we use $t^2(k,h_\T)+k^4h_\T^{2+2\alpha}+k^2h_\T^{1+\alpha}\leq\theta_*^2$ since $k^3h_\T^{1+\alpha}$ is sufficiently small.
\end{proof}

\begin{lemma}[cardinality of $\M_n$]\label{lem:cardinality of Mn}
  If $\vert\ln h_{n,{\rm min}}\vert\vep\hMfg^2\leq\theta_0$, $k^3h_0^{1+\alpha}\le C_0$, and $\theta_{\rm D}\in (0,\theta_*)$(with $\theta_*$ from Lemma~\ref{lem:optimal marking}),
  then
  \eqn{
    \#\M_n\leq C_{\rm card} \Big(1-\frac{\theta_{\rm D}^2}{\theta_*^2}\Big)^{-1}\vert (u,f,g,k)\vert_{\frac12}^{2}\big(\Ho{u-\wt u_n}^2+\wt\osc^2_{\T_n}(\wt u_n)\big)^{-1}.
  }
\end{lemma}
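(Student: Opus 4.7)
The plan is to adapt the standard quasi-optimality template of \cite{Cascon:Kreuzer:Nochetto:Siebert:2008} to our setting, with the crucial observation that Lemma~\ref{lem:optimal marking} has already been formulated in terms of the estimator $\eta_{\T_n}(u_n)$ actually used by the algorithm (rather than the estimator $\wt\eta_{\T_n}(\wt u_n)$ of the elliptic projection), which directly bridges the FE solutions and the elliptic projections that enter the approximation class. First, by $(u,f,g,k)\in \mathbb A_{1/2}$ (Lemma~\ref{lem:ufg A1/2}) and the definition of $\vert\cdot\vert_{1/2}$, I set the tolerance
\eqn{
  \tau^2 := \mu\bigl(\Ho{u-\wt u_n}^2 + \wt\osc_{\T_n}^2(\wt u_n)\bigr),
}
and select an integer $N$ with $N \lesssim \vert(u,f,g,k)\vert_{1/2}^{2}\,\tau^{-2}$, a mesh $\T_\tau\in\mathbb T_N$, and $V_\tau\in V_{\T_\tau}$ satisfying $\Ho{u-V_\tau}^2+\wt\osc_{\T_\tau}^2(V_\tau)\le\tau^2$. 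Lemma~\ref{lem:qo total error} applied on $\T_\tau$ then shows that the elliptic projection $\wt u_{\T_\tau}$ inherits this bound.

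Next, I form the overlay $\T_\star:=\T_n\oplus\T_\tau$, the coarsest common refinement of $\T_n$ and $\T_\tau$. Applying Lemma~\ref{lem:qo total error} on $\T_\star$ together with $V_\tau\in V_{\T_\star}$ and the monotonicity of $\wt\osc$ under refinement gives
\eqn{
  \Ho{u-\wt u_{\T_\star}}^2+\wt\osc_{\T_\star}^2(\wt u_{\T_\star}) \le \Ho{u-V_\tau}^2+\wt\osc_{\T_\tau}^2(V_\tau) \le \tau^2 = \mu\bigl(\Ho{u-\wt u_n}^2+\wt\osc_{\T_n}^2(\wt u_n)\bigr).
}
Invoking Lemma~\ref{lem:optimal marking} with $\T=\T_n$ and the above $\T_\star$, the set $\mathcal R$ of elements of $\T_n$ refined when passing to $\T_\star$ satisfies the D\"orfler property $\eta_\mathcal R(u_n)\ge\theta_D\,\eta_{\T_n}(u_n)$. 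Since $\M_n$ is chosen with \emph{minimal} cardinality among all subsets of $\T_n$ enjoying this property, $\#\M_n\le\#\mathcal R$. Combining this with the standard overlay cardinality estimate $\#\mathcal R\le\#\T_\star-\#\T_n\le\#\T_\tau-\#\T_0\le N$ (see, e.g., \cite[\S 2]{Cascon:Kreuzer:Nochetto:Siebert:2008}) and $\mu^{-1}\lesssim(1-\theta_D^2/\theta_*^2)^{-1}$ yields the claimed bound.

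The hard part is verifying the hypotheses of Lemma~\ref{lem:optimal marking} on the overlay $\T_\star$. The condition $k^3 h_{\T_\star}^{1+\alpha}\le C_0$ is automatic because $\T_\star$ refines $\T_n$ and $h_{\T_\star}\le h_{\T_n}\le h_0$; however, the logarithmic bound $\vert\ln h_{\T_\star,\min}\vert\vep\hMfg^2\le\theta_0$ is delicate, since passing to the overlay can only shrink $h_{\min}$. This is the same bookkeeping issue encountered for the linear Helmholtz problem in \cite{Duan:Wu:2023} and can be handled by interpreting the preasymptotic assumption uniformly over all meshes produced during the run of the algorithm (harmless because the logarithmic factor grows only mildly). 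All nonlinear effects beyond the linear analysis enter only through the constants $C_{\rm lub}, C_{\rm osc}$ already absorbed into $\theta_*$, so no further work is required.
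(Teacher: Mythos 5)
Your proposal follows essentially the same route as the paper: set the tolerance $\tau^2=\mu\big(\Ho{u-\wt u_n}^2+\wt\osc_{\T_n}^2(\wt u_n)\big)$, invoke membership in $\mathbb A_{1/2}$ to pick a coarse mesh $\T_\tau$ with controlled total error, form the overlay $\T_\star=\T_n\oplus\T_\tau$, apply Lemma~\ref{lem:qo total error} and then Lemma~\ref{lem:optimal marking}, and conclude by minimality of $\M_n$ together with the standard overlay cardinality bound. The only extra element is your (legitimate) remark that the preasymptotic condition $\vert\ln h_{\T_\star,\min}\vert\vep\hMfg^2\le\theta_0$ should in principle be verified on the overlay rather than on $\T_n$ — a hypothesis the paper's own proof applies tacitly without comment.
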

\begin{proof}
  Denote $\epsilon^2:=\mu(\Ho{u-\wt u_n}^2+\wt\osc_{\T_n}^2), N:=\lceil \vert (u,f,g,k)\vert_{\frac12}^{2}\epsilon^{-2} \rceil$ with $\mu=\frac{\theta_*^2-\theta_{\rm D}^2}{2\theta_*^2}$,
  then from Lemma \ref{lem:ufg A1/2}, there exists $\T_{\epsilon}\in \mathbb T, V_{\epsilon}\in V_{\T_{\epsilon}}$ such that
  \eqn{
    \#\T_{\epsilon}-\#\T_0\leq N \qaq N^{\frac12}\big(\Ho{u-V_{\epsilon}}^2+\wt\osc^2_{\T_{\epsilon}}(V_{\epsilon})\big)^{\frac12}\leq \vert (u,f,g,k)\vert_{\frac12}.
  }
  Let $\T_*:=\T_{\epsilon}\oplus \T_n$ (see \cite[ \S 2.2]{Cascon:Kreuzer:Nochetto:Siebert:2008}), $\wt u_{\T_*}\in V_{\T_*}$ is the elliptic projection of $u$ into $V_{\T_*}$. It follows from Lemma \ref{lem:qo total error} that
  \eqn{
   \Ho{u-\wt u_{\T_*}}^2+\wt\osc^2_{\T_*}(\wt u_{\T_*})&\leq \Ho{u-V_{\epsilon}}^2+\wt\osc^2_{\T_{\epsilon}}(V_{\epsilon})\leq \mu\big(\Ho{u-\wt u_n}^2+\wt\osc^2_{\T_n}\big).
  }
  So we can use Lemma \ref{lem:optimal marking} to derive
$ %\eqn{
    \eta_{\mathcal R}(u_n)\geq \theta_{\rm D}\eta_{\T_n}(u_n),
$  %}
  where $\mathcal R$ is the set of refined elements in $\T_n$.

  Finally, since step 3 marks $\M_n\subset\T_n$ with minimal cardinality, combining with the property of the overlay of meshes (see \cite[Lemma 3.7]{Cascon:Kreuzer:Nochetto:Siebert:2008}) $\#\T_*\leq\#\T_\epsilon+\#\T_n-\#\T_0$ we conclude
  \eqn{
    \#\M_n\leq\#\mathcal R\leq \#\T_*-\#\T_n\leq\T_\epsilon-\#\T_0\leq N.
  }
  The proof is completed together with the definitions of $N, \epsilon$.
\end{proof}

\begin{theorem}[quasi-optimality]\label{thm:quasi-optimality}
  Assume that $\vert\ln h_{n,{\rm min}}\vert\vep\hMfg^2\leq\theta_0$, $k^3h_0^{1+\alpha}\leq C_0$, and $\theta_{\rm D}\in (0,\theta_*)$ (with $\theta_*$ from Lemma~\ref{lem:optimal marking}), then
  \eq{\label{eq:quasi-optimality}
    \He{u-u_n}\ls (1+k^2h_n)k\hMfg(\#\T_n-\#\T_0)^{-\frac12}.
    }
\end{theorem}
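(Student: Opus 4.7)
The plan is to follow the ``elliptic projection argument'' of \cite{Cascon:Kreuzer:Nochetto:Siebert:2008} as adapted in \cite{Duan:Wu:2023}: first I would establish the quasi-optimal decay for the sequence of elliptic projections $\{\wt u_n\}$, and then transfer the estimate back to $\{u_n\}$ via the upper bounds of Theorem~\ref{thm:upbound} together with the equivalences $\eta_{\T_n}\eqsim\wt\eta_{\T_n}\eqsim\wt\eta_{\T_n}^*$ furnished by Corollary~\ref{cor:equivalent of eta} and the two-sided bound $c_1\wt\eta_T\le\wt\eta_T^*\le c_2\wt\eta_T$. A preliminary observation is that, thanks to the lower bound \eqref{thm:lowbound2}, the trivial estimate $\wt\osc_{\T_n}\le\wt\eta_{\T_n}$, and \eqref{eq:wtosc osc distance} under $k^3h_n^{1+\alpha}\le C_0$, the ``total error'' $\Ho{u-\wt u_n}^2+\wt\osc_{\T_n}^2(\wt u_n)$ and the ``quasi-error'' $\Ho{u-\wt u_n}^2+\beta(\wt\eta_{\T_n}^*)^2$ are equivalent up to constants independent of $n$, $k$, and $h$.

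Iterating the contraction \eqref{eq:contraction property} of Theorem~\ref{thm:contraction property} backward from $n$ to any $j\in\{0,\ldots,n-1\}$ and invoking the above equivalence yields
\[
\Ho{u-\wt u_n}^2+\wt\osc_{\T_n}^2(\wt u_n)\ls C_{\rm conv}^{2(n-j)}\bigl(\Ho{u-\wt u_j}^2+\wt\osc_{\T_j}^2(\wt u_j)\bigr).
\]
Inverting this inequality and inserting the cardinality estimate of Lemma~\ref{lem:cardinality of Mn} gives
\[
\#\M_j\ls C_{\rm conv}^{2(n-j)}\,|(u,f,g,k)|_{\frac12}^{2}\bigl(\Ho{u-\wt u_n}^2+\wt\osc_{\T_n}^2(\wt u_n)\bigr)^{-1}.
\]
Summing over $j=0,\ldots,n-1$, invoking Lemma~\ref{lem:complexity of refine}, and bounding the geometric series $\sum_{j=0}^{n-1}C_{\rm conv}^{2(n-j)}$ uniformly in $n$ (since $C_{\rm conv}<1$) produces
\[
\#\T_n-\#\T_0\ls|(u,f,g,k)|_{\frac12}^{2}\bigl(\Ho{u-\wt u_n}^2+\wt\osc_{\T_n}^2(\wt u_n)\bigr)^{-1}.
\]

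Rearranging and substituting $|(u,f,g,k)|_{\frac12}\ls k\hMfg$ from Lemma~\ref{lem:ufg A1/2} yields $\bigl(\Ho{u-\wt u_n}^2+\wt\osc_{\T_n}^2(\wt u_n)\bigr)^{1/2}\ls k\hMfg\,(\#\T_n-\#\T_0)^{-1/2}$, and the same bound then holds for $\eta_{\T_n}$ by Theorem~\ref{thm:lowbound2}, \eqref{eq:wtosc osc distance}, and Corollary~\ref{cor:equivalent of eta}. To conclude, the upper bounds \eqref{thm:upbound1} and \eqref{thm:upboundL2} combine to give $\He{u-u_n}\ls\bigl(1+k^2h_n+k^{\frac12+\alpha}h_n^\alpha\bigr)\eta_{\T_n}\ls(1+k^2h_n)\eta_{\T_n}$, where the last step uses $k^{\frac12+\alpha}h_n^\alpha\le t(k,h_n)\ls 1$ from \eqref{tkh}. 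Chaining these inequalities delivers \eqref{eq:quasi-optimality}.

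The main conceptual difficulty---the failure of an estimator-reduction property for the nonlinear estimator $\eta_T$ evaluated at the FE solution $u_n$---has already been circumvented in Section~\ref{subs:equivalent relationship} and Theorem~\ref{thm:contraction property} through the auxiliary estimator $\wt\eta_T^*$ acting on $\wt u_n$ and the estimator equivalences of Lemma~\ref{lem:equivalent relationships}. Within the present argument the delicate point to monitor is the uniform-in-$n$ control of all hidden constants---in particular those appearing in $C_{\rm conv}$, $C_{\rm lub}$, $C_{\rm card}$, and in the absorption $t(k,h_n)\ls 1$---which is secured by the standing smallness conditions $\lnhminn\vep\hMfg^2\le\theta_0$ and $k^3h_0^{1+\alpha}\le C_0$ together with the monotonicity $h_n\le h_0$ along the adaptive refinement.
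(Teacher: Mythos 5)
Your proposal is correct and follows essentially the same route as the paper's proof: sum the cardinality bounds of Lemma~\ref{lem:cardinality of Mn} over the adaptive iterations, use the complexity Lemma~\ref{lem:complexity of refine} and the contraction \eqref{eq:contraction property} (transferred between total error and quasi-error via \eqref{lowbound elliptic projection}, the trivial bound $\wt\osc_\T\le\wt\eta_\T$, and $\wt\eta_\T\eqsim\wt\eta_\T^*$) to control $\#\T_n-\#\T_0$, then invoke Lemma~\ref{lem:ufg A1/2}, Corollary~\ref{cor:equivalent of eta}, and the upper bounds \eqref{thm:upbound1}--\eqref{thm:upboundL2} to convert back to $\He{u-u_n}$. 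The only cosmetic difference is the order of operations (you distribute the contraction to each $\#\M_j$ before summing, whereas the paper sums first and applies the contraction at the end) and your slightly roundabout citation of \eqref{thm:lowbound2} together with \eqref{eq:wtosc osc distance} where \eqref{lowbound elliptic projection} would suffice directly; neither affects correctness.
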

\begin{proof}
  Let $M=(1-\frac{\theta_{\rm D}^2}{\theta_*^2})^{-1}\vert(u,f,g,k)\vert_{\frac12}^{2}$. Using Lemmas \ref{lem:complexity of refine} and \ref{lem:cardinality of Mn}, the lower bound \eqref{lowbound elliptic projection}, the equivalence of $\wt\eta_\T$ and $\wt\eta_\T^*$ and the contraction property \eqref{eq:contraction property}, we derive
  \eqn{
    \#\T_n-\#\T_0&\ls\sum_{j=0}^{n-1}\#\M_j\ls M\sum_{j=0}^{n-1}\big(\Ho{u-\wt u_j}^2+\wt\osc^2_{\T_j}\big)^{-1}\\
    &\ls M\sum_{j=0}^{n-1}\big(\Ho{u-\wt u_j}^2+\beta\wt\eta_{\T_j}^2\big)^{-1}\eqsim M\sum_{j=0}^{n-1}\big(\Ho{u-\wt u_j}^2+\beta(\wt\eta_{\T_j}^*)^2\big)^{-1}\\
    &\ls M\sum_{j=1}^{n}C_{\rm conv}^{2j}\big(\Ho{u-\wt u_n}^2+\beta(\wt\eta^*_{\T_n})^2\big)^{-1}\ls \frac{MC_{\rm conv}^{2}}{1-C_{\rm conv}^{2}}\big(\Ho{u-\wt u_n}^2+\beta\wt\eta^2_{\T_n}\big)^{-1}.
  }
  Finally, from upper bounds \eqref{thm:upboundL2} and \eqref{thm:upbound1} and Corollary \ref{cor:equivalent of eta}, we have
  \eqn{
    \He{u-u_n}&\ls (1+k^2h_n)\frac{C_{\rm conv}}{(1-C_{\rm conv}^{2})^{\frac12}}M^{\frac12}(\#\T_n-\#\T_0)^{-\frac12}\\
    &\ls (1+k^2h_n)k\hMfg(\#\T_n-\#\T_0)^{-\frac12}.
  }
\end{proof}

\begin{remark}
  When the pollution error dominates the error, the mesh is quasi-uniformly refined numerically, so the second term in \eqref{eq:quasi-optimality} is nearly $k^3(\#\T_n-\#\T_0)^{-1}$ which is the quasi-optimal decaying rate of pollution error.
  Once the pollution error disappears, the first term dominates the upper bound in \eqref{eq:quasi-optimality} which seems as the interpolation error of the regular part and it is quasi-optimal.
\end{remark}

\section{Numerical results}\label{s:5}
\subsection{CIP Finite element method (CIPFEM)}
% The CIPFEM is an important method to reduce the pollution effect for the Helmholtz problem with high wave number (see, e.g., \cite{Babuska:Sauter:2000,Feng:Wu:2009,Gittelson:Hiptmair:Perugia:2009,Melenk:Sauter:2010}).
The CIPFEM, initially proposed in \cite{Douglas:Dupont:1976} for elliptic and parabolic problems in the 1970s, has recently demonstrated significant potential in addressing the Helmholtz problem with high wave number \cite{Wu:2013,Zhu:Wu:2013,Du:Wu:2015,Li:Wu:2019}. 
The CIPFEM utilizes the same approximation space as the FEM but adjusts the sesquilinear form of the FEM by adding a penalty term for the jump of the normal derivative of the discrete solution at interior edges.
% In this subsection, we introduce the CIPFEM for the NLH problem \eqref{eq:NLHva}.
Let the energy space $V$ and the sesquilinear form $a_\psi^\gamma (\cdot,\cdot)$ on $V\times V$ as
\begin{align*}
V &:= \HoG\cap \prod_{K\in\T} H^{\frac32+\tilde \vep}(K), \\
a_\psi^\gamma (u,v) &:= a_\psi(u,v)+J(u,v) \quad \forall u,v\in V, \\
J(u,v)&:=\sum_{e\in\E^I}\gamma_e h_e \ine{\jump{\na u}}{\jump{\na v}},
\end{align*}
where $\tilde\vep$ is some positive number and the penalty parameters $\gamma_e$ for $e\in\E^I$ are numbers with nonnegative imaginary parts, respectively.
It can be easily verified that, if $u\in H^{\frac32+\tilde\vep}(\Om)$ is the solution to the NLH problem \eqref{eq:Helm}, then $J(u,v)=0\quad\forall v\in V.$
The CIPFEM for \eqref{eq:NLHva}
reads as find $u_h\in V_\T$ such that
\begin{equation}\label{eq:cip-fem}
a^\gamma_{u_h}(u_h,v_h)= (f,v_h)+\langle g,v\rangle_{\GaI}\quad \forall v_h\in V_\T.
\end{equation}
% The frozen-nonlinearity iteration for solving the CIPFEM \eqref{eq:cip-fem} reads for a given $u_h^0\in V_\T$, find ${u_h^{l+1}} \in V_\T,~l=0,1,2,\cdots,$ such that
% \begin{equation}\label{eq:dip0}
%     a_{u_h^{l}}^\gamma(u_h^{l+1},v_h)=(f,v_h)+\langle g,v_h\rangle\quad \forall v_h\in V_\T.
% \end{equation}
Newton's iteration for solving the CIPFEM \eqref{eq:cip-fem} reads for a given $u_h^0\in V_\T$, find ${u_h^{l+1}} \in V_\T,~l=0,1,2,\cdots,$ such that
\begin{equation}\label{eq:dip0}
    a_{\sqrt 2u_h^l}^\gamma(u_h^{l+1},v_h)-k^2\vep\oneo\big((u_h^l)^2\overline{u_h^{l+1}},v_h\big)=(f-2k^2\vep\oneo\big\vert u_h^l\big\vert^2 u_h^l,v_h)+\langle g,v_h\rangle_{\GaI}\quad \forall v_h\in V_\T.
\end{equation}
The other two iterations can be similarly constructed.

Remark that when $\gamma_e\equiv 0$, the CIPFEM reduces to the standard FEM. 
Combining the analyses in \cite{Zhou:Zhu:2015} and  \S\,\ref{s:FEM error estimate}, we can similarly derive the error estimates \eqref{eq:error estimate} for the CIPFEM.

In the following numerical examples, we use Newton's iteration for its quadratic convergence. Set $\theta_{\rm D}=0.4$.  We choose $\gamma_e\equiv\gamma=-\sqrt{3}/24+0.005\mbi$ where the real part can reduce pollution effect by a dispersion analysis for 2D problems on equilateral triangulations \cite{Han:2012} and
the imaginary part makes use of the stability for ACIPFEM. Let $N$ represent the number of elements.

\subsection{Accuracy}\label{s:5.2}
We consider the NLH problem \eqref{eq:Helm}--\eqref{eq:Dir} with the domain as shown in the left part of figure \ref{fig:ex1pet}.
%$\Om$ is the rectangle $[-3R,4R]\times[-4R,4R] (R=1/4)$ digging out an isosceles triangle which the obtuse angle is $\frac{2\pi}3$ and the vertice is at $(2R,0)$.
As seen in this figure, $\Om$ is a polygon formed by seven points $(4R,4R)$, $(-3R,4R)$, $(-3R,-4R)$, $(4R,-4R)$, $(\frac72R,-\frac32R\tan \frac{\pi}{40}),$ $(2R,0)$, $(\frac72R,\frac32R\tan \frac{\pi}{40})$.
$\Omz$ is the square $[-R, R]\times [-R, R]$ inside, $\GaD$ is the four right hand edges and the other three edges constitute $\GaI$.
So $\alpha=\frac{20}{39}$ and we take the exact solution at the corner $u_1=\chi_1(r)J_{\alpha}(kr)\sin(\alpha(\theta-\frac{\pi}{40}))$, where the cut-off function  $\chi_1(r)=(1-\frac{r}{R})^2,\ r\leq R;\ \chi_1(r)=0, \ r>R$.
% \eqn{
%   \chi_1(r)=\begin{cases}
%     (1-\frac{r}{R})^2,& \quad r\leq R,\\
%     0,& \quad r>R.
%   \end{cases}
%   % \qquad f_1=-\Delta u_1-k^2u_1.
% }
For the nonlinearity part, we take the Kerr constant to be $\vep=k^{-2}$, and the exact solution $u=u_1+u_2$ 
as the form of a fundamental soliton (cf. \cite{Chamorro-Posada:McDonald:New:2002}) multiplied by a cut-off function $\chi_2(x,y)$:
\eqn{u_2=\chi_2\frac{q\sqrt{2}e^{\mbi y\sqrt{k^2+q^2}}}{\sqrt{\vep}k\cosh(qx)},\quad
 \chi_2(x,y)= \begin{cases}
  (x^2-R^2)^2, & \quad -R\leq x\leq R,\\
  0, & \quad \text{otherwise}.
\end{cases}
}
%So the global exact solution $u=u_1+u_2$. 

First, we let $k=80, q=20$ and the element size of the initial mesh $h_{\T_0}=0.04$. The top-left part of figure \ref{fig:ex1pet} shows the global mesh after 20 adaptive iterations using CIPFEM, while the top-right one is the real part of the real solution on this mesh.
We can observe that the solution oscillates rapidly around the line $x=0$, exhibiting high energy in this region. On the other hand, the reentrant corner should be refined due to the presence of singularities around $(\frac12,0)$.
Consequently, the mesh is denser at the reentrant corner and in the high-energy regions of the solution compared to the low-energy regions in theory. These observations are depicted in the bottom of figure~\ref{fig:ex1pet}.
\begin{figure}[tbp]
  \begin{minipage}[c]{0.47\columnwidth}
  \centering
  \includegraphics[width=0.66\columnwidth]{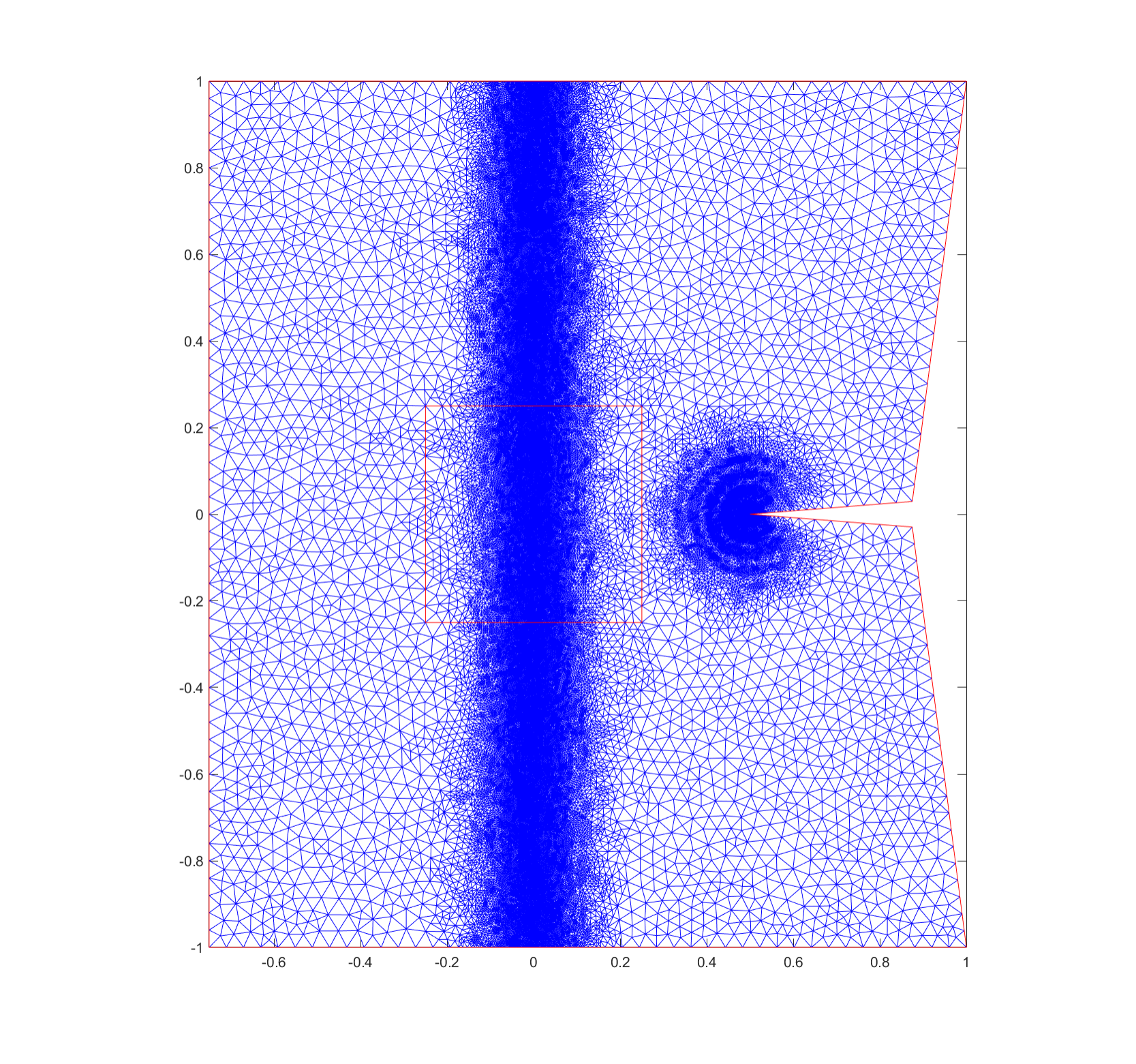}
  %\caption*{\itshape{$\abs{u}$.}}
  \end{minipage}
  \begin{minipage}[c]{0.47\columnwidth}
  \centering
  \includegraphics[width=0.66\columnwidth]{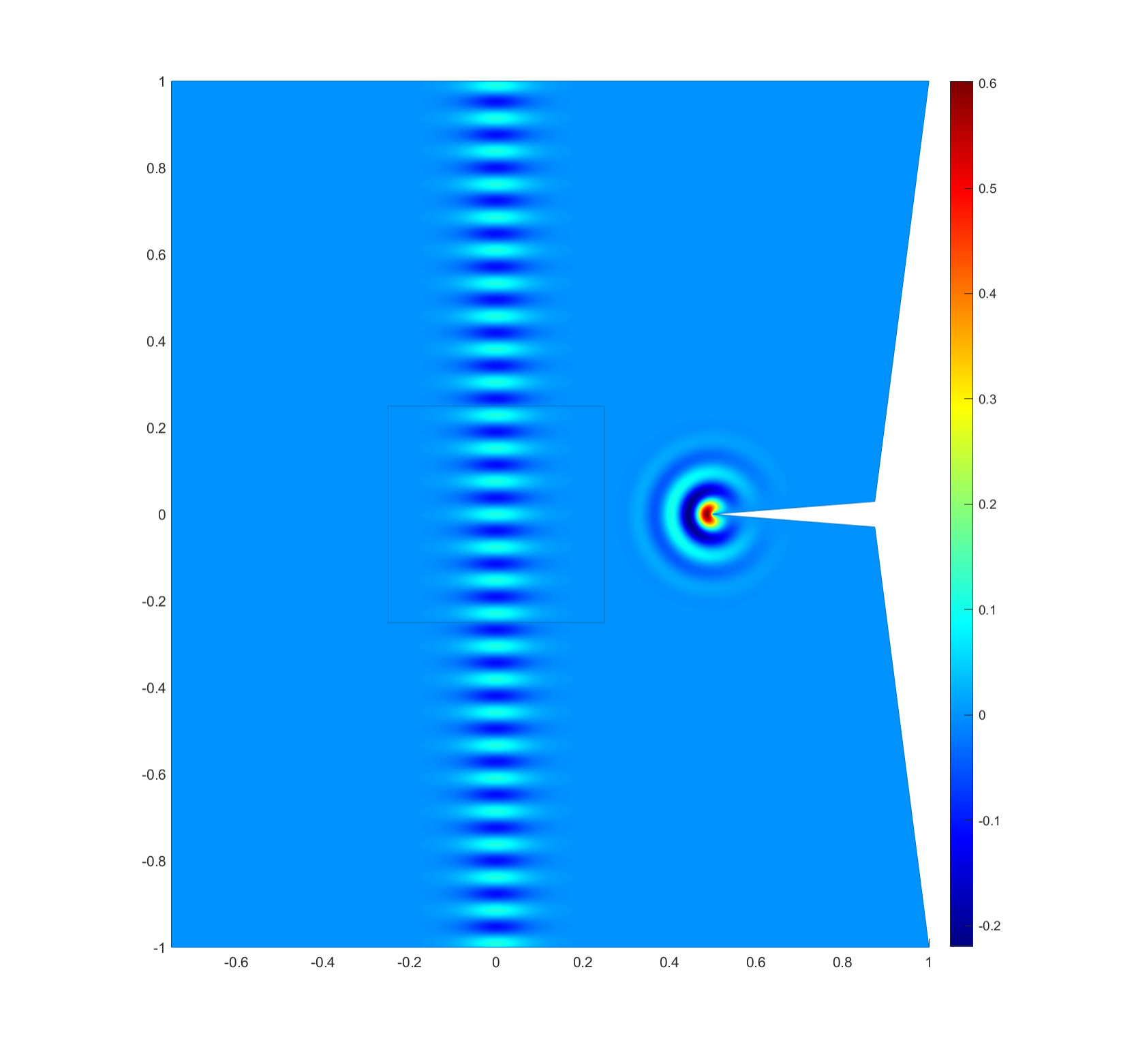}
 % \caption*{\itshape{The adaptive mesh after 18 adaptive iterations.}}
  \end{minipage}
  %\caption{\itshape{Left: $\Re(u)$; Right: The local mesh after 20 adaptive iterations.}}
  \\
  \begin{minipage}[c]{0.47\columnwidth}
    \centering
    \includegraphics[width=0.66\columnwidth]{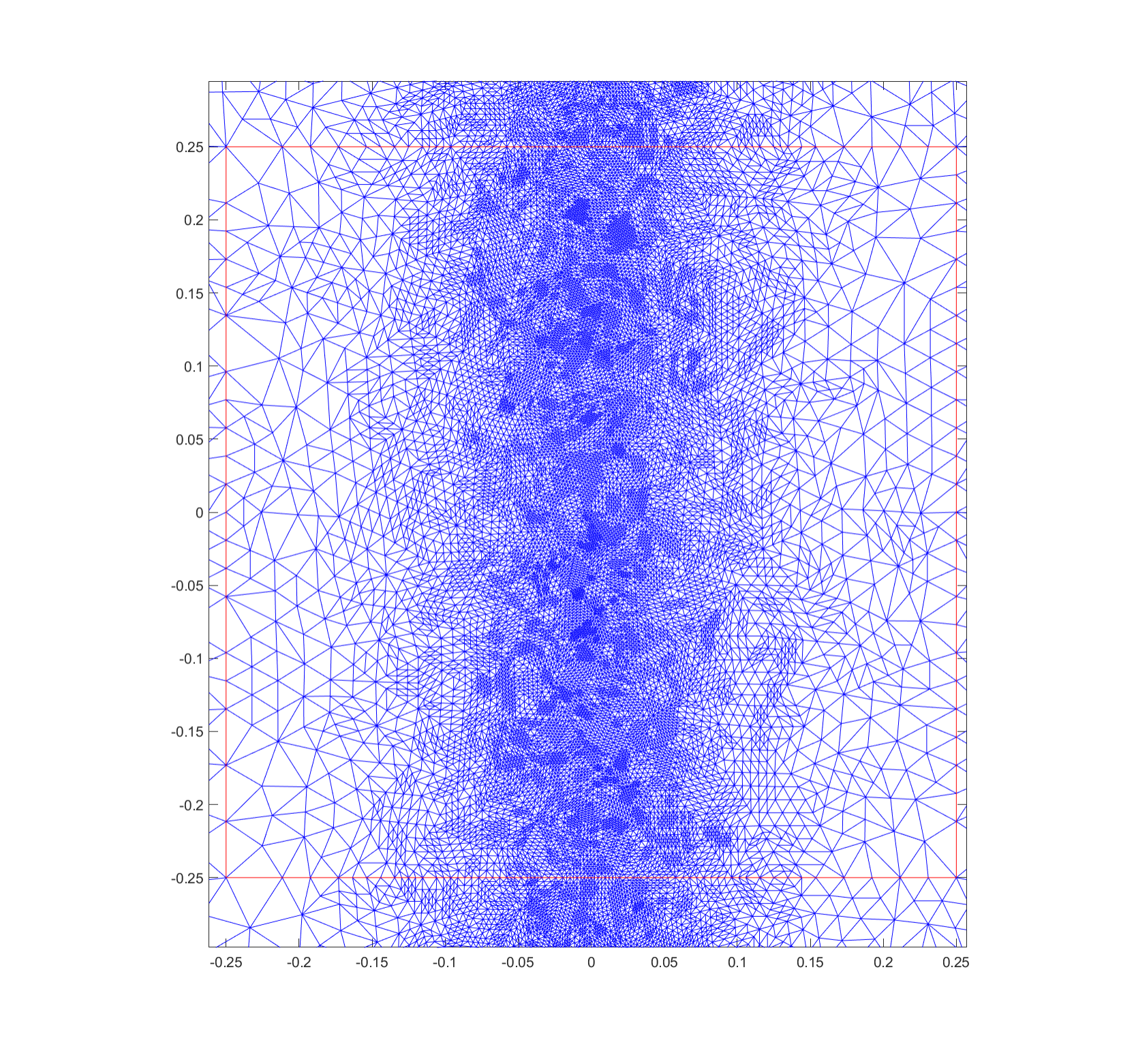}
    %\caption*{\itshape{$\abs{u}$.}}
    \end{minipage}
    \begin{minipage}[c]{0.47\columnwidth}
    \centering
    \includegraphics[width=0.66\columnwidth]{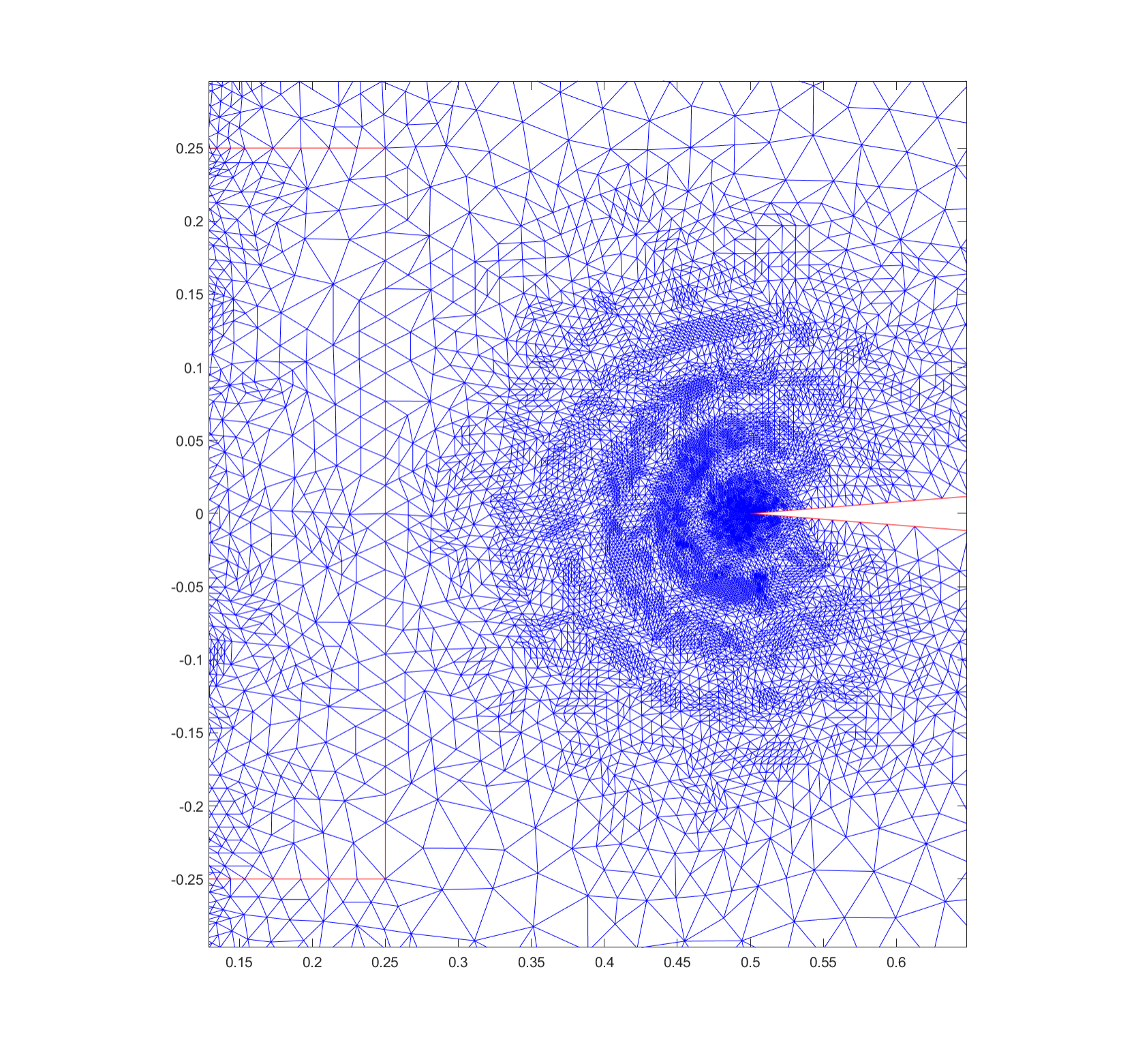}
   % \caption*{\itshape{The adaptive mesh after 18 adaptive iterations.}}
    \end{minipage}
    \caption{\itshape{Top: The mesh after 20 adaptive iterations and $\Re(u)$ respectively; Bottom: The local meshes around $(\frac12,0)$ and $(0,0)$ respectively.}}
  \label{fig:ex1pet}
\end{figure}

From \eqref{eq:error estimate}, we can find if $h_\T\gs k^{-2}$, the pollution error dominates the error estimate. The interpolation error $O(kh_\T)$ associated with the regular component becomes dominant in the error estimate when $k^{-\frac{3-2\alpha}{2-2\alpha}}\ls h_\T\ls k^{-2}$, while the interpolation error $O(k^{-\frac12}(kh_\T)^{\alpha})$ corresponding to the singularities is dominant when $h_\T\ls k^{-\frac{3-2\alpha}{2-2\alpha}}$.
As illustrated in the left part of figure~\ref{fig:ex2},  the error of the adaptive FE solution initially decreases rapidly at a rate $O(N^{-1})$ due to the pollution effect and then decay at the rate $O(N^{-\frac12})$ when the pollution error disappears.
$\eta_\T$ fits the error of $P_hu$ well and both of them decay at the optimal rate of $O(N^{-\frac12})$. Notably, the error estimator obviously underestimates the true error of the FE solution in the preasymptotic regime. The error of FEM not using adaptive algorithm exceeds that of the adaptive method, that is to say, if we need the same accuracy, the number of elements for adaptive method is much less than not using adaptive method in this NLH problem. For example, when the error decays to 0.126, the number of elements for FEM is about $1.49\times 10^6$ whereas for AFEM, it is about $2\times 10^5$.
Additionally, the error from uniform refinements eventually comes close to the rate $O(N^{-\frac{10}{39}})$. It means at this stage the error of the singular part dominates the error estimate.
Comparing the two subfigures in figure~\ref{fig:ex2}, we observe that the error of ACIPFEM is much smaller than the error of AFEM in the preasymptotic regime, with a similar trend for not using adaptive methods. This demonstrates that ACIPFEM can reduce the pollution error efficiently and the error decays at the optimal rate $O(N^{-\frac12})$ quickly.
On the other hand, the error of ACIPFE solution is close to $\eta_\T$, so the error estimator does not underestimate the error of the CIPFE solution in the preasymptotic regime.

\begin{figure}[tbp]
  \centering
  \includegraphics[scale=0.405]{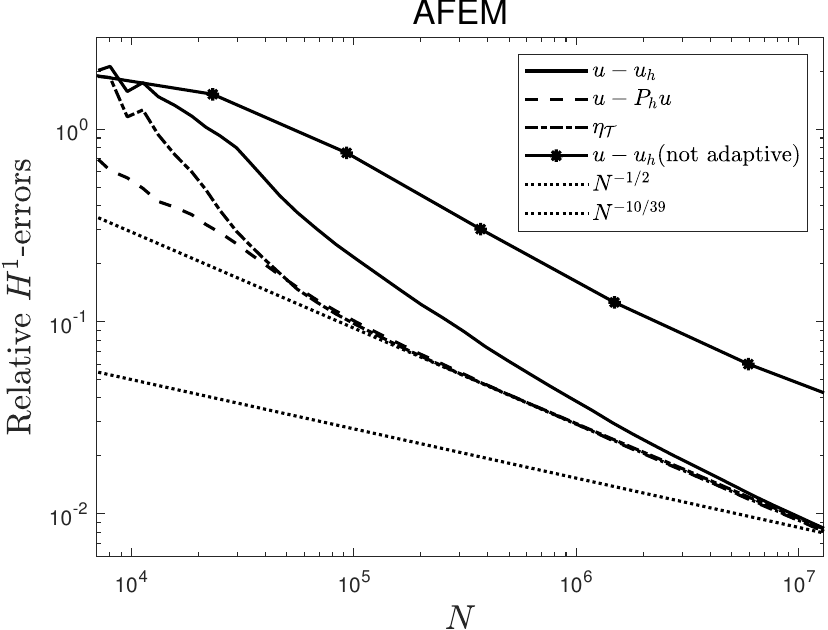}
  \includegraphics[scale=0.405]{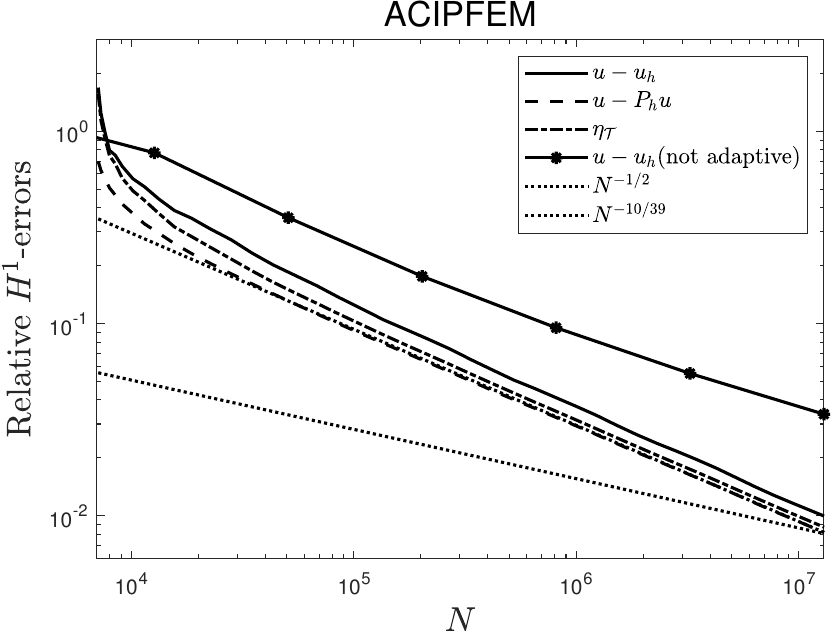}
  \caption{\itshape{The relative $H^1$-error versus the number of elements for $k=80, q=20, h_{\T_0}=0.04.$}}
  \label{fig:ex2}
  \end{figure}

To show the rate of the interpolation error of the singularities more clearly, we set small wave numbers $k=5, q=1, h_{\T_0}=0.1$ for another numerical test. See figure \ref{fig:ex1anota}, the error of AFEM decays at the optimal rate $O(N^{-\frac12})$.
The rate of the error curve of FEM rapidly varies from $O(N^{-\frac12})$ to $O(N^{-\frac{10}{39}})$
along with $h_\T$ becoming small.
This example verifies that the adaptive algorithm can make FEM converging at full order for $h_\T\ls k^{-\frac{3-2\alpha}{2-2\alpha}}$.
\begin{figure}[tbp]
  \centering
  \includegraphics[scale=0.405]{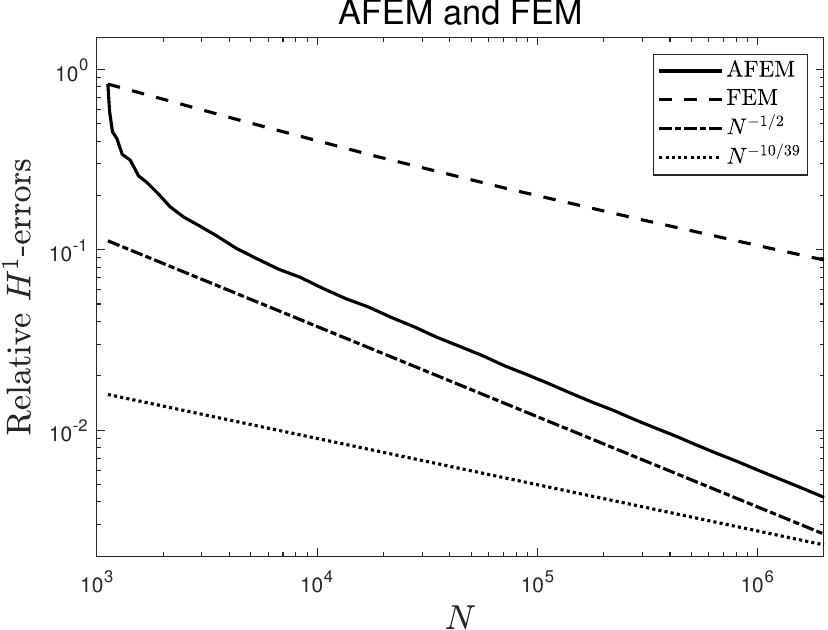}
  \caption{\itshape{The relative $H^1$-errors of AFEM and FEM versus the number of elements for $k=5, q=1$, respectively.}}
  \label{fig:ex1anota}
\end{figure}

\subsection{Optical bistability}
%Optical bistability is a famous third-order nonlinear optical phenomenon. We use the numerical example in \cite[section 5.3]{Wu:Zou:2018} i.e. $k=k_0:=13.8 \ \text{in} \ \Om\backslash \Omz, k=2.5k_0 \ \text{in} \ \Omz, \vep=10^{-12}, f=0$ and $\T_0=\mathcal{G}_{100}$.
%The incident wave is a plane wave $\uinc=Ae^{\i k_0x}, A=180000$.
Optical bistability (see, e.g., \cite{Boyd:2008}) is a famous third-order nonlinear optical phenomenon. It refers to the situation in which two different output intensities are possible for a given input intensity.
Interest in optical bistability stems from its potential usefulness as a switch for use in optical communication and in optical computing.

We consider the NLH problem with the pure impedance boundary condition on a domain composed of two regular hexagons with their common center being the origin
and radiuses being $1$ and $1/2$, respectively. For an even number $m>0$, let $\mathcal{G}_m$ be the equilateral triangulation of mesh size $h=1/m$.
Let $k=k_0=9.6$ in $\Omz^c$ and $k=2.5k_0$ in $\Omz$ (cf. \cite{Wu:Zou:2018,Yuan:Lu:2017}). We set $\T_0=\mG_{100}, \vep=10^{-12}$ and $f=0$. The incident wave is specified as a collimated incoming beam $\uinc=Ie^{-k_0^2x^2/2}$
% $\uinc=Ie^{-\frac{k_0^2}2x^2}$ 
for which the refracted beam is approximately a Gaussian beam (see, e.g. \cite{Baruch:Fibich:Tsynkov:2009}).
The Gaussian mode describes the intended output of most lasers in math, as such a beam can be focused into the most concentrated spot. So using Gaussian beam as incident wave can simulate the optical bistability more accurately than using plane wave in our former work (\cite{Wu:Zou:2018}).
We let $I$ be in the range $[0,450000]$ to simulate the high-energy incident wave and denote a reference incident wave $\uinc^0=I_0e^{-k_0^2x^2/2}$ with $I_0=10^5$ for scaling.

% \begin{figure}[tbp]
%   \centering
%   \includegraphics[width=6cm]{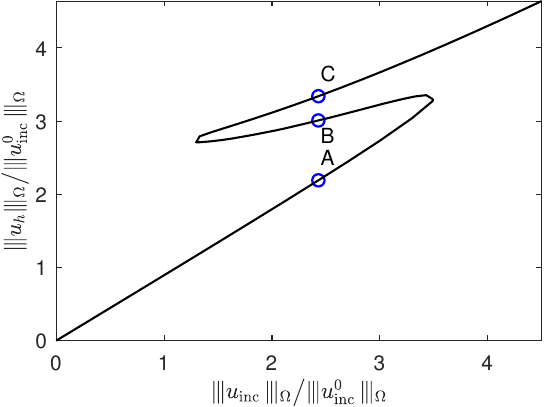}
%   \caption{\itshape{X-axis: incident wave; Y-axis: scattered field. $I$ is $0:450000$.}}
%   \label{fig:ex3curve}
% \end{figure}

\begin{figure}[tbp]
    \centering
    \includegraphics[scale=0.7]{ex2.eps}
    \includegraphics[scale=0.7]{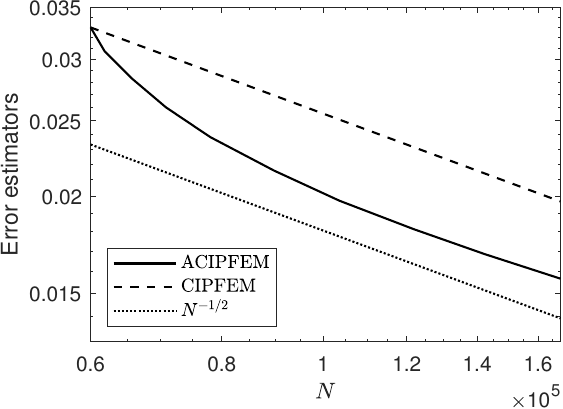}
  \caption{\itshape{Left: the scaled energy of the scattered field; Right: error estimators $\eta_\T$ at the point $C$.}}
  \label{fig:bistability}
\end{figure}

\begin{figure}[t]
  \centering
  \includegraphics[width=13cm]{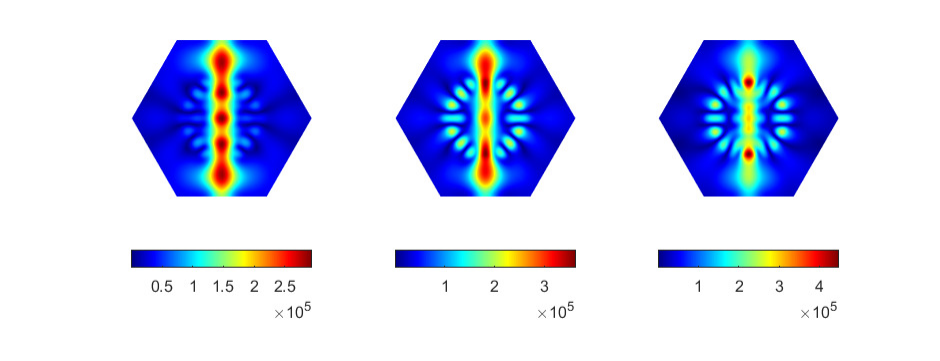}
  \vskip -20pt
  \caption{\itshape{Scattered field patterns (magnitude) of the three solutions marked as the points {\rm A}, {\rm B}, and {\rm C}% in Figure \ref{fig:ex3curve}
  .}}
  \label{fig:ex3ABC}
\end{figure}

The left part of figure \ref{fig:bistability} shows the energy norm of the scattered field computed by the ACIPFEM versus that of the incident wave. As analysed in section \ref{s:3} and shown in the section \ref{s:5.2}, we can use the error estimators to estimate the error of CIPFE solution.
Therefore we let the stopping criterion be $\eta_\T<0.5\eta_{\T_0}$.
% It is about $9$ iterations for stopping the adaptive methods. 
Clearly, the curve exhibits three branches. In the lower branch, the intensity of the scattered field increases by enlarging the incident wave's intensity $I$ and jumps to the upper branch at $I=349000$.
At the upper branch, we can compute new points by using the adjacent points on $\T_0$ as the initial guess. 
% for Newton's method fastly. 
As shown, when decreasing $I$ to 130000, the upper branch falls down to the lower branch.
% For $I=243000$, we choose the 0.5 times the solution at point A plus 0.5 times the solution at point C as the initial guess to compute the solution at point B.
For $I=243000$, we select half of the sum of the solutions at point A and point C on $\T_0$ as the initial guess to compute the solution at point B. 
Then we can extend the middle branch using the solution at point B as the initial guess.
This indicates that the NLH problem has three different solutions for $130000<I<349000$. One is unstable in the middle branch and two are most probably stable in the upper and lower branches.
We plot the electric field patterns of all three solutions at points A, B, and C in figure \ref{fig:ex3ABC}.
As shown, optical bistability for the scattering of a Gaussian incident wave has been efficaciously replicated.

It is well known that the large nonlinearity brings the singularity and needs huge amount of computation. Taking point C as an example, the ACIPFEM needs 9 iterations in order to satisfy the stopping criterion.
The left part of figure \ref{fig:ex3pet} illustrates the elementwise $H^1$ semi-norm of $u_h$ varies rapidly in the nonlinear medium $\Omz$. The middle and right parts of this figure depict the global mesh on $\Om$ and the local mesh around the nonlinear medium $\Omz$, respectively. 
We can find the mesh is dense in $\Omz$ and at the interface $\pa\Omz$ although the solution in this problem has $H^2$ regularity in the whole domain $\Om$. For the case of high-order FEM, the singularity at the interface $\pa\Omz$ becomes more heavily and we will study this case in our future work.
The right part of figure \ref{fig:bistability} plots the error estimators of CIPFEM and ACIPFEM, showing that both of them decay at the optimal rate $O(N^{-\frac12})$ and the ACIPFEM's decays very fast initially and is much smaller than CIPFEM's, that is to say, adaptive method is still efficient in solving high-energy NLH problem without corner singularities.

% \begin{figure}[tbp]
% \begin{minipage}[c]{0.3\textwidth}
% \centering
% \includegraphics[height=5cm]{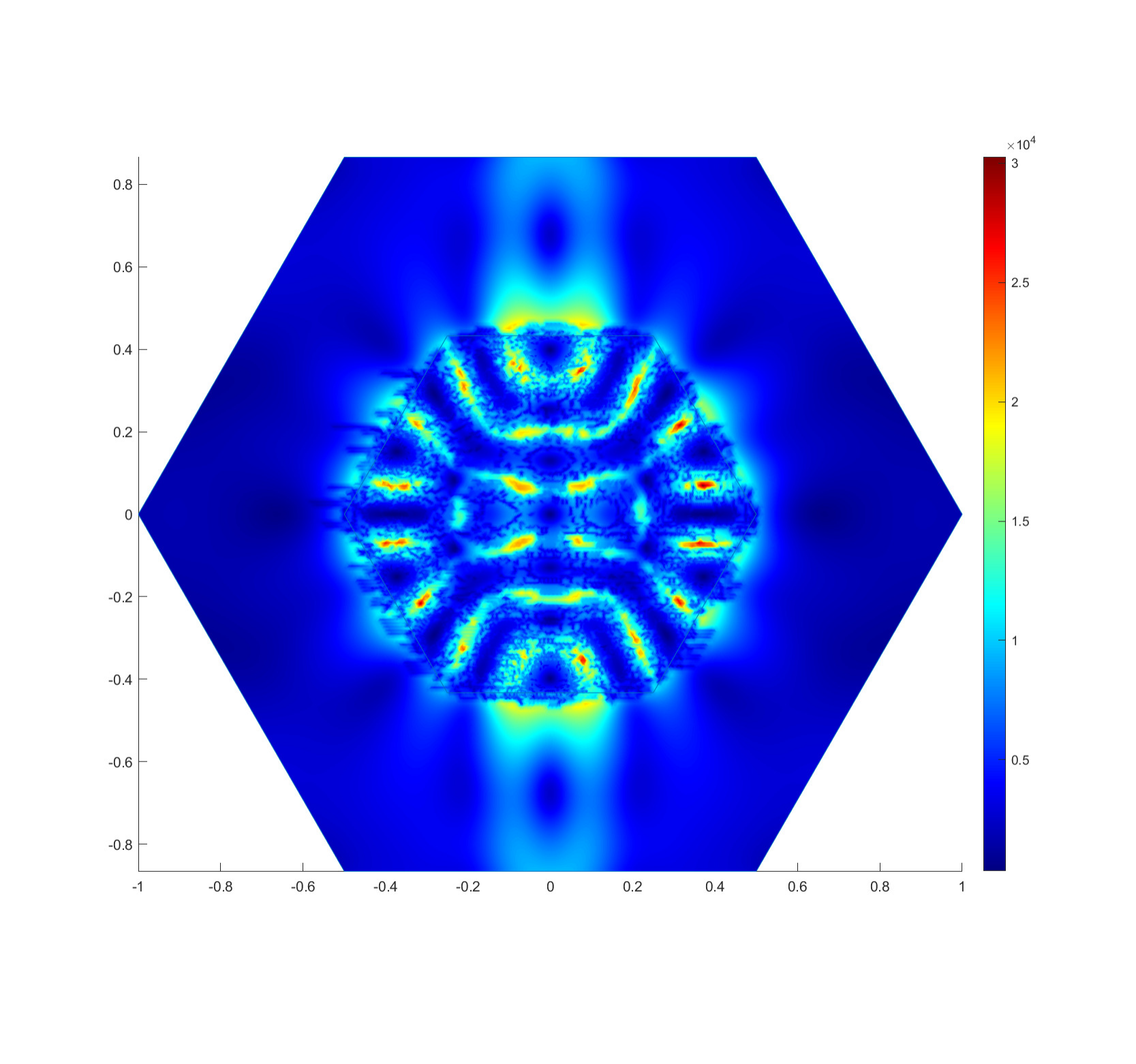}
% \end{minipage}\hspace{-6mm}
% \begin{minipage}[c]{0.4\textwidth}
% \centering
% \includegraphics[height=5cm]{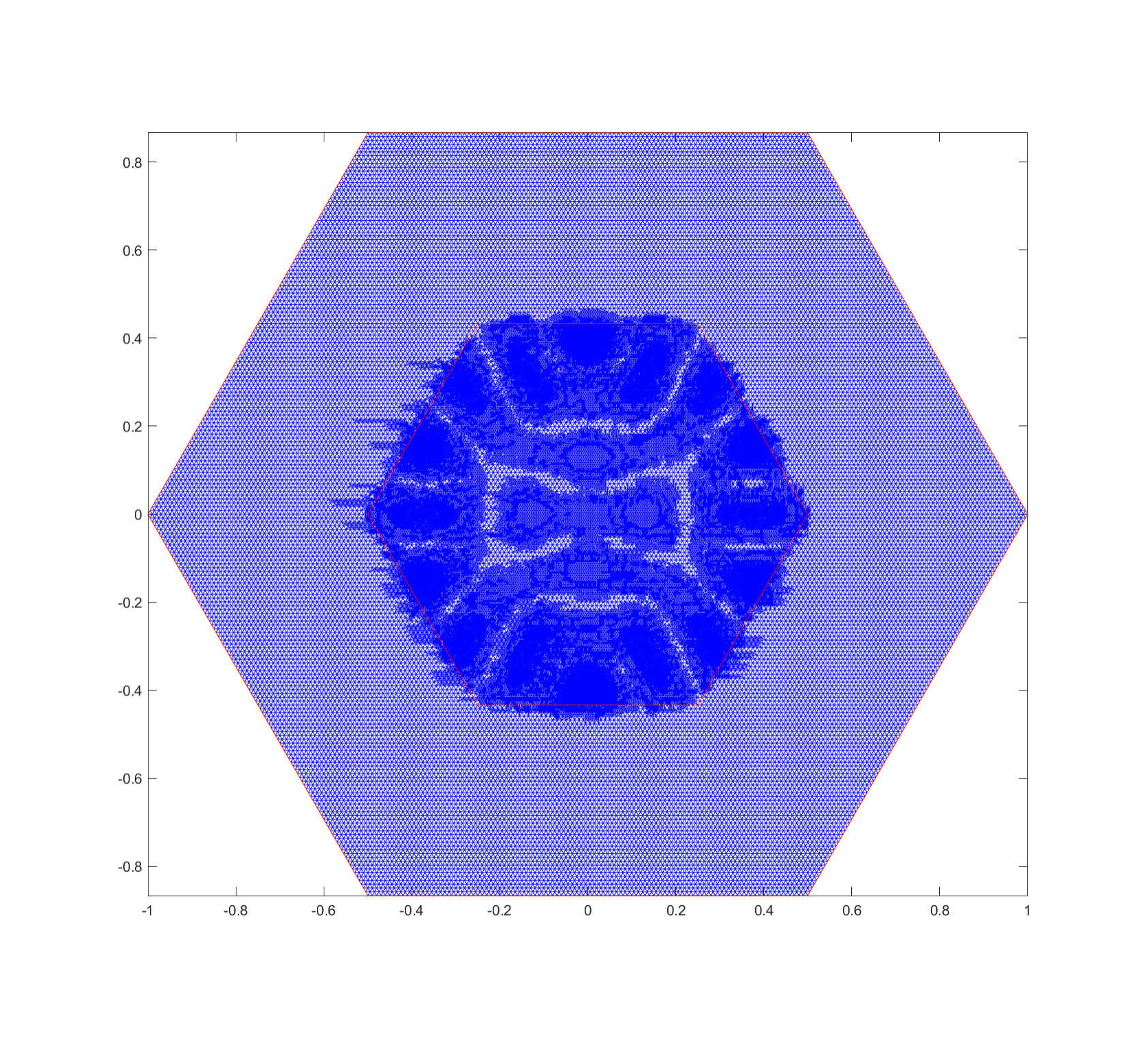}
% \end{minipage}\hspace{-3mm}
% \begin{minipage}[c]{0.3\textwidth}
%   \includegraphics[height=5cm]{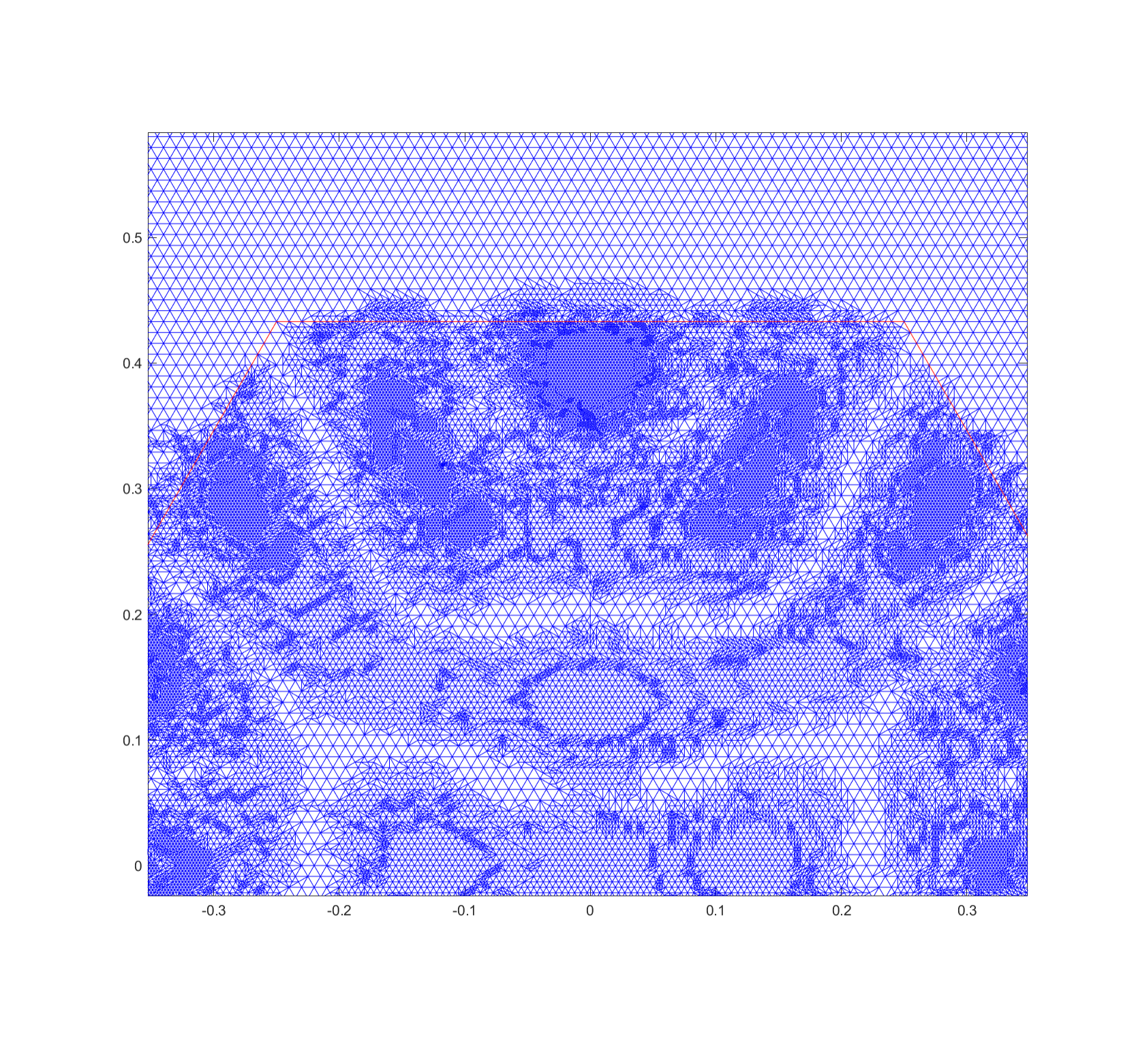}
% \end{minipage}
\begin{figure}[tbp]


  \centering
  \includegraphics[scale=0.16]{ex2C.eps}
  \includegraphics[scale=0.16]{ex2petglobal.eps}
  \includegraphics[scale=0.16]{ex2petlocal.eps}
 \caption{\itshape{$\abs{u_h}_{1,T}$, the global and local adaptive mesh at the point {\rm C}, respectively.}}
\label{fig:ex3pet}
\end{figure}

% \begin{figure}[tbp]
%   \begin{minipage}[c]{0.45\columnwidth}
%   \centering
%   \includegraphics[width=0.7\columnwidth]{ex2C.eps}
%   \end{minipage}%
%   \begin{minipage}[c]{0.45\columnwidth}
%   \centering
%   \includegraphics[width=0.7\columnwidth]{ex2petglobal.eps}
%   \end{minipage}
% \caption{\itshape{$\abs{u_h}_{1,T}$, the global and local adaptive mesh at the point {\rm C}, respectively.}}
% \label{fig:ex3pet}
% \end{figure}

% \begin{figure}
%   \centering
%   \includegraphics[width=7cm]{ex2Cest.eps}
%   \caption{\itshape{Relative error estimators $\eta_\T$ versus the number of elements at point {\rm C}.}}
%   \label{fig:ex3Cest}
% \end{figure}

  %%%%%%%%% Documents End %%%%%%%%%%%%%%%%%%%%%%%%%%

\appendix
\renewcommand{\theequation}{A.\arabic{equation}}
\renewcommand{\thetheorem}{A.\arabic{theorem}}
\renewcommand{\thefigure}{A.\arabic{figure}}
\setcounter{equation}{0}
\setcounter{theorem}{0}
\setcounter{figure}{0}
\section*{Appendix A}
In this appendix, we give a proof of the first estimate in Lemma \ref{lem:interior-estimate}.
\begin{lemma}
  If $k\vep\Linf{\psi}^2\leq \theta_0, k^3h_\T^{1+\alpha}\leq C_0$ for a positive constant $C_0$, then the solution of \eqref{eq:auxiliary dp1} satisfies
  \begin{equation}\label{eq:w_h stability}
    \He{w_h}\ls \MFg.
  \end{equation}
  \begin{proof}
    Firstly, we take $v_h=w_h$ in \eqref{eq:auxiliary dp1} to get by computing the imaginary and real parts
      \begin{align}
        k\LtD{w_h}{\GaI}^2-k^2\vep\Im(\psi^2\overline{w_h},w_h)&\leq 2\abs{(F,w_h)}+k^{-1}\LtD{g}{\GaI}^2,\label{eq:appendix eq1}\\
        \Lt{\nabla w_h}^2-k^2\Lt{w_h}^2-2k^2\vep\LtD{\psi w_h}{\Omz}^2-k^2\vep\Re(\psi^2\overline{w_h},w_h)&\leq 2\abs{(F,w_h)}+k^{-1}\LtD{g}{\GaI}^2.\label{eq:appendix eq2}
      \end{align}
  Then let $z$ be the solution of this problem:
    \begin{align}
      -\Delta z&-k^2(1+2\vep\oneo\abs{\psi}^2)z-k^2\vep\oneo\overline{\psi}^2\overline z=w_h\quad \text{in}\ \Om,\label{eq:appendix duality problem}\\
      &\frac{\pa z}{\pa n}-\mbi kz=0 \quad \text{on}\ \GaI, \quad
      z=0\quad \text{on}\ \GaD.\notag
    \end{align}
  We find that $\bar z$ is the solution to \eqref{eq:auxiliary cp} with $F=\overline{w_h}, g=0$.
  So we deduce from Lemma \ref{lem:auxiliary stability}, \eqref{eq:elliptic projection estimate}, and \eqref{eq:singular interpolation error} that
  % \eq{
  %   \begin{aligned}
  %     \He{z}&\ls \Lt{w_h} \qaq z=z_R+\sum_{j=1}^{N}C_k^jS_j \ \text{with the estimates}\\
  %     \Ht{z_R}&\ls k\Lt{w_h} \qaq \vert C^j_k\vert\ls k^{\alpha_j-\frac12}\Lt{w_h}.
  %   \end{aligned}
  % }
  \eqn{
    \Lt{z-P_hz}+h_\T^{\frac\alpha 2}\LtD{z-P_hz}{\GaI}\ls h_\T^{\alpha}(kh_\T+k^{-\frac12}(kh_\T)^{\alpha})\Lt{w_h} \qaq \Lt{z}\ls k^{-1}\Lt{w_h}.
  }
  Now we multiply \eqref{eq:appendix duality problem} by $\overline{w_h}$ and then integrate by parts
  \eqn{
    \begin{aligned}
      \Lt{w_h}^2&=a_0(\nabla w_h,\nabla z)-2k^2\vep(\abs{\psi}^2w_h,z)_{\Omz}-k^2\vep(\psi^2w_h,\overline z)_{\Omz}\\
      &=b(w_h,P_hz)-(k^2(1+2\vep\oneo\abs{\psi}^2)w_h+w_h,z)-k^2\vep(\psi^2w_h,\overline z)_{\Omz}+\mbi k\langle w_h,z\rangle_{\GaI}\\
      &=(F,P_hz)+\langle g,P_hz\rangle_{\GaI}+\mbi k\langle w_h,z-P_hz\rangle_{\GaI}-k^2\vep(\psi^2w_h,\overline z)_{\Omz}+k^2\vep(\psi^2\overline{w_h},P_hz)_{\Omz}\\
      &\quad -k^2((1+2\vep\oneo\abs{\psi}^2)w_h+w_h,z-P_hz)\\
      &=(w_\psi,w_h)+(F,P_hz-z)+\langle g,P_hz-z\rangle_{\GaI}+\mbi k\langle w_h,z-P_hz\rangle_{\GaI}+k^2\vep\big((\psi^2w_\psi,\overline z)_{\Omz}-(\psi^2\overline{w_\psi},z)_{\Omz}\big)\\
      &\quad -k^2((1+2\vep\oneo\abs{\psi}^2)w_h+w_h+\vep\oneo\psi^2\overline{w_h},z-P_hz)+k^2\vep\big(-(\psi^2w_h,\overline z)_{\Omz}+(\psi^2\overline{w_h},z)_{\Omz}\big)\\
      &\ls k^{-1}\MFg(1+kh_\T^{\frac\alpha 2}(kh_\T+k^{-\frac12}(kh_\T)^{\alpha})+k\theta_0k^{-1})\Lt{w_h}\\
      &\quad +(kh_\T^{\frac\alpha 2}+k^2h_\T^{\alpha})(kh_\T+k^{-\frac12}(kh_\T)^{\alpha})\Lt{w_h}^2+k\theta_0k^{-1}\Lt{w_h}^2,
    \end{aligned}
  }
  where we use $(F,z)+\langle g,z\rangle_{\GaI}=(w_\psi,w_h)+k^2\vep(\psi^2w_\psi,\overline z)_{\Omz}-k^2\vep(\psi^2\overline{w_\psi},z)_{\Omz}$ and \\
  $k\LtD{w_h}{\GaI}\ls k\Lt{w_h}+k\sqrt{\theta_0}\Lt{w_h}+\MFg\ls k\Lt{w_h}+\MFg$ from \eqref{eq:appendix eq1}.
  Then we conclude
  \eqn{
    \Lt{w_h}\ls k^{-1}\MFg.
  }
 Combining the above estimate with \eqref{eq:appendix eq2}, we deduce
  \eqn{
    \Lt{\nabla w_h}^2&\ls (k^2+k\theta_0)\Lt{w_h}^2+k^{-2}\Lt{F}^2+k^{-1}\LtD{g}{\Gamma}^2\ls \MFg^2.
  }
  Finally, \eqref{eq:w_h stability} comes from  $\He{w_h}\eqsim\Lt{\nabla w_h}+k\Lt{w_h}$.
 \end{proof}
\end{lemma}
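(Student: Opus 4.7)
The plan is to establish the energy-norm stability \(\He{w_h}\ls \MFg\) by an Aubin--Nitsche-type duality argument, exactly analogous to the continuous estimate in Lemma~\ref{lem:auxiliary stability}, but with enough care that the discrete approximation errors can be absorbed under the smallness condition \(k^3 h_\T^{1+\alpha}\le C_0\).

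\textbf{Step 1 (testing with $v_h=w_h$).} I would first test \eqref{eq:auxiliary dp1} with \(v_h=w_h\) and split into real and imaginary parts. The imaginary part yields
\(k\LtD{w_h}{\GaI}^2 \le 2|(F,w_h)| + k^{-1}\LtD{g}{\GaI}^2 + k^2\vep|\Im(\psi^2\overline{w_h},w_h)_{\Omz}|\),
and using \(k\vep\Linf{\psi}^2\le\theta_0\) the last term is bounded by \(k\theta_0\Lt{w_h}^2\). The real part gives
\(\Lt{\nabla w_h}^2 \le k^2\Lt{w_h}^2 + Ck\theta_0\Lt{w_h}^2 + 2|(F,w_h)| + k^{-1}\LtD{g}{\GaI}^2\).
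So everything reduces to showing \(k\Lt{w_h}\ls\MFg\); the trace and gradient estimates then follow immediately.

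\textbf{Step 2 (dual problem and regularity).} To control \(\Lt{w_h}\), I would introduce the adjoint problem: find \(z\in\HoG\) with
\(-\Delta z - k^2(1+2\vep\oneo\abs{\psi}^2)z - k^2\vep\oneo\overline{\psi}^2\overline z = w_h\) in \(\Om\),
\(\pa_n z - \mbi k z = 0\) on \(\GaI\), \(z=0\) on \(\GaD\).
Taking complex conjugates shows \(\overline{z}\) solves the continuous auxiliary problem \eqref{eq:auxiliary cp} with source \(\overline{w_h}\) and zero Neumann data. Hence Lemma~\ref{lem:auxiliary stability} supplies \(\He{z}\ls\Lt{w_h}\), in particular \(\Lt{z}\ls k^{-1}\Lt{w_h}\), and a singularity decomposition \(z=z_R+\sum_j c_j S_j\) with the associated regular/singular bounds. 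Combining with \eqref{eq:elliptic projection estimate} and \eqref{eq:singular interpolation error} yields
\(\Lt{z-P_hz}+h_\T^{\alpha/2}\LtD{z-P_hz}{\GaI}\ls h_\T^{\alpha}\big(kh_\T + k^{-1/2}(kh_\T)^{\alpha}\big)\Lt{w_h}\).

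\textbf{Step 3 (representation of $\Lt{w_h}^2$).} I would multiply the dual equation by \(\overline{w_h}\), integrate by parts, and then rewrite using \(b(w_h,P_hz)=a_0(w_h,P_hz)+(k^2+1)(w_h,P_hz)-\mbi k\langle w_h,P_hz\rangle_{\GaI}\) together with the discrete equation \eqref{eq:auxiliary dp1} tested against \(P_hz\in V_\T\). This eliminates the high-frequency term \(a_0(w_h,P_hz)\) against the data pairing \((F,P_hz)+\langle g,P_hz\rangle_{\GaI}\), leaving
\[
\Lt{w_h}^2 = (F,P_hz-z) + \langle g,P_hz-z\rangle_{\GaI} + \mbi k\langle w_h,z-P_hz\rangle_{\GaI} + k^2\vep(\text{nonlinear corrections}) + k^2\big((\cdots)w_h + \vep\oneo\psi^2\overline{w_h},z-P_hz\big).
\]
Using the trace bound \(\LtD{w_h}{\GaI}\ls \Lt{w_h} + k^{-1}\MFg\) from Step~1 (together with \(k\vep\Linf{\psi}^2\le\theta_0\)), the interpolation estimates of Step~2, and \(k^3h_\T^{1+\alpha}\le C_0\), every term is bounded either by \(k^{-1}\MFg\cdot\Lt{w_h}\) or by something of the form \(k^3h_\T^{1+\alpha}\cdot\Lt{w_h}^2\) plus lower-order pieces.

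\textbf{Step 4 (absorption and conclusion).} For \(C_0\) sufficiently small, the \(\Lt{w_h}^2\) contributions from the approximation errors can be absorbed into the left-hand side, yielding \(k\Lt{w_h}\ls\MFg\). Plugging this into the real and imaginary parts from Step~1 gives \(\Lt{\nabla w_h}\ls\MFg\) and \(\LtD{w_h}{\GaI}\ls k^{-1/2}\MFg\), and thus \(\He{w_h}\eqsim\Lt{\nabla w_h}+k\Lt{w_h}\ls\MFg\).

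The main obstacle will be Step~3: carefully tracking the mixed terms that couple \(\psi^2\overline{w_h}\) (rather than \(|\psi|^2 w_h\)) to the conjugation in the dual problem, so that when one subtracts the discrete equation and the integration-by-parts identity the residual nonlinear pieces are structured as \(k^2\vep\big((\psi^2 w_\psi,\overline z)-(\psi^2\overline{w_\psi},z)\big)_{\Omz}\) types of differences (which cancel mild \(\Lt{w_h}\)-contributions). Ensuring that the total coefficient multiplying \(\Lt{w_h}^2\) is dominated by \(k^3h_\T^{1+\alpha}\) — so that the condition \(k^3 h_\T^{1+\alpha}\le C_0\) suffices for absorption — requires exploiting the smallness of \(k\vep\Linf{\psi}^2\le\theta_0\) at each occurrence of the nonlinearity without losing a factor of \(k\).
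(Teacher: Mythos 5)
Your proposal is correct and follows essentially the same route as the paper: test the discrete equation with $w_h$ to isolate the trace and gradient terms, reduce to an $L^2$ bound via an Aubin--Nitsche duality argument with the adjoint problem whose conjugate solves \eqref{eq:auxiliary cp}, insert the elliptic projection $P_hz$ to exploit Galerkin orthogonality, and absorb the discrete-approximation contributions using $k^3h_\T^{1+\alpha}\le C_0$. The only informality is that you summarize the nonlinear correction terms rather than writing them out, but your description of how they cancel and of how the trace bound from the imaginary part is re-used matches the paper's bookkeeping.
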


\bibliographystyle{siam}
\bibliography{ref}

\end{document}